\newtheorem{thm}{Theorem}[section]
\newtheorem{lm}{Lemma}[section]
\newtheorem{df}{Definition}[section]
\newtheorem{rmk}{Remark}[section]
\newtheorem{prop}{Proposition}[section]
\newtheorem{cor}{Corollary}[section]
\numberwithin{equation}{section}
\begin{document}
\title[Stochastic Parabolic Equations with Dynamic Boundary Conditions]{Null Controllability for Backward Stochastic Parabolic Convection-Diffusion Equations with Dynamic Boundary Conditions}

%    Remove any unused author tags.

%    author two information
\author{Mahmoud BAROUN $^1$, SAID BOULITE $^2$, ABDELLATIF ELGROU $^1$, AND LAHCEN MANIAR $^1$}
\address{$^1$ Cadi Ayyad University, Faculty of Sciences Semlalia, LMDP, UMMISCO (IRD-UPMC), P.B. 2390, Marrakesh, Morocco}
%\curraddr{}
\email{m.baroun@uca.ac.ma, abdellatif.elgrou@ced.uca.ma, maniar@uca.ma}
\address{$^2$ Cadi Ayyad University, National School of Applied Sciences, LMDP, UMMISCO (IRD-UPMC), P.B. 575, Marrakesh,
Morocco}

%\curraddr{}
\email[Corresponding author]{Corresponding author :s.boulite@uca.ma}
\thanks{}

\subjclass[2000]{Primary: 93B05; Secondary: 93B07, 93E20, 60H15.}

\keywords{Forward and backward linear stochastic parabolic equations, dynamic boundary conditions, convection terms, Carleman estimate, observability, null controllability.}

\date{}

\dedicatory{}

\begin{abstract}
This paper is concerned with the null controllability for linear backward stochastic parabolic equations with dynamic boundary conditions and convection terms. Using the classical duality argument, the null controllability is obtained via an appropriate observability inequality of the corresponding adjoint forward stochastic parabolic equation. To prove this observability inequality, we develop a new global Carleman estimate for forward stochastic parabolic equations that contains some first-order terms in the weak divergence form. Our Carleman estimate is established by applying the duality technique. Moreover, an estimate of the null-control cost is provided.
\end{abstract}
\maketitle
\section{Introduction and main results}
The controllability of partial differential equations (PDEs for short) has been discussed in many works during the last years including parabolic equations, see, e.g., \cite{fernandez2006null,fernandez2006global,fursikov1999optimal,fursikov1996controllability,GloLions,imanuvilov2003carleman,khoutaibi2020null,lebeau1995controle,lions1972some,maniar2017null,russell73,zuazua2007controllability}, and the field is still highly active in the research. In the literature, we also find numerous works and results on the controllability for stochastic partial differential equations (SPDEs for short), particularly stochastic parabolic equations, see for instance \cite{barbu2003carleman,elgrouDBC,elgrouDBC1d,liu2014global,liu2019carleman,lu2011some,lu2021mathematical,tang2009null,yan2018carleman,zhang2008unique}, and the references cited therein. The controllability results for some classes of stochastic parabolic equations are studied with different kinds of boundary conditions, including Dirichlet, Neumann, and Fourier boundary conditions, see \cite{Preprintelgrou23,lu2021mathematical,tang2009null,yan2018carleman}, and with dynamic boundary conditions without the presence of convection terms, see \cite{elgrouDBC,elgrouDBC1d}. Moreover, one can also find some interesting controllability results for degenerate, fourth order, semilinear, coupled, and singular stochastic parabolic equations, see, e.g., \cite{fadili,liu2019carleman,fourthorder,san23,yansing}.

By the duality argument, one can show that the null controllability problem of linear PDEs or SPDEs is equivalent to an observability inequality for the corresponding adjoint equation. To prove this observability inequality, one of the key tools is Carleman estimates which are some energy estimates with exponential weights and some parameters that can be taken large enough to absorb some undesired and lower-order terms. These Carleman estimates were introduced by T. Carleman in 1939 to prove the uniqueness of solutions to second-order elliptic PDEs with two variables, for that see \cite{Carl39}. Now, such estimates become very useful for studying various topics in applied mathematics such as controllability (e.g., null controllability, exact controllability, approximate controllability, and so on), observability, stabilization, inverse problems, and the unique continuation property, for both PDEs and SPDEs, see, for instance, \cite{lu2012car,lu2021mathematical,zhang2008unique, zuazua2007controllability}. 

In this paper, we prove a new global Carleman estimate for forward stochastic parabolic equations containing some weak divergence terms. This estimate will prove the null controllability for general backward stochastic parabolic equations with dynamic boundary conditions and some convection terms. The approach used to establish our Carleman estimate is based on the duality technique, see \cite{imanuvilov2003carleman,liu2014global} for more details.

Let $T>0$, $G\subset\mathbb{R}^N$ be a given nonempty bounded domain with a $C^2$ boundary $\Gamma=\partial G$, $N\geq2$, let $G_0\Subset G$ be a given non-empty open subset which is strictly contained in $G$ (i.e., $\overline{G_0}\subset G$) and  $\overline{G}$ denotes the closure of $G$. Put $Q=(0,T)\times G, \,\,\, \Sigma=(0,T)\times\Gamma,\,\,\, \text{and}\,\,\, Q_0=(0,T)\times G_0$. Also, we indicate by $\mathbbm{1}_{G_0}$ the characteristic function of $G_0$.

Let $(\Omega,\mathcal{F},\{\mathcal{F}_t\}_{t\geq0},\mathbb{P})$ be a fixed complete filtered probability space on which a one-dimensional standard Brownian motion $W(\cdot)$ is defined such that $\{\mathcal{F}_t\}_{t\geq0}$ is the natural filtration generated by $W(\cdot)$ and augmented by all the $\mathbb{P}$-null sets in $\mathcal{F}$. Let $\mathcal{X}$ be a Banach space, let $C([0,T];\mathcal{X})$ be the Banach space of all $\mathcal{X}$-valued continuous functions defined on $[0,T]$; and for some sub-sigma algebra $\mathcal{G}\subset\mathcal{F}$, we denote by $L^2_{\mathcal{G}}(\Omega;\mathcal{X})$ the Banach space of all $\mathcal{X}$-valued $\mathcal{G}$-measurable random variables $X$ such that $\mathbb{E}\big(\vert X\vert_\mathcal{X}^2\big)<\infty$, with the canonical norm; by $L^2_\mathcal{F}(0,T;\mathcal{X})$ the Banach space consisting of all $\mathcal{X}$-valued $\{\mathcal{F}_t\}_{t\geq0}$-adapted processes $X(\cdot)$ such that $\mathbb{E}\big(\vert X(\cdot)\vert^2_{L^2(0,T;\mathcal{X})}\big)<\infty$, with the canonical norm; by $L^\infty_\mathcal{F}(0,T;\mathcal{X})$ the Banach space consisting of all $\mathcal{X}$-valued $\{\mathcal{F}_t\}_{t\geq0}$-adapted essentially bounded processes with its norm is denoted by $|\cdot|_\infty$; and by $L^2_\mathcal{F}(\Omega;C([0,T];\mathcal{X}))$ the Banach space consisting of all $\mathcal{X}$-valued $\{\mathcal{F}_t\}_{t\geq0}$-adapted continuous processes $X(\cdot)$ such that $\mathbb{E}\big(\vert X(\cdot)\vert^2_{C([0,T];\mathcal{X})}\big)<\infty$, with the canonical norm. Similarly, one can define $L^\infty_\mathcal{F}(\Omega;C^m([0,T];\mathcal{X}))$ for any positive integer $m$.
	
Consider the following Hilbert space
$$\mathbb{L}^2:=L^2(G,dx)\times L^2(\Gamma,d\sigma),$$
which $dx$ (resp., $d\sigma$ ) denoted the Lebesgue measure (resp., surface measure) on $G$ (resp., $\Gamma$), equipped with the following inner product 
$$\langle(y,y_\Gamma),(z,z_\Gamma)\rangle_{\mathbb{L}^2} = ( y,z)_{L^2(G)} + (y_\Gamma,z_\Gamma)_{L^2(\Gamma)}.$$
The main purpose of this paper is to establish the null controllability of the following backward stochastic parabolic equation with dynamic boundary conditions:
\begin{equation}\label{1.1}
{\small\begin{cases}
	\begin{array}{ll}
				dy + \text{div}(A\nabla y) \,dt = (a_1 y+a_2 Y +B\cdot\nabla y +\mathbbm{1}_{G_0}u) \,dt + Y \,dW(t) &\textnormal{in}\,\,Q,\\
				dy_\Gamma+\text{div}_\Gamma(A_\Gamma\nabla_\Gamma y_\Gamma) \,dt-\partial_\nu^A y \,dt = (b_1y_\Gamma+b_2\widetilde{Y}+B_\Gamma\cdot\nabla_\Gamma y_\Gamma)\,dt+\widetilde{Y} \,dW(t) &\textnormal{on}\,\,\Sigma,\\
				y_\Gamma(t,x)=y\vert_\Gamma(t,x) &\textnormal{on}\,\,\Sigma,\\
				(y,y_\Gamma)\vert_{t=T}=(y_T,y_{\Gamma,T}) &\textnormal{in}\,\,G\times\Gamma,
			\end{array}
		\end{cases}}
\end{equation}
where $a_1, a_2\in L_\mathcal{F}^\infty(0,T;L^\infty(G))$, $B\in L_\mathcal{F}^\infty(0,T;L^\infty(G;\mathbb{R}^N))$, $b_1, b_2\in L_\mathcal{F}^\infty(0,T;L^\infty(\Gamma))$, $B_\Gamma\in L_\mathcal{F}^\infty(0,T;L^\infty(\Gamma;\mathbb{R}^N))$, $(y_T,y_{\Gamma,T})\in L^2_{\mathcal{F}_T}(\Omega;\mathbb{L}^2)$ is the terminal state, $u \in L^2_\mathcal{F}(0,T;L^2(G_0))$ is the control process and $(y,y_\Gamma,Y,\widetilde{Y})$ is the state variable.
	
We assume that the diffusion matrices $A=(a_{i,j})_{i,j=1,2,...,N}$ and $A_\Gamma=(b_{i,j})_{i,j=1,2,...,N}$ satisfy the following assumptions:
\begin{enumerate}
    \item $A\in L^\infty_\mathcal{F}(\Omega;C^1([0,T];W^{1,\infty}(\overline{G};\mathbb{R}^{N\times N})))$, and $a_{i,j}=a_{j,i}$ \,\,for all \,\,$i,j=1,2,...,N$.
    \item $A_\Gamma \in L^\infty_\mathcal{F}(\Omega;C^1([0,T];W^{1,\infty}(\Gamma;\mathbb{R}^{N\times N})))$, and $b_{i,j}=b_{j,i}$ \,\,for all \,\,$i,j=1,2,...,N$.
    \item There exists a constant $\beta>0$ such that
    $$\langle A(\omega,t,x)\xi,\xi\rangle_{\mathbb{R}^N} \geq \beta\vert\xi\vert^2,\qquad (\omega,t,x,\xi)\in\Omega\times \overline{Q}\times\mathbb{R}^N,$$
    $$\langle A_\Gamma(\omega,t,x)\xi,\xi\rangle_\Gamma \geq \beta\vert\xi\vert^2,\qquad (\omega,t,x,\xi)\in\Omega\times\Sigma\times\mathbb{R}^N,$$
\end{enumerate}
where $\langle\cdot,\cdot\rangle_{\mathbb{R}^N}$ is the Euclidean inner product on $\mathbb{R}^N$ and $\langle\cdot,\cdot\rangle_\Gamma$ is the Riemannian inner product on $\Gamma$, which both will be denoted by $\textbf{a}\cdot \textbf{b}=\langle \textbf{a},\textbf{b}\rangle_{\mathbb{R}^N}=\langle \textbf{a},\textbf{b}\rangle_{\Gamma}$. Here $y\vert_\Gamma$ denotes the trace of $y$, $\partial^A_\nu y = (A\nabla y \,\cdot\, \nu)\vert_\Gamma$ designates the co-normal derivative w.r.t $A$ where $\nu$ is the outer unit normal vector at $\Gamma$. This co-normal derivative couples the bulk and surface equations. The term $\text{div}$ (resp., $\text{div}_\Gamma$) denotes the divergence (resp., tangential divergence) operator w.r.t to the space variable in $G$ (resp., $\Gamma$). Note that when $A=I_N$ the identity matrix, the co-normal derivative becomes only the standard normal derivative $\partial_\nu y=(\nabla y\cdot\nu)\vert_\Gamma$ and $\text{div}(A\nabla y)=\Delta y$ the Laplacian operator of $y$. The term $\text{div}_\Gamma(A_\Gamma\nabla_\Gamma y_\Gamma)$ represents surface diffusion effects on the boundary $\Gamma$ with $\nabla_\Gamma y_\Gamma:=\nabla y - (\partial_\nu y)\nu,$ (where $y$ is an extension of $y_\Gamma$ up to an open neighborhood of $\Gamma$) is called the tangential gradient of $y_\Gamma$ on $\Gamma$. See also that if $A_\Gamma=I_N$, we have $\text{div}_\Gamma(A_\Gamma\nabla_\Gamma y_\Gamma)=\Delta_\Gamma y_\Gamma$, which is the Laplace-Beltrami operator of $y_\Gamma$ on $\Gamma$. \\

Dynamic boundary conditions, also known as generalized Wentzell boundary conditions have been considered for many equations in mathematical physics and are motivated by various models such as chemical engineering, population dynamics, special flows in hydrodynamics, and so on. For more details and the physical meaning of this kind of boundary conditions, we refer the readers to \cite{cocang2008,gal2015,Golde}. \\

System \eqref{1.1} (with $u\equiv0$) combines the diffusion and convection in the bulk and surface equations. It describes many diffusion phenomena such as thermal processes with a particular speed. Such systems subject to stochastic disturbances also take into account all the small independent changes during the heat process. In this paper, we are interested in the study of the null controllability of \eqref{1.1}, so here the role of the control force $u$ comes into play to guide the heat flow so that it is zero at the final time.\\

Throughout this paper, we denote by $C$ a positive constant depending only on $G$, $G_0$, $A$, and $A_\Gamma$ which may change from one place to another. We also define the following  Hilbert space
$$\mathbb{H}^1=\{(y,y_\Gamma)\in H^1(G)\times H^1(\Gamma)\,:\, y\vert_\Gamma = y_\Gamma\},$$
equipped with the usual inner product
$$\langle(y,y_\Gamma),(z,z_\Gamma)\rangle_{\mathbb{H}^1} = \langle y,z\rangle_{H^1(G)} + \langle y_\Gamma,z_\Gamma\rangle_{H^1(\Gamma)},$$
with $H^1$ are the standard Sobolev spaces endowed with the canonical inner product.

In Section \ref{sec2}, we show that \eqref{1.1} is well-posed i.e., for any $(y_T,y_{\Gamma,T})\in L^2_{\mathcal{F}_T}(\Omega;\mathbb{L}^2)$ and $u \in L^2_\mathcal{F}(0,T;L^2(G_0))$, the equation \eqref{1.1} has a unique weak solution
\begin{align*}
(y,y_\Gamma,Y,\widetilde{Y})\in L^2_\mathcal{F}(0,T;\mathbb{H}^1)\times L^2_\mathcal{F}(0,T;\mathbb{L}^2).
\end{align*}
Moreover, there exists a constant $\mathcal{C}>0$ such that
\begin{align*}
&\,\vert(y,y_\Gamma)\vert_{L^2_\mathcal{F}(0,T;\mathbb{H}^1)}+\vert (Y,\widetilde{Y}) \vert_{L^2_\mathcal{F}(0,T;\mathbb{L}^2)}\\
&\leq \mathcal{C} \big(\vert(y_T,y_{\Gamma,T})\vert_{L^2_{\mathcal{F}_T}(\Omega;\mathbb{L}^2)} + \vert u\vert_{L^2_\mathcal{F}(0,T;L^2(G_0))}\big).
\end{align*}

The main result of this paper is the following null controllability of \eqref{1.1}.
\begin{thm}\label{thm1.1}
For any given $T>0$, $G_0\Subset G$ a nonempty open subset of $G$ and for all $(y_T,y_{\Gamma,T})\in L^2_{\mathcal{F}_T}(\Omega;\mathbb{L}^2)$, there exists a control $\hat{u}\in L^2_\mathcal{F}(0,T;L^2(G_0))$ such that the corresponding solution $(\hat{y},\hat{y}_\Gamma,\hat{Y},\hat{\widetilde{Y}})$ of system \eqref{1.1} satisfies
$$(\hat{y}(0,\cdot), \hat{y}_\Gamma(0,\cdot)) = (0,0)\,\,\,\,\,\mathbb{P}\textnormal{-a.s}.$$
Moreover, the control $\hat{u}$ can be chosen such that
\begin{align}\label{1.21203}
\vert \hat{u}\vert^2_{L^2_\mathcal{F}(0,T;L^2(G))}\leq e^{CK}\vert(y_T,y_{\Gamma,T})\vert^2_{L^2_{\mathcal{F}_T}(\Omega;\mathbb{L}^2)},
\end{align}
where $K$ has the following form
$$K\equiv1+\frac{1}{T}+\vert a_1\vert_\infty^{2/3}+T\vert a_1\vert_\infty+\vert b_1\vert_\infty^{2/3}+T\vert b_1\vert_\infty+\big(1+T\big)\big(\vert a_2\vert_\infty^2+\vert B\vert_\infty^2+\vert b_2\vert_\infty^2+\vert B_\Gamma\vert_\infty^2\big).$$
\end{thm}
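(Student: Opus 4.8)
The plan is to use the by-now classical duality argument: null controllability of the backward system \eqref{1.1} is equivalent to an observability inequality for the corresponding \emph{forward} adjoint stochastic parabolic equation with dynamic boundary conditions, and that observability inequality will in turn be derived from a suitable global Carleman estimate. First I would write down the adjoint system: testing \eqref{1.1} against a forward process $(z,z_\Gamma,Z,\widetilde Z)$ and using the It\^o formula together with the coupling $y_\Gamma = y|_\Gamma$ and the co-normal derivative term, one obtains a forward system of the form $dz - \mathrm{div}(A\nabla z)\,dt = (-a_1 z - \mathrm{div}(Bz) + \cdots)\,dt + Z\,dW$ in $Q$, together with a dynamic boundary equation $dz_\Gamma - \mathrm{div}_\Gamma(A_\Gamma\nabla_\Gamma z_\Gamma)\,dt + \partial_\nu^A z\,dt = (-b_1 z_\Gamma - \mathrm{div}_\Gamma(B_\Gamma z_\Gamma) + a_2 z|_\Gamma \cdot(\text{trace terms}) + \cdots)\,dt + \widetilde Z\,dW$ on $\Sigma$, and initial data $(z_0,z_{\Gamma,0})$ at $t=0$. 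The crucial point, and the reason the paper announces a \emph{new} Carleman estimate, is that the convection terms $B\cdot\nabla y$ and $B_\Gamma\cdot\nabla_\Gamma y_\Gamma$ in the backward equation become first-order terms \emph{in weak divergence form}, $\mathrm{div}(Bz)$ and $\mathrm{div}_\Gamma(B_\Gamma z_\Gamma)$, in the adjoint equation — terms that cannot be handled by a pointwise Carleman identity in the usual way because $z$ need not be smooth enough to expand them.

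The key step is therefore the global Carleman estimate for the forward system with these divergence-form source terms, which the paper proposes to prove by a duality technique in the spirit of Imanuvilov–Yamamoto and Liu. Concretely, I would introduce the standard weight functions: pick $\psi \in C^2(\overline G)$ with $\psi>0$ in $G$, $\psi=0$ on $\Gamma$, $|\nabla\psi|>0$ on $\overline{G\setminus G_0}$ (and with appropriate matching conditions for the tangential operator on $\Gamma$ so that the boundary terms have a sign), then set $\varphi = e^{\lambda\psi}$, $\theta = e^{\ell}$ with $\ell = \lambda(e^{2\lambda} - \varphi)/(t(T-t))$, or the analogous forms used in the earlier dynamic-boundary papers \cite{elgrouDBC,elgrouDBC1d}. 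For the divergence-form terms one does not estimate the original equation directly; instead, one observes that $\int_Q \theta^2 s\varphi\, |z|^2$-type quantities can be bounded by duality: for a given right-hand side $f = f_0 + \mathrm{div}\,F$ (and similarly on the boundary), one writes the solution via the adjoint of the Carleman-weighted operator and invokes the already-known Carleman estimate for the \emph{non-divergence} case (from \cite{elgrouDBC}) applied to a dual problem with smooth data, then transfers the estimate back. This yields an inequality of the form
\begin{align*}
&\mathbb{E}\int_Q \theta^2\big(s^3\lambda^4\varphi^3 |z|^2 + s\lambda\varphi |\nabla z|^2\big)\,dx\,dt + (\text{boundary analogue on }\Sigma)\\
&\quad \leq C\,\mathbb{E}\int_{Q_0} \theta^2 s^3\lambda^4\varphi^3 |z|^2\,dx\,dt + (\text{lower-order terms absorbed for }s,\lambda\text{ large}).
\end{align*}

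From the Carleman estimate, the route to Theorem \ref{thm1.1} is routine but must be done carefully to extract the explicit cost exponent $K$. I would: (i) choose $\lambda$ fixed large and then $s$ of the form $s = s_0(1 + 1/T + 1/T^2 + T^2(|a_1|_\infty^{2/3} + \cdots))$ to absorb the zeroth- and first-order potentials $a_1, a_2, B, b_1, b_2, B_\Gamma$ into the principal terms — the powers $2/3$ on $|a_1|_\infty, |b_1|_\infty$ and the squares on $|a_2|_\infty$ etc.\ in the definition of $K$ come exactly from this absorption (a $W^{-1}$-type term costs a square, a zeroth-order term in the scalar equation costs a $2/3$ power against $s^3\varphi^3$). (ii) Use the energy estimate / well-posedness bound from Section \ref{sec2} together with the standard dissipation argument to bound the weighted solution on a subinterval $[T/4,3T/4]$ by the observation region, turning the Carleman estimate into the observability inequality $|(z(0),z_\Gamma(0))|^2_{\mathbb{L}^2} \le e^{CK}\,\mathbb{E}\int_0^T\!\!\int_{G_0}|z|^2\,dx\,dt$. (iii) Apply the duality/minimization argument (minimize the functional $J(z_0,z_{\Gamma,0}) = \tfrac12\mathbb{E}\int_{Q_0}|z|^2 + \langle (z(T),z_\Gamma(T)),(y_T,y_{\Gamma,T})\rangle$ over $L^2_{\mathcal F_T}(\Omega;\mathbb{L}^2)$), whose unique minimizer yields the control $\hat u = \mathbbm{1}_{G_0} z$ driving $(y(0),y_\Gamma(0))$ to zero, with $|\hat u|^2 \le e^{CK}|(y_T,y_{\Gamma,T})|^2$ as in \eqref{1.21203}. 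The main obstacle is unquestionably step (ii) of the previous paragraph — making the duality argument for the Carleman estimate rigorous in the presence of the dynamic boundary condition, since the co-normal derivative $\partial_\nu^A z$ couples bulk and surface and one must check that the dual problem is well-posed and that the boundary integration-by-parts produces terms of the right sign; the bookkeeping of how $\|B\|_\infty,\|B_\Gamma\|_\infty$ enter the cost (quadratically, via Young's inequality on the cross terms $s\lambda\varphi\,\mathrm{div}(Bz)\cdot(\text{weighted }z)$) is the delicate quantitative part.
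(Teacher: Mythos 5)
Your proposal follows the same route as the paper: reduce null controllability of \eqref{1.1} by duality to an observability inequality for the forward adjoint system \eqref{1.012}, prove a new Carleman estimate for forward equations with sources in weak divergence form by a duality/controllability argument built on the known non-divergence Carleman estimate with dynamic boundary conditions, and then construct the control variationally, tracking the constants to get the exponent $K$. This is exactly the paper's architecture (Sections 3 and 4), including the absorption of $a_1,a_2,B,b_1,b_2,B_\Gamma$ through the choice of the large parameter, which produces the powers $|a_1|_\infty^{2/3}$, $|a_2|_\infty^{2}$, $|B|_\infty^{2}$, etc.

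There is, however, one concrete error in your step (ii): you state the observability inequality as $\vert(z(0,\cdot),z_\Gamma(0,\cdot))\vert^2_{\mathbb{L}^2}\le e^{CK}\,\mathbb{E}\int_{Q_0}z^2\,dxdt$. Since the adjoint system is \emph{forward} in time with data prescribed at $t=0$, this is the wrong endpoint: an estimate of the initial datum of a forward parabolic equation by an interior observation is false in general (for the heat semigroup, take $z_0$ a high-frequency eigenfunction; the observation term tends to zero while $\vert z_0\vert_{L^2}=1$), and it is not what null controllability of the backward equation \eqref{1.1} at $t=0$ requires. The correct inequality — the paper's Theorem \ref{thm1.2}, estimate \eqref{1.3} — bounds the \emph{terminal} value $\vert z(T,\cdot)\vert^2_{L^2_{\mathcal{F}_T}(\Omega;L^2(G))}+\vert z_\Gamma(T,\cdot)\vert^2_{L^2_{\mathcal{F}_T}(\Omega;L^2(\Gamma))}$ by $e^{CK}\mathbb{E}\int_{Q_0}z^2\,dxdt$; it is obtained from the Carleman estimate restricted to $(T/4,3T/4)$ combined with an It\^o/Gronwall energy estimate propagating from $t$ to $T$ (this propagation is precisely where the terms $T\vert a_1\vert_\infty$, $T\vert b_1\vert_\infty$ and the factor $(1+T)$ in $K$ arise). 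Note that your own step (iii) already presupposes this version: coercivity of the dual functional $J(z_0,z_{\Gamma,0})=\tfrac12\mathbb{E}\int_{Q_0}z^2+\mathbb{E}\langle(z(T),z_\Gamma(T)),(y_T,y_{\Gamma,T})\rangle_{\mathbb{L}^2}$ needs control of $(z(T),z_\Gamma(T))$, not of the initial data. With that correction the argument closes as in the paper, up to a standard variant in the last step: the paper minimizes the penalized functional $J_\varepsilon(u)=\tfrac12\mathbb{E}\int_{Q_0}u^2\,dxdt+\tfrac{1}{2\varepsilon}\mathbb{E}\int_G y^2(0)\,dx+\tfrac{1}{2\varepsilon}\mathbb{E}\int_\Gamma y_\Gamma^2(0)\,d\sigma$ over controls, characterizes the minimizer as $u_\varepsilon=-\mathbbm{1}_{G_0}z_\varepsilon$ via the adjoint forward system, uses \eqref{1.3} to obtain the uniform bound \eqref{1.21203}, and passes to the weak limit $\varepsilon\to0$; your HUM-type minimization over adjoint initial data is an equivalent device. (A small further slip: the convection term dualizes to $+\mathrm{div}(zB)$, together with the boundary correction $-z_\Gamma B\cdot\nu$, as in \eqref{1.012}, and the adjoint noise coefficients are the prescribed functions $-a_2z$, $-b_2z_\Gamma$ rather than free processes $Z$, $\widetilde Z$.)
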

\begin{rmk}
The null controllability of \eqref{1.1} with $B\equiv B_\Gamma\equiv 0$ and $A\equiv A_\Gamma\equiv I_N$ is already established in \cite[Theorem 5.1]{elgrouDBC}.
\end{rmk}

By the duality method, the null controllability of \eqref{1.1} can be reduced to the observability inequality of the following forward stochastic parabolic equation:
\begin{equation}\label{1.012}
{\small\begin{cases}
			\begin{array}{ll}
				dz - \text{div}(A\nabla z) \,dt = (-a_1 z+\text{div}(zB)) \,dt - a_2z \,dW(t) &\textnormal{in}\,\,Q,\\
				dz_\Gamma-\text{div}_\Gamma(A_\Gamma \nabla_\Gamma z_\Gamma) \,dt+\partial^A_\nu z \,dt = (-b_1z_\Gamma-z_\Gamma B\cdot \nu+\text{div}_\Gamma(z_\Gamma B_\Gamma))\,dt\\
    \hspace{5cm}\;\; -b_2z_\Gamma \,dW(t) &\textnormal{on}\,\,\Sigma,\\
				z_\Gamma(t,x)=z\vert_\Gamma(t,x) &\textnormal{on}\,\,\Sigma,\\
				(z,z_\Gamma)\vert_{t=0}=(z_0,z_{\Gamma,0}) &\textnormal{in}\,\,G\times\Gamma,
			\end{array}
		\end{cases}}
\end{equation}
where $(z_0,z_{\Gamma,0})\in L^2_{\mathcal{F}_0}(\Omega;\mathbb{L}^2)$ is the initial state.

In Section \ref{sec2}, we prove that \eqref{1.012} is well-posed i.e., for any $(z_0,z_{\Gamma,0})\in L^2_{\mathcal{F}_0}(\Omega;\mathbb{L}^2)$, the equation \eqref{1.012} has a unique weak solution $(z,z_\Gamma)\in L^2_\mathcal{F}(0,T;\mathbb{H}^1)$. Furthermore,
\begin{equation*}
\vert(z,z_\Gamma)\vert_{L^2_\mathcal{F}(0,T;\mathbb{H}^1)}\leq \mathcal{C}\, \vert(z_0,z_{\Gamma,0})\vert_{L^2_{\mathcal{F}_0}(\Omega;\mathbb{L}^2)}.
\end{equation*}

To establish the null controllability result of system \eqref{1.1}, we will prove the following appropriate observability inequality of \eqref{1.012}.
\begin{thm}\label{thm1.2}
For all $(z_0,z_{\Gamma,0})\in L^2_{\mathcal{F}_0}(\Omega;\mathbb{L}^2)$, the corresponding solution of \eqref{1.012} satisfies that
\begin{equation}\label{1.3}
    \vert z(T,\cdot)\vert^2_{L^2_{\mathcal{F}_T}(\Omega;L^2(G))}+\vert z_\Gamma(T,\cdot)\vert^2_{L^2_{\mathcal{F}_T}(\Omega;L^2(\Gamma))} \leq e^{CK}\,\mathbb{E}\int_{Q_0} z^2\,dxdt,
\end{equation}
where $K$ is as in Theorem \ref{thm1.1} and $C$ is the same positive constant appearing in \eqref{1.21203}.
\end{thm}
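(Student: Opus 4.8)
The plan is to derive the observability inequality \eqref{1.3} from a suitable global Carleman estimate for the forward equation \eqref{1.012}. The principal difficulty, and the reason the paper announces a ``new'' Carleman estimate, is that \eqref{1.012} contains the first-order terms $\mathrm{div}(zB)$ in the bulk and $\mathrm{div}_\Gamma(z_\Gamma B_\Gamma)$ on the boundary \emph{in weak divergence form}: these cannot be treated by the usual pointwise Carleman identity because $zB$ and $z_\Gamma B_\Gamma$ need not be differentiable. Accordingly, I would first establish the Carleman estimate via the duality technique of \cite{imanuvilov2003carleman,liu2014global}: introduce the standard weight functions $\theta=e^{\ell}$ with $\ell=\lambda\varphi$, $\varphi(t,x)=\frac{e^{\mu\psi(x)}-e^{2\mu|\psi|_\infty}}{t(T-t)}$, where $\psi\in C^2(\overline G)$ satisfies $\psi>0$ in $G$, $\psi=0$ on $\Gamma$ (or the appropriate variant for the dynamic-boundary setting, as in \cite{elgrouDBC}), and $|\nabla\psi|>0$ outside $G_0$. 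Rather than plugging $z$ into a pointwise identity, one writes the divergence terms as the adjoint of a gradient acting on a test function, proves a Carleman estimate for the adjoint backward problem with source terms in $L^2$, and dualizes to obtain an estimate of the form
\begin{align}\label{carleman-plan}
&\mathbb{E}\int_Q \theta^2\Big(\lambda\mu^2\xi|\nabla z|^2+\lambda^3\mu^4\xi^3 z^2\Big)dxdt
+\mathbb{E}\int_\Sigma \theta^2\Big(\lambda\mu\xi|\nabla_\Gamma z_\Gamma|^2+\lambda^3\mu^3\xi^3 z_\Gamma^2\Big)d\sigma dt\nonumber\\
&\quad\leq C\,\mathbb{E}\int_{Q_0}\theta^2\lambda^3\mu^4\xi^3 z^2\,dxdt,
\end{align}
valid for $\lambda\geq C(T+T^2)(1+|a_1|_\infty^{2/3}+|b_1|_\infty^{2/3}+\dots)$ and $\mu\geq C$, where $\xi=\big(t(T-t)\big)^{-1}$. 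The zeroth-order potentials $a_1,b_1$ are absorbed by the $\lambda^3\mu^4\xi^3$ and $\lambda^3\mu^3\xi^3$ terms for $\lambda$ large; the stochastic coefficients $a_2,b_2$ and the convection amplitudes $|B|_\infty,|B_\Gamma|_\infty$ are absorbed into the gradient terms on the left and produce the $(1+T)(|a_2|_\infty^2+|B|_\infty^2+\dots)$ contribution to $K$, exactly matching the form of $K$ in Theorem \ref{thm1.1}.

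Granting \eqref{carleman-plan}, the passage to \eqref{1.3} is the classical energy argument. First I would localize in time: since $\theta^2\xi^3$ is bounded below by a positive constant $c(T,\lambda,\mu)$ on $[T/4,3T/4]$ and bounded above there as well, \eqref{carleman-plan} yields
\begin{equation*}
\mathbb{E}\int_{T/4}^{3T/4}\!\!\int_G z^2\,dxdt+\mathbb{E}\int_{T/4}^{3T/4}\!\!\int_\Gamma z_\Gamma^2\,d\sigma dt\;\leq\; e^{CK}\,\mathbb{E}\int_{Q_0}z^2\,dxdt,
\end{equation*}
after bounding $\theta^{-2}$ on $Q_0$ over $[T/4,3T/4]$ and fixing $\mu$, then $\lambda\sim K$, so that $e^{C\lambda}\leq e^{CK}$. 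Second, I would run a standard energy (dissipation) estimate for \eqref{1.012}: applying Itô's formula to $|(z(t),z_\Gamma(t))|_{\mathbb{L}^2}^2$, using the ellipticity assumption (3), the coupling through $\partial_\nu^A z$, and Gronwall's inequality, one shows the map $t\mapsto \mathbb{E}|(z(t),z_\Gamma(t))|_{\mathbb{L}^2}^2$ satisfies a differential inequality whose constant is controlled by $|a_1|_\infty,|b_1|_\infty,|a_2|_\infty^2,|b_2|_\infty^2,|B|_\infty^2,|B_\Gamma|_\infty^2$ — here the divergence convection terms are integrated by parts against $z$, $z_\Gamma$ and Young's inequality moves half of $|\nabla z|^2$, $|\nabla_\Gamma z_\Gamma|^2$ to the dissipation side. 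This gives, for any $0\le s\le t\le T$,
\begin{equation*}
\mathbb{E}|(z(t),z_\Gamma(t))|_{\mathbb{L}^2}^2\;\leq\; e^{CK}\,\mathbb{E}|(z(s),z_\Gamma(s))|_{\mathbb{L}^2}^2 .
\end{equation*}

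Finally, I would combine the two: integrating the last inequality in $s$ over $[T/4,3T/4]$ and taking $t=T$ bounds $\mathbb{E}|(z(T),z_\Gamma(T))|_{\mathbb{L}^2}^2$ by $e^{CK}$ times the localized-in-time space integral, which the Carleman-localization step already controls by $e^{CK}\mathbb{E}\int_{Q_0}z^2\,dxdt$; multiplying the constants (all of the form $e^{CK}$) gives \eqref{1.3}. Care must be taken that every exponential prefactor generated — from absorbing $a_1,b_1$ (which demands $\lambda\gtrsim |a_1|_\infty^{2/3}+|b_1|_\infty^{2/3}+T|a_1|_\infty+T|b_1|_\infty$), from the $\theta^{\pm2}$ bounds (which contribute the $1+1/T$ and $T|a_1|_\infty$-type terms), and from the energy Gronwall constant — assembles into precisely the stated $K$; tracking this dependence explicitly is what yields the cost estimate \eqref{1.21203} in Theorem \ref{thm1.1} through the duality argument. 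The main obstacle remains the first step: proving \eqref{carleman-plan} with the divergence-form first-order terms, for which the duality trick (estimating a dual problem with $H^{-1}$-type data and transposing) is essential, since a direct pointwise weighted-identity approach fails for nonsmooth $B$, $B_\Gamma$; the dynamic boundary condition additionally forces one to carry the boundary Carleman terms $\lambda\mu\xi|\nabla_\Gamma z_\Gamma|^2+\lambda^3\mu^3\xi^3 z_\Gamma^2$ through the duality and to handle the coupling term $\partial_\nu^A z$ and the extra boundary convection term $z_\Gamma B\cdot\nu$ consistently with the bulk estimate.
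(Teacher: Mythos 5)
Your proposal follows essentially the same route as the paper: a global Carleman estimate for the forward equation with weak divergence-form source terms proved by the duality technique (the paper implements this via a weighted null-controllability result for an auxiliary controlled backward equation, Proposition \ref{prop3.1}, obtained by penalized HUM from the known Carleman estimate of Lemma \ref{lm3.2}, and then an It\^o-formula transposition), followed by absorption of the coefficients $a_1,a_2,B,b_1,b_2,B_\Gamma$ for $\lambda\geq\lambda_1$, time localization on $(T/4,3T/4)$ with explicit bounds on the weights, and a standard It\^o--Gronwall energy estimate propagating $\mathbb{E}\vert(z(T),z_\Gamma(T))\vert^2_{\mathbb{L}^2}$ backward, exactly as in the paper's proof of Theorem \ref{thm1.2}. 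The constant tracking you describe also assembles into the stated $K$ in the same way (the $\vert a_1\vert_\infty^{2/3}$-type terms from the choice of $\lambda_1$ and the weight bounds, the $T\vert a_1\vert_\infty$-type terms from the Gronwall constant), so the plan is correct and matches the paper's argument.
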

\begin{rmk}
Note that in Theorem \ref{thm1.2}, we give an explicit form of the observability constant appearing in \eqref{1.3} which has the form $Ce^{CT^{-1}}$ when $T$ is small.
\end{rmk}
\begin{rmk}
In this paper, we prove the observability inequality \eqref{1.3} by developing a new global Carleman estimate for equation \eqref{1.012}. Moreover, applying such a Carleman estimate, it is easy to show the unique continuation property for solutions of \eqref{1.012} i.e.,
$$z=0\,\,\textnormal{in}\,\,Q_0,\,\,\,\mathbb{P}\textnormal{-a.s}.\;\;\Longrightarrow\;\; (z_0,z_{\Gamma,0})=(0,0)\,\,\,\,\,\mathbb{P}\textnormal{-a.s.}$$
As well known, the above property can be used to establish the approximate controllability result for equation \eqref{1.1}, which is formulated as follows: For any final state $(y_T,y_{\Gamma,T})\in L^2_{\mathcal{F}_T}(\Omega;\mathbb{L}^2)$ and any desired state $(y_d,y_{\Gamma,d})\in L^2_{\mathcal{F}_0}(\Omega;\mathbb{L}^2)$, and all $\varepsilon>0$, there exists a control function $u\in L^2_\mathcal{F}(0,T;L^2(G_0))$ such that the corresponding solution $(y,y_\Gamma,Y,\widetilde{Y})$ to \eqref{1.1} satisfies $$\mathbb{E}\vert (y(0,\cdot),y_\Gamma(0,\cdot))-(y_d,y_{\Gamma,d})\vert^2_{\mathbb{L}^2}\leq\varepsilon.$$
\end{rmk}
In this paper, we will need the following special case of Itô's formulas when we want to show an energy estimate for solutions of equation \eqref{1.012} and for the duality relation between solutions of \eqref{1.1} and \eqref{1.012}. For more details about Itô's formulas, see, e.g., \cite[Chapter 2]{lu2021mathematical}.
\begin{lm}\label{lm1.1}
    Let $X,Y\in L^2_\mathcal{F}(0,T;H^1(G))$, $X_0\in L^2_{\mathcal{F}_0}(\Omega;L^2(G))$, $Y_T\in L^2_{\mathcal{F}_T}(\Omega;L^2(G))$, \\$\Phi,\Tilde{\Phi}\in L^2_\mathcal{F}(0,T;(H^1(G))')$ and $\Psi,Z\in L^2_\mathcal{F}(0,T;L^2(G))$, such that for all $t\in [0,T]$, the processes $X$ and $(Y,Z)$ satisfying respectively 
    $$X(t)=X_0+\int_0^t \Phi(s)ds+\int_0^t \Psi(s)dW(s),\,\,\;\mathbb{P}\textnormal{-a.s.},$$
    and
    $$Y(t)=Y_T-\int_t^T \Tilde{\Phi}(s)ds-\int_t^T Z(s)dW(s),\,\,\;\mathbb{P}\textnormal{-a.s.}$$
Then, we have
\begin{enumerate}
\item For all $t\in[0,T]$,
\begin{align*}
\vert X(t)\vert^2_{L^2(G)}=&\,\vert X_0\vert^2_{L^2(G)}+2\int_0^t \langle\Phi(s),X(s)\rangle_{(H^1(G))',H^1(G)} \,ds\\
&+2\int_0^t (\Psi(s),X(s))_{L^2(G)} \,dW(s)+\int_0^t \vert\Psi(s)\vert^2_{L^2(G)} \,ds,\,\,\mathbb{P}\textnormal{-a.s.}
\end{align*}
\item For all $t\in[0,T]$,
\begin{align*}
( X(t),Y(t))_{L^2(G)}=&\,( X_0,Y(0))_{L^2(G)}+\int_0^t \langle\Phi(s),Y(s)\rangle_{(H^1(G))',H^1(G)}ds\\
&+\int_0^t \langle\Tilde{\Phi}(s),X(s)\rangle_{(H^1(G))',H^1(G)}ds+\int_0^t (X(s),Z(s))_{L^2(G)}dW(s)\\
&+\int_0^t (Y(s),\Psi(s))_{L^2(G)}dW(s)+\int_0^t (\Psi(s),Z(s))_{L^2(G)}ds,\,\,\mathbb{P}\textnormal{-a.s.}
\end{align*}
\end{enumerate}
\end{lm}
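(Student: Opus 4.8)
The plan is to read both identities as instances of the variational It\^o formula for the Gelfand triple $H^1(G)\hookrightarrow L^2(G)\hookrightarrow(H^1(G))'$, in which $L^2(G)$ is identified with its dual so that the duality pairing $\langle\cdot,\cdot\rangle_{(H^1(G))',H^1(G)}$ extends the inner product $(\cdot,\cdot)_{L^2(G)}$. The second identity follows from the first by polarization. Indeed, since $Y$ is $\{\mathcal{F}_t\}_{t\geq0}$-adapted, evaluating its backward representation at $t=0$ gives the forward form $Y(t)=Y(0)+\int_0^t\tilde\Phi(s)\,ds+\int_0^t Z(s)\,dW(s)$ with $Y(0)\in L^2_{\mathcal{F}_0}(\Omega;L^2(G))$; hence $X+Y$ is again a forward process with drift $\Phi+\tilde\Phi\in L^2_\mathcal{F}(0,T;(H^1(G))')$ and diffusion $\Psi+Z\in L^2_\mathcal{F}(0,T;L^2(G))$. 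Applying the first identity to $|X+Y|^2_{L^2(G)}$, $|X|^2_{L^2(G)}$ and $|Y|^2_{L^2(G)}$ and using $(X,Y)_{L^2(G)}=\tfrac12\big(|X+Y|^2_{L^2(G)}-|X|^2_{L^2(G)}-|Y|^2_{L^2(G)}\big)$ produces exactly the cross terms $\langle\Phi,Y\rangle+\langle\tilde\Phi,X\rangle$ in the drift, $(\Psi,Z)_{L^2(G)}$ from the quadratic variation, and $(\Psi,Y)_{L^2(G)}+(Z,X)_{L^2(G)}$ in the martingale part, with initial value $(X_0,Y(0))_{L^2(G)}$ as claimed.

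For the first identity I would argue by Galerkin approximation. Fix the orthonormal basis $\{e_k\}_{k\geq1}$ of $L^2(G)$ formed by the eigenfunctions of $-\Delta+I$ with homogeneous Neumann boundary conditions; these are orthogonal also in $H^1(G)$, and the associated $L^2(G)$-orthogonal projections $P_n$ onto $\mathrm{span}\{e_1,\dots,e_n\}$ satisfy $P_n\to I$ strongly in both $L^2(G)$ and $H^1(G)$. Set $X_n=P_nX$, which solves the finite-dimensional It\^o equation $X_n(t)=P_nX_0+\int_0^tP_n\Phi(s)\,ds+\int_0^tP_n\Psi(s)\,dW(s)$, where $P_n\Phi:=\sum_{k=1}^n\langle\Phi,e_k\rangle_{(H^1(G))',H^1(G)}\,e_k$. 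Since $X_n$ takes values in a finite-dimensional subspace, the classical It\^o formula in $\mathbb{R}^n$ applied to $|X_n(t)|^2_{L^2(G)}=\sum_{k=1}^n(X_n(t),e_k)^2_{L^2(G)}$ gives
\begin{align*}
|X_n(t)|^2_{L^2(G)}=|P_nX_0|^2_{L^2(G)}&+2\int_0^t(P_n\Phi(s),X_n(s))_{L^2(G)}\,ds\\
&+2\int_0^t(P_n\Psi(s),X_n(s))_{L^2(G)}\,dW(s)+\int_0^t|P_n\Psi(s)|^2_{L^2(G)}\,ds.
\end{align*}

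The main obstacle is the passage $n\to\infty$, and in particular the duality term. Here I would use the consistency identity $(P_n\Phi,X_n)_{L^2(G)}=(P_n\Phi,P_nX)_{L^2(G)}=\langle\Phi,P_nX\rangle_{(H^1(G))',H^1(G)}$, valid by the definition of $P_n\Phi$; combined with $P_nX\to X$ in $L^2_\mathcal{F}(0,T;H^1(G))$ (which is where the $H^1$-convergence of the chosen basis is essential) and $\Phi\in L^2_\mathcal{F}(0,T;(H^1(G))')$, this yields $\int_0^t(P_n\Phi,X_n)_{L^2(G)}\to\int_0^t\langle\Phi,X\rangle_{(H^1(G))',H^1(G)}$. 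The symmetric terms converge by dominated convergence since $P_nX_0\to X_0$ and $P_n\Psi\to\Psi$ in $L^2(G)$, and the stochastic integral converges by the It\^o isometry (or the Burkholder--Davis--Gundy inequality) because its integrand converges in $L^2_\mathcal{F}(0,T;L^2(G))$; along a subsequence this gives convergence uniformly on $[0,T]$, $\mathbb{P}$-a.s. The limit identifies $|X_n(t)|^2_{L^2(G)}\to|X(t)|^2_{L^2(G)}$ and delivers the stated formula, incidentally furnishing the continuous version $X\in L^2_\mathcal{F}(\Omega;C([0,T];L^2(G)))$. A basis-free alternative is the time-mollification (Steklov averaging) method of Lions: one applies the smooth chain rule to the regularized process $X^\varepsilon$ and removes the mollification, the decisive point in either approach being the control of $\langle\Phi,X\rangle$ with only $X\in H^1(G)$ and $\Phi\in(H^1(G))'$, which is exactly what the $L^2(0,T;H^1(G))$-regularity of $X$ provides.
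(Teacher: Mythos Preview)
Your argument is sound: the Galerkin projection onto the Neumann eigenbasis, the identity $(P_n\Phi,X_n)_{L^2(G)}=\langle\Phi,P_nX\rangle_{(H^1(G))',H^1(G)}$, and the polarization step all work as you describe, and this is indeed one of the standard routes to the variational It\^o formula in a Gelfand triple. There is, however, nothing to compare against: the paper does not prove Lemma~\ref{lm1.1} at all but simply states it and refers the reader to \cite[Chapter~2]{lu2021mathematical}. In that sense you have supplied strictly more than the paper does, and your sketch is consistent with the proofs one finds in the cited reference (or in the Krylov--Rozovskii/Pardoux tradition).

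One minor remark on presentation: the step ``$P_n\to I$ strongly in $H^1(G)$'' deserves the one-line justification that, because the Neumann eigenfunctions are orthogonal in $H^1(G)$ as well as in $L^2(G)$, the $L^2$-orthogonal projection $P_n$ restricted to $H^1(G)$ coincides with the $H^1$-orthogonal projection onto $\mathrm{span}\{e_1,\dots,e_n\}$, which is what makes $\|P_nX\|_{H^1(G)}\le\|X\|_{H^1(G)}$ available for the dominated-convergence argument in $L^2_\mathcal{F}(0,T;H^1(G))$. You allude to this, but making it explicit removes any doubt about the passage to the limit in the duality term.
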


In the above Lemma, $(\cdot,\cdot)_{L^2(G)}$ and $\langle\cdot,\cdot\rangle_{(H^{1}(G))',H^1(G)}$ denote respectively the inner product in $L^2(G)$ and the duality product between $H^1(G)$ and its dual space $(H^{1}(G))'$ w.r.t to the pivot space $L^2(G)$.
\begin{rmk}
We can also give a similar Itô's formulas as in Lemma \ref{lm1.1} for some stochastic processes $X_\Gamma,Y_\Gamma\in L^2_\mathcal{F}(0,T;H^1(\Gamma))$. In general, for simplicity of notations, we use the differential form of Itô's formula which is given as follows $``d(X,Y)=(dX,Y)+(X,dY)+(dX,dY)"$ where $(\cdot,\cdot)$ designed either the inner product of the space $L^2(G)$ or $L^2(\Gamma)$. 
\end{rmk}
Note that the operators $\operatorname{div}$ and $\operatorname{div}_\Gamma$ appearing in the adjoint equation \eqref{1.012} are to be understood in the weak sense. Dealing with this, we use the following result. For the proof, see, e.g., \cite{DesZol,khoutaibi2020null}.
\begin{lm}\label{lm1.2}
For any $F\in L^2(G;\mathbb{R}^N)$ and $F_\Gamma\in L^2(\Gamma;\mathbb{R}^N)$, we define the extension of the operator divergence $($resp., tangential divergence$)$ of $F$ $($resp., $F_\Gamma$$)$ as the linear continuous operator on $H^1(G)$ $($resp., on $H^1(\Gamma)$$)$ as follows
\begin{equation*}
{\normalfont\text{div}}(F):H^1(G)\longrightarrow\mathbb{R},\,\, v \longmapsto -\int_G F\cdot\nabla v\, dx + \langle F\cdot\nu,v\vert_\Gamma\rangle_{H^{-1/2}(\Gamma),H^{1/2}(\Gamma)},
\end{equation*}
\begin{equation*}
{\normalfont\text{div}}_\Gamma(F_\Gamma):H^1(\Gamma)\longrightarrow\mathbb{R},\,\, v_\Gamma \longmapsto -\int_\Gamma F_\Gamma\cdot\nabla_\Gamma v_\Gamma\, d\sigma.
\end{equation*}
\end{lm}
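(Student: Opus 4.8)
\textbf{Proof proposal for Lemma \ref{lm1.2}.}

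The plan is to verify that each of the two formulas defines a well-defined element of the dual space, that is, a linear continuous functional, and to identify it as the natural weak extension of the classical divergence operator. I would treat the two cases separately, starting with the bulk operator $\text{div}(F)$ on $H^1(G)$, and then handling the tangential operator $\text{div}_\Gamma(F_\Gamma)$ on $H^1(\Gamma)$, which is in fact simpler because no boundary term appears.

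First, for the bulk case, I would fix $F\in L^2(G;\mathbb{R}^N)$ and consider the map $v\longmapsto -\int_G F\cdot\nabla v\,dx + \langle F\cdot\nu,v\vert_\Gamma\rangle_{H^{-1/2}(\Gamma),H^{1/2}(\Gamma)}$. The main point requiring justification is that the normal trace $F\cdot\nu$ is a well-defined element of $H^{-1/2}(\Gamma)$; this is not automatic for a general $F\in L^2(G;\mathbb{R}^N)$, so one restricts to the space
\begin{equation*}
H(\text{div},G):=\{F\in L^2(G;\mathbb{R}^N)\,:\,\text{div}(F)\in L^2(G)\},
\end{equation*}
on which the classical normal trace theorem (see \cite{DesZol}) gives a continuous map $F\mapsto F\cdot\nu\in H^{-1/2}(\Gamma)$ together with the Green formula. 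For such $F$, linearity of the map in $v$ is immediate. For continuity, I would bound the first term using Cauchy–Schwarz, $\big|\int_G F\cdot\nabla v\,dx\big|\leq |F|_{L^2(G)}\,|\nabla v|_{L^2(G)}\leq |F|_{L^2(G)}\,|v|_{H^1(G)}$, and the second term using the duality pairing bound together with the continuity of the trace $v\mapsto v\vert_\Gamma$ from $H^1(G)$ into $H^{1/2}(\Gamma)$, giving $\big|\langle F\cdot\nu,v\vert_\Gamma\rangle\big|\leq |F\cdot\nu|_{H^{-1/2}(\Gamma)}\,|v\vert_\Gamma|_{H^{1/2}(\Gamma)}\leq C\,|F\cdot\nu|_{H^{-1/2}(\Gamma)}\,|v|_{H^1(G)}$. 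Summing these yields $|\text{div}(F)(v)|\leq C\,|v|_{H^1(G)}$ with a constant depending on $F$, so the functional is continuous, i.e., $\text{div}(F)\in(H^1(G))'$. Finally, to see this is genuinely the divergence, I would take $F\in C^1(\overline{G};\mathbb{R}^N)$ and apply the classical divergence (Green) theorem $\int_G(\text{div}\,F)v\,dx=-\int_G F\cdot\nabla v\,dx+\int_\Gamma(F\cdot\nu)v\,d\sigma$, which shows the formula reproduces the strong divergence on smooth fields and hence extends it by density.

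For the tangential case, I would fix $F_\Gamma\in L^2(\Gamma;\mathbb{R}^N)$ and consider $v_\Gamma\longmapsto -\int_\Gamma F_\Gamma\cdot\nabla_\Gamma v_\Gamma\,d\sigma$. Linearity is again clear, and continuity follows directly from Cauchy–Schwarz, $\big|\int_\Gamma F_\Gamma\cdot\nabla_\Gamma v_\Gamma\,d\sigma\big|\leq |F_\Gamma|_{L^2(\Gamma)}\,|\nabla_\Gamma v_\Gamma|_{L^2(\Gamma)}\leq |F_\Gamma|_{L^2(\Gamma)}\,|v_\Gamma|_{H^1(\Gamma)}$, so $\text{div}_\Gamma(F_\Gamma)\in(H^1(\Gamma))'$. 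The absence of a boundary term is exactly the statement that $\Gamma$ is a closed manifold (a compact boundary without boundary), so that the tangential Green formula $\int_\Gamma(\text{div}_\Gamma\,F_\Gamma)v_\Gamma\,d\sigma=-\int_\Gamma F_\Gamma\cdot\nabla_\Gamma v_\Gamma\,d\sigma$ holds with no extra term; I would invoke this for smooth tangential fields to confirm that the formula is the weak extension of the classical tangential divergence.

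The step I expect to be the main obstacle is the normal trace $F\cdot\nu\in H^{-1/2}(\Gamma)$ in the bulk case: for a raw $F\in L^2(G;\mathbb{R}^N)$ this pairing need not make sense, and the correct reading of the lemma is on the space $H(\text{div},G)$ where the trace theorem and the associated Green formula are available. Once this trace is secured, both continuity estimates are routine Cauchy–Schwarz and trace bounds, and the identification with the classical divergence operators follows by density of smooth fields. I would therefore devote most of the write-up to carefully citing the normal trace theorem from \cite{DesZol} and to stating the density argument, keeping the elementary estimates brief.
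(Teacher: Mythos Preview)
The paper does not supply its own proof of this lemma: immediately before the statement it writes ``For the proof, see, e.g., \cite{DesZol,khoutaibi2020null}.'' and leaves it at that. So there is no in-paper argument to compare against; your proposal is therefore more detailed than what the authors provide.

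Your outline is the standard route and is correct. In particular, you are right to flag that the boundary pairing $\langle F\cdot\nu,v\vert_\Gamma\rangle_{H^{-1/2}(\Gamma),H^{1/2}(\Gamma)}$ is only meaningful once the normal trace $F\cdot\nu$ exists in $H^{-1/2}(\Gamma)$, which is guaranteed on $H(\textnormal{div},G)$ rather than on all of $L^2(G;\mathbb{R}^N)$ as the lemma literally states. This is a genuine subtlety in the statement; in the paper's applications the fields are of the form $F=zB$ with $z\in H^1(G)$ and $B\in L^\infty$, and the term $F\cdot\nu$ is always paired with a compensating $-F\cdot\nu$ in the surface equation so that, in the weak formulations actually used (see the definition of weak solution for \eqref{1.012}), only the integral $-\int_G F\cdot\nabla\eta\,dx$ survives. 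Your reading---restrict to $H(\textnormal{div},G)$ or interpret the two boundary contributions jointly---is the correct way to make the lemma precise, and the cited references \cite{DesZol,khoutaibi2020null} handle exactly this point.
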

\begin{rmk}
Here, without saying it the notion of vectors on $\Gamma$ must be interpreted in the sense of vector field on the variety $\Gamma$, that is to say as a function $\gamma:\Gamma\rightarrow\mathbb{R}^N$ so that $\gamma(x)\in T_x\Gamma$, where
$$T_x\Gamma=\{p\in\mathbb{R}^N,\quad p\cdot\nu(x)=0\}$$
is the tangent space at the point $x\in\Gamma$.
\end{rmk}

This paper is organized as follows. In Section \ref{sec2}, we study the well-posedness of equations \eqref{1.1} and \eqref{1.012}. In Section \ref{sec3}, we establish a global Carleman estimate for general forward stochastic parabolic equations and deduce the proof of Theorem \ref{thm1.2}. In Section \ref{sec4}, we prove Theorem \ref{thm1.1}.
\section{Well-posedness of equations}\label{sec2}
This section is devoted to studying the well-posedness of the forward (resp., backward) linear stochastic parabolic equation \eqref{1.012} (resp., \eqref{1.1}). For more details on the well-posedness and regularity of solutions of stochastic equations, we refer the readers to \cite{da2014stochastic,pardoux1990adapted,X. Zhou}.

We first define the weak solution of the system \eqref{1.012}.
\begin{df}
The process $(z,z_\Gamma)\in L^2_\mathcal{F}(0,T;\mathbb{H}^1)$ is said to be a weak solution of \eqref{1.012} if for any $(\eta,\eta_\Gamma)\in\mathbb{H}^1$ and all $t\in[0,T]$, it holds that
\begin{align*}
&\,\int_G (z(t)-z_0)\eta \,dx + \int_\Gamma(z_\Gamma(t)-z_{\Gamma,0})\eta_\Gamma \,d\sigma\\
&=-\int_0^t\int_G A\nabla z\cdot\nabla\eta \,dx ds-\int_0^t\int_G a_1z\eta \,dx ds-\int_0^t\int_G zB\cdot\nabla\eta \,dxds\\
&\quad\,-\int_0^t\int_G a_2z\eta \,dx dW(s)-\int_0^t\int_\Gamma A_\Gamma\nabla_\Gamma z_\Gamma\cdot\nabla_\Gamma \eta_\Gamma \,d\sigma ds-\int_0^t\int_\Gamma b_1z_\Gamma\eta_\Gamma \,d\sigma ds\\
&\quad\,-\int_0^t\int_\Gamma z_\Gamma B_\Gamma \cdot\nabla_\Gamma\eta_\Gamma \,d\sigma ds-\int_0^t\int_\Gamma b_2z_\Gamma\eta_\Gamma \,d\sigma dW(s),\quad \mathbb{P}\textnormal{-a.s.}
\end{align*}
\end{df}
We have the following well-posedness result of system \eqref{1.012}.
\begin{thm}\label{thm2.1}
For any initial state $(z_0,z_{\Gamma,0})\in L^2_{\mathcal{F}_0}(\Omega;\mathbb{L}^2)$, there exists a unique weak solution $(z,z_\Gamma)\in L^2_\mathcal{F}(0,T;\mathbb{H}^1)$ of \eqref{1.012}. Moreover,
\begin{align}\label{deppconine}
\vert(z,z_\Gamma)\vert_{L^2_\mathcal{F}(0,T;\mathbb{H}^1)}\leq C\, \vert(z_0,z_{\Gamma,0})\vert_{L^2_{\mathcal{F}_0}(\Omega;\mathbb{L}^2)}.
\end{align}
\end{thm}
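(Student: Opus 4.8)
The plan is to establish existence, uniqueness, and the energy estimate \eqref{deppconine} by the standard Galerkin (or semigroup/variational) scheme adapted to the coupled bulk--surface setting, treating the first-order terms $\operatorname{div}(zB)$ and $\operatorname{div}_\Gamma(z_\Gamma B_\Gamma)$ as lower-order perturbations that, thanks to the weak formulation in Lemma \ref{lm1.2}, map $\mathbb{H}^1$ continuously into $(\mathbb{H}^1)'$. First I would set up the Gelfand triple $\mathbb{H}^1\hookrightarrow\mathbb{L}^2\hookrightarrow(\mathbb{H}^1)'$, noting that by the trace theorem and compactness of $\Gamma$ the embedding $\mathbb{H}^1\hookrightarrow\mathbb{L}^2$ is continuous and dense; the natural bilinear form associated with \eqref{1.012} is
\begin{align*}
\mathfrak{a}(t;(z,z_\Gamma),(\eta,\eta_\Gamma))=&\,\int_G A\nabla z\cdot\nabla\eta\,dx+\int_\Gamma A_\Gamma\nabla_\Gamma z_\Gamma\cdot\nabla_\Gamma\eta_\Gamma\,d\sigma\\
&+\int_G a_1 z\eta\,dx+\int_\Gamma b_1 z_\Gamma\eta_\Gamma\,d\sigma+\int_G zB\cdot\nabla\eta\,dx+\int_\Gamma z_\Gamma B_\Gamma\cdot\nabla_\Gamma\eta_\Gamma\,d\sigma.
\end{align*}
Using the uniform ellipticity in assumption (3), the boundedness of $A,A_\Gamma,a_1,b_1,B,B_\Gamma$, and Young's inequality on the first-order terms, one gets a Gårding inequality $\mathfrak{a}(t;(z,z_\Gamma),(z,z_\Gamma))\ge \tfrac{\beta}{2}\,|(z,z_\Gamma)|_{\mathbb{H}^1}^2-\lambda_0\,|(z,z_\Gamma)|_{\mathbb{L}^2}^2$ for a constant $\lambda_0$ depending on the $L^\infty$ norms of the coefficients, together with the continuity bound $|\mathfrak{a}(t;\cdot,\cdot)|\le C|(z,z_\Gamma)|_{\mathbb{H}^1}|(\eta,\eta_\Gamma)|_{\mathbb{H}^1}$.

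Next I would run a Galerkin approximation: pick a basis of $\mathbb{H}^1$ consisting of eigenfunctions of the Wentzell Laplacian $(-\operatorname{div}(A\nabla\cdot),-\operatorname{div}_\Gamma(A_\Gamma\nabla_\Gamma\cdot)+\partial_\nu^A\cdot)$ (which has compact resolvent in $\mathbb{L}^2$ by the compactness of $\mathbb{H}^1\hookrightarrow\mathbb{L}^2$), project \eqref{1.012} onto the $n$-dimensional span to obtain a finite-dimensional linear Itô SDE with bounded coefficients, which has a unique strong solution. Applying Itô's formula (Lemma \ref{lm1.1}, and its surface analogue in the Remark) to $|(z^n,z_\Gamma^n)(t)|_{\mathbb{L}^2}^2$ and taking expectations, the martingale part drops out, the stochastic integrand contributes $\mathbb{E}\int_0^t(|a_2 z^n|_{L^2(G)}^2+|b_2 z_\Gamma^n|_{L^2(\Gamma)}^2)ds$ which is controlled by $C\,\mathbb{E}\int_0^t|(z^n,z_\Gamma^n)|_{\mathbb{L}^2}^2 ds$, and the drift part produces $-2\,\mathbb{E}\int_0^t\mathfrak{a}(s;\cdot,\cdot)ds$; invoking the Gårding inequality and Grönwall's lemma yields the uniform bound
\begin{align*}
\mathbb{E}\,|(z^n,z_\Gamma^n)(t)|_{\mathbb{L}^2}^2+\beta\,\mathbb{E}\int_0^T|(z^n,z_\Gamma^n)|_{\mathbb{H}^1}^2\,dt\le C\,\mathbb{E}\,|(z_0,z_{\Gamma,0})|_{\mathbb{L}^2}^2,
\end{align*}
with $C$ depending only on $T$, $\beta$, and the $L^\infty$-norms of the coefficients. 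This gives weak compactness of $(z^n,z_\Gamma^n)$ in $L^2_\mathcal{F}(0,T;\mathbb{H}^1)$; passing to the limit in the (linear) weak formulation — the first-order terms pass to the limit because $v\mapsto\int_G zB\cdot\nabla v$ is weakly continuous — produces a weak solution satisfying \eqref{deppconine}.

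For uniqueness I would take the difference of two weak solutions, which solves \eqref{1.012} with zero initial data, apply the Itô formula of Lemma \ref{lm1.1}(1) (plus its surface version) to the $\mathbb{L}^2$-norm of the difference — this is legitimate since a weak solution lies in $L^2_\mathcal{F}(0,T;\mathbb{H}^1)$ and its Itô differential lies in $(\mathbb{H}^1)'+\mathbb{L}^2\,dW$ — and conclude by Grönwall that the difference vanishes $\mathbb{P}$-a.s. The main obstacle, and the place requiring the most care, is the coupling through the co-normal derivative $\partial_\nu^A z$: it never appears as an independent quantity in the weak formulation (it is absorbed into the joint duality pairing on $\mathbb{H}^1$), so one must consistently work with the coupled space $\mathbb{H}^1$ and its dual rather than treating bulk and boundary separately; one also has to check that the Galerkin basis respects the constraint $z|_\Gamma=z_\Gamma$, which is why the eigenfunctions of the Wentzell operator (rather than, say, separate Neumann bases) are the right choice. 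Once that structural point is handled, the first-order perturbation terms are routine and the argument closes.
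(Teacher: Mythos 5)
Your proposal is correct and follows essentially the same route as the paper: a Galerkin approximation in the coupled space $\mathbb{H}^1$ (the paper takes a generic Hilbert basis of $\mathbb{H}^1$ orthonormal in $\mathbb{L}^2$, of which your Wentzell eigenfunctions are a particular instance), It\^o energy estimates with Young's and Gronwall's inequalities to get the uniform bound, weak compactness and passage to the limit in the weak formulation, and uniqueness by applying It\^o's formula to the difference of two solutions and Gronwall. The only presentational difference is your explicit G\r{a}rding/Gelfand-triple framing, which the paper carries out directly through the coercivity of $A$ and $A_\Gamma$ in the estimates.
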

\begin{proof}  
For the uniqueness, let us suppose that $(z,z_\Gamma)\in L^2_\mathcal{F}(0,T;\mathbb{H}^1)$ is a weak solution of \eqref{1.012} with initial condition $(z_0,z_{\Gamma,0})=(0,0)$. Fix $t\in[0,T]$ and by Itô's formula in \ref{lm1.1}, we obtain that
\begin{align*}
&\,\mathbb{E}\int_G z^2(t)\,dx+\mathbb{E}\int_\Gamma  z^2_\Gamma(t)\,d\sigma\\
&=-2\mathbb{E}\int_0^t\int_G A\nabla z\cdot\nabla z dxds-2\mathbb{E}\int_0^t\int_G a_1z^2dxds-2\mathbb{E}\int_0^t\int_G zB\cdot\nabla z dxds\\
&\quad\,+\mathbb{E}\int_0^t\int_G a_2^2z^2dxds-2\mathbb{E}\int_0^t\int_\Gamma A_\Gamma\nabla_\Gamma z_\Gamma\cdot\nabla_\Gamma z_\Gamma d\sigma ds-2\mathbb{E}\int_0^t\int_\Gamma b_1z_\Gamma^2d\sigma ds\\
&\quad\,-2\mathbb{E}\int_0^t\int_\Gamma z_\Gamma B_\Gamma \cdot\nabla_\Gamma z_\Gamma d\sigma ds+\mathbb{E}\int_0^t\int_\Gamma b_2^2z_\Gamma^2 d\sigma ds.
\end{align*}
Utilizing assumptions on $A$, $A_\Gamma$ and applying Young's inequality, it follows that for any $\varepsilon>0$
\begin{align*}
&\,\mathbb{E}\int_G  z^2(t)\,dx+\mathbb{E}\int_\Gamma  z^2_\Gamma(t)\,d\sigma\\
&\leq -2\beta\mathbb{E}\int_0^t\int_G \vert\nabla z\vert^2dxds+2\vert a_1\vert_\infty \mathbb{E}\int_0^t\int_G z^2 dxds+\varepsilon\mathbb{E}\int_0^t\int_G \vert\nabla z\vert^2dxds\\
&\quad\,+\vert B\vert^2_\infty C(\varepsilon)\mathbb{E}\int_0^t\int_G z^2 dxds+\vert a_2\vert_\infty^2\mathbb{E}\int_0^t\int_G z^2 dxds-2\beta\mathbb{E}\int_0^t\int_\Gamma \vert\nabla_\Gamma z_\Gamma\vert^2d\sigma ds\\
&\quad\,+2\vert b_1\vert_\infty\mathbb{E}\int_0^t\int_\Gamma z_\Gamma^2d\sigma ds+\varepsilon\mathbb{E}\int_0^t\int_\Gamma \vert\nabla_\Gamma z_\Gamma\vert^2 d\sigma ds+\vert B_\Gamma\vert^2_\infty C(\varepsilon)\mathbb{E}\int_0^t\int_\Gamma z_\Gamma^2 d\sigma ds\\
&\quad\,+\vert b_2\vert_\infty^2\mathbb{E}\int_0^t\int_\Gamma z_\Gamma^2 d\sigma ds.
\end{align*}
Now, by choosing a small $0<\varepsilon\leq2\beta$, we conclude that
$$\mathbb{E}\int_G  z^2(t)\,dx+\mathbb{E}\int_\Gamma  z^2_\Gamma(t)\,d\sigma\leq \mathcal{C}\,\,\Bigg(\mathbb{E}\int_0^t\int_G z^2dxds+\mathbb{E}\int_0^t\int_\Gamma z_\Gamma^2d\sigma ds\Bigg),$$
where $\mathcal{C}$ is a positive constant depending on $\vert a_1\vert_\infty$, $\vert a_2\vert_\infty$, $\vert B\vert_\infty$, $\vert b_1\vert_\infty$, $\vert b_2\vert_\infty$, and $\vert B_\Gamma\vert_\infty$. Hence, by Gronwall's inequality, we obtain that $(z(t),z_\Gamma(t))=(0,0)$. This concludes the uniqueness of solutions of system \eqref{1.012}.

For the existence of solutions, we borrow some ideas from \cite{krylov,X. Zhou}. We first consider an appropriate approximation of \eqref{1.012}, and then we show that the sequence solution of such approximation converges to the weak solution of \eqref{1.012}. For simplicity of notations, we denote by $\langle\cdot,\cdot\rangle_1$ (resp., $\langle\cdot,\cdot\rangle_{1,\Gamma}$) the duality pairing between $(H^1(G))'$ and $H^1(G)$ (resp., $H^{-1}(\Gamma)$ and $H^1(\Gamma)$), under $(L^2(G))'\equiv L^2(G)$ (resp., $(L^2(\Gamma))'\equiv L^2(\Gamma))$. We adopt also the notation $(\cdot,\cdot)_0$ (resp., $(\cdot,\cdot)_{0,\Gamma}$) for the inner product in $L^2(G)$ (resp., $L^2(\Gamma)$). 

Let us now rewrite equation \eqref{1.012} under the following simple form
\begin{equation}\label{2.7}
		\begin{cases}
  \begin{array}{ll}
 dz(t)=(\mathcal{A}(t)z(t)-a_1(t)z(t))dt-a_2(t)z(t)dW(t)&\textnormal{in}\,\,Q,\\
   dz_\Gamma(t)=(\mathcal{A}_\Gamma(t) z_\Gamma(t)-b_1(t)z_\Gamma(t))dt-b_2(t)z_\Gamma(t)dW(t)&\textnormal{on}\,\,\Sigma,\\
   z_\Gamma(t,\cdot)=z\vert_\Gamma(t,\cdot)&\textnormal{on}\,\,\Sigma,\\
   (z,z_\Gamma)\vert_{t=0}=(z_0,z_{\Gamma,0})&\textnormal{in}\,\,G\times\Gamma,
   \end{array}
	\end{cases}\end{equation}
where
\begin{align*}
&\,\mathcal{A}(t)z(t)=\textnormal{div}(A\nabla z)+\textnormal{div}(zB),\\
&\mathcal{A}_\Gamma(t) z_\Gamma(t)=\textnormal{div}_\Gamma(A_\Gamma\nabla_\Gamma z_\Gamma)-\partial^A_\nu z-z_\Gamma B\cdot\nu+\textnormal{div}_\Gamma(z_\Gamma B_\Gamma ).
\end{align*}

Let $(e_i,e_{i,\Gamma})_{i\geq1}$ be a Hilbert basis of $\mathbb{H}^1$, which is orthonormal as a basis of $\mathbb{L}^2$. Fix an integer $n\geq1$, then for all $i=1,2,...,n$, there exists $z_{ni}\in L^2_\mathcal{F}(0,T;\mathbb{R})$ satisfying the following stochastic differential equation: 
\begin{align}\label{2.8}
\begin{cases}
&dz_{ni}(t)=\displaystyle\sum_{j=1}^n \bigg[\langle\mathcal{A}(t)e_j,e_i\rangle_1 -(a_1(t)e_j,e_i)_0+\langle\mathcal{A}_\Gamma(t) e_{j,\Gamma},e_{i,\Gamma}\rangle_{1,\Gamma}\\
&\hspace{2.2cm}-(b_1(t)e_{j,\Gamma},e_{i,\Gamma})_{0,\Gamma}\bigg]z_{nj}(t) \,dt+\displaystyle\sum_{j=1}^n \bigg[-(a_2(t)e_j,e_i)_0\\
&\hspace{2.2cm}-(b_2(t)e_{j,\Gamma},e_{i,\Gamma})_{0,\Gamma}\bigg] z_{nj}(t) \,dW(t),\qquad\quad t\in(0,T),\\
&z_{ni}(0)=z_{ni,0},
\end{cases}
\end{align}
where $z_{ni,0}=\langle(z_0,z_{\Gamma,0}),(e_i,e_{i,\Gamma})\rangle_{\mathbb{L}^2}$. Let us define $$(z_n(t),z_{n,\Gamma}(t)):=\displaystyle\sum_{i=1}^n z_{ni}(t)(e_i,e_{i,\Gamma})\in L^2_\mathcal{F}(0,T;\mathbb{H}^1).$$ 
Note also that
\begin{equation}\label{2.30}
(z_{n,0},z_{n,0\Gamma})=(z_n(0),z_{n,\Gamma}(0)) \longrightarrow (z_0,z_{\Gamma,0})\,\,\,\textnormal{in}\,\,L^2_{\mathcal{F}_0}(\Omega;\mathbb{L}^2)\,\,\,\textnormal{as}\,\,n\rightarrow\infty.
\end{equation}
Now, by Itô's formula, we deduce that for all $t\in[0,T]$
\begin{align}\label{2.4159}
\begin{aligned}
&\,\mathbb{E}\vert z_n(t)\vert_{L^2(G)}^2-\mathbb{E}\vert z_{n,0}\vert_{L^2(G)}^2+\mathbb{E}\vert z_{n,\Gamma}(t)\vert_{L^2(\Gamma)}^2-\mathbb{E}\vert z_{n,0\Gamma}\vert_{L^2(\Gamma)}^2\\
&\leq C\mathbb{E}\int_0^t\int_G \vert z_n\vert^2dxds-2\beta\mathbb{E}\int_0^t\int_G \vert\nabla z_n\vert^2dxds+C\mathbb{E}\int_0^t\int_G \vert z_n\vert\vert\nabla z_n\vert dxds\\
&\quad\,+C\mathbb{E}\int_0^t\int_\Gamma \vert z_{n,\Gamma}\vert^2d\sigma ds-2\beta\mathbb{E}\int_0^t\int_\Gamma \vert\nabla_\Gamma z_{n,\Gamma}\vert^2d\sigma ds+C\mathbb{E}\int_0^t\int_\Gamma \vert z_{n,\Gamma}\vert\vert\nabla_\Gamma z_{n,\Gamma}\vert d\sigma ds.
\end{aligned}
\end{align}
By Young's inequality, \eqref{2.4159} implies that for any $\varepsilon>0$
\begin{align*}
&\,\mathbb{E}\vert z_n(t)\vert_{L^2(G)}^2-\mathbb{E}\vert z_{n,0}\vert_{L^2(G)}^2+\mathbb{E}\vert z_{n,\Gamma}(t)\vert_{L^2(\Gamma)}^2-\mathbb{E}\vert z_{n,0\Gamma}\vert_{L^2(\Gamma)}^2\\
&\leq C\mathbb{E}\int_0^t\int_G \vert z_n\vert^2dxds-2\beta\mathbb{E}\int_0^t\int_G \vert\nabla z_n\vert^2dxds+\varepsilon\mathbb{E}\int_0^t\int_G \vert\nabla z_n\vert^2 dxds\\
&\quad\,+C(\varepsilon)\mathbb{E}\int_0^t\int_G \vert z_n\vert^2 dxds+C\mathbb{E}\int_0^t\int_\Gamma \vert z_{n,\Gamma}\vert^2d\sigma ds-2\beta\mathbb{E}\int_0^t\int_\Gamma \vert\nabla_\Gamma z_{n,\Gamma}\vert^2d\sigma ds\\
&\quad\,+\varepsilon\mathbb{E}\int_0^t\int_\Gamma \vert\nabla_\Gamma z_{n,\Gamma}\vert^2 d\sigma ds+C(\varepsilon)\mathbb{E}\int_0^t\int_\Gamma \vert z_{n,\Gamma}\vert^2 d\sigma ds.
\end{align*}
Taking a small enough $0<\varepsilon<2\beta$, we deduce that
\begin{align*}
&\,\mathbb{E}\vert z_n(t)\vert_{L^2(G)}^2+\mathbb{E}\vert z_{n,\Gamma}(t)\vert_{L^2(\Gamma)}^2+\mathbb{E}\int_0^t\int_G \vert\nabla z_{n}\vert^2 dx ds+\mathbb{E}\int_0^t\int_\Gamma \vert\nabla_\Gamma z_{n,\Gamma}\vert^2 d\sigma ds\\
&\leq C(\mathbb{E}\vert z_{n,0}\vert_{L^2(G)}^2+\mathbb{E}\vert z_{n,0\Gamma}\vert_{L^2(\Gamma)}^2)+C\mathbb{E}\int_0^t\int_G \vert z_{n}\vert^2dxds+C\mathbb{E}\int_0^t\int_\Gamma\vert z_{n,\Gamma}\vert^2d\sigma ds.
\end{align*}
Then, it follows that
\begin{align*}
&\,\mathbb{E}\vert(z_n(t),z_{n,\Gamma}(t))\vert^2_{\mathbb{L}^2}+\mathbb{E}\int_0^t \vert (z_{n}(s),z_{n,\Gamma}(s))\vert^2_{\mathbb{H}^1} ds\\
&\leq C\mathbb{E}\vert(z_{n,0},z_{n,0\Gamma})\vert^2_{\mathbb{L}^2}+C\mathbb{E}\int_0^t \vert (z_{n}(s),z_{n,\Gamma}(s))\vert^2_{\mathbb{L}^2} ds.
\end{align*}
Hence, by Gronwall's inequality, we end up with
\begin{align}\label{2.811}
\begin{aligned}
&\,\sup_{0\leq t\leq T}\mathbb{E}\vert(z_n(t),z_{n,\Gamma}(t))\vert^2_{\mathbb{L}^2}+ \mathbb{E}\int_0^T\vert (z_{n}(s),z_{n,\Gamma}(s))\vert^2_{\mathbb{H}^1} ds\\
&\leq C\,\mathbb{E}\vert (z_{n,0}, z_{n,0\Gamma})\vert_{\mathbb{L}^2}^2.
\end{aligned}
\end{align}
Recalling \eqref{2.30}, then from \eqref{2.811}, we conclude that there exist a subsequence of $(z_n,z_{n,\Gamma})$ (still denotes also by $(z_n,z_{n,\Gamma})$ for simplicity of notations) and  $(z,z_\Gamma)\in L^2_\mathcal{F}(0,T;\mathbb{H}^1)$ such that as $n\rightarrow\infty$
\begin{equation}\label{2.812}
(z_n,z_{n,\Gamma}) \longrightarrow (z,z_\Gamma),\quad\textnormal{weakly in}\quad L^2_\mathcal{F}(0,T;\mathbb{H}^1).
\end{equation}
We claim that $(z,z_\Gamma)$ is the weak solution of \eqref{1.012}. To prove this fact, let us consider $\gamma:[0,T]\rightarrow\mathbb{R}$ an absolutely continuous function with $\overset{.}{\gamma}(t):=d\gamma/dt\in L^2(0,T)$ and $\gamma(T)=0$. For all integer $i\geq1$, set $(\gamma_i(t),\gamma_{i,\Gamma}(t)):=\gamma(t)(e_i,e_{i,\Gamma})$. From \eqref{2.8} and by Itô's formula, we obtain that
\begin{align*}
&\,-(z_{n,0},\gamma_i(0))_0-(z_{n,0\Gamma},\gamma_{i,\Gamma}(0))_{0,\Gamma}\\
&=\int_0^T \Big[\langle\mathcal{A}(t)z_n(t),\gamma_i(t)\rangle_1-(a_1(t)z_n(t),\gamma_i(t))_0]dt-\int_0^T (a_2(t)z_n(t),\gamma_i(t))_0dW(t)\\
&\quad\,+\int_0^T (z_n(t),\overset{.}{\gamma_i}(t))_0dt+\int_0^T \Big[\langle\mathcal{A}_\Gamma(t) z_{n,\Gamma}(t),\gamma_{i,\Gamma}(t)\rangle_{1,\Gamma}-(b_1(t)z_{n,\Gamma}(t),\gamma_{i,\Gamma}(t))_{0,\Gamma}]dt\\
&\quad\,-\int_0^T (b_2(t)z_{n,\Gamma}(t),\gamma_{i,\Gamma}(t))_{0,\Gamma}dW(t)+\int_0^T (z_{n,\Gamma}(t),\overset{.}{\gamma}_{i,\Gamma}(t))_{0,\Gamma}dt,\qquad\textnormal{for all}\,\,i\geq1.
\end{align*}
It follows that for all $(\eta,\eta_\Gamma)\in\mathbb{H}^1$, we have
\begin{align*}
\begin{aligned}
&\,-(z_{n,0},\eta)_0\gamma(0)-(z_{n,0\Gamma},\eta_\Gamma)_{0,\Gamma}\gamma(0)\\
&=\int_0^T \Big[\langle\mathcal{A}(t)z_n(t),\eta\rangle_1-(a_1(t)z_n(t),\eta)_0]\gamma(t) dt-\int_0^T (a_2(t)z_n(t),\eta)_0\gamma(t) dW(t)\\
&\quad\,+\int_0^T (z_n(t),\eta)_0\overset{.}{\gamma}(t)dt+\int_0^T \Big[\langle\mathcal{A}_\Gamma(t) z_{n,\Gamma}(t),\eta_{\Gamma}\rangle_{1,\Gamma}-(b_1(t)z_{n,\Gamma}(t),\eta_{\Gamma})_{0,\Gamma}\Big]\gamma(t) dt\\
&
\quad\,-\int_0^T (b_2(t)z_{n,\Gamma}(t),\eta_{\Gamma})_{0,\Gamma}\gamma(t) dW(t)+\int_0^T (z_{n,\Gamma}(t),\eta_\Gamma)_{0,\Gamma}\overset{.}{\gamma}(t)dt\qquad \mathbb{P}\textnormal{-a.s.}
\end{aligned}
\end{align*}
Hence, by integration by parts, we arrive at
\begin{align}
\begin{aligned}
&\,-(z_{n,0},\eta)_0\gamma(0)-(z_{n,0\Gamma},\eta_\Gamma)_{0,\Gamma}\gamma(0)\\
&=\int_Q \Big[-A\nabla z_n\cdot\nabla\eta-z_nB\cdot\nabla\eta-a_1 z_n\eta]\gamma(t) \,dxdt-\int_Q a_2 z_n \eta\gamma(t) dx dW(t)\\
&\quad\,+\int_Q z_n \eta \overset{.}{\gamma}(t) \,dxdt+\int_\Sigma \Big[-A_\Gamma\nabla_\Gamma z_{n,\Gamma}\cdot\nabla_\Gamma\eta_{\Gamma}-z_{n,\Gamma}B_\Gamma\cdot\nabla_\Gamma\eta_\Gamma\\
&\label{2.1002}
\quad\,-b_1 z_{n,\Gamma}\eta_{\Gamma}\Big]\gamma(t) \,d\sigma dt-\int_\Sigma b_2 z_{n,\Gamma}\eta_{\Gamma}\gamma(t) d\sigma dW(t)+\int_\Sigma z_{n,\Gamma}\eta_\Gamma \overset{.}{\gamma}(t) \,d\sigma dt.
\end{aligned}
\end{align}
Letting $n\rightarrow\infty$ and using \eqref{2.30} and \eqref{2.812}, the equality \eqref{2.1002} provides that
\begin{align}
\begin{aligned}
&\,-(z_{0},\eta)_0\gamma(0)-(z_{\Gamma,0},\eta_\Gamma)_{0,\Gamma}\gamma(0)\\
&=\int_Q \Big[-A\nabla z\cdot\nabla\eta-zB\cdot\nabla\eta-a_1 z\eta]\gamma(t) \,dxdt-\int_Q a_2 z \eta\gamma(t) dx dW(t)\\
&\quad\,+\int_Q z \eta \overset{.}{\gamma}(t) \,dxdt+\int_\Sigma \Big[-A_\Gamma\nabla_\Gamma z_{\Gamma}\cdot\nabla_\Gamma\eta_{\Gamma}-z_{\Gamma}B_\Gamma\cdot\nabla_\Gamma\eta_\Gamma\\
&\label{2.1003}
\quad\,-b_1 z_{\Gamma}\eta_{\Gamma}\Big]\gamma(t) \,d\sigma dt-\int_\Sigma b_2 z_{\Gamma}\eta_{\Gamma}\gamma(t) d\sigma dW(t)+\int_\Sigma z_{\Gamma}\eta_\Gamma \overset{.}{\gamma}(t) \,d\sigma dt.
\end{aligned}
\end{align}
For any $t\in(0,T)$ and $\varepsilon>0$, we define $\gamma_\varepsilon$ by
$$
\gamma_\varepsilon(s):=
\begin{cases}
\begin{array}{llll}
1&&\textnormal{if}&0\leq s\leq t-\varepsilon/2,\\
-\frac{1}{\varepsilon}(s-t-\varepsilon/2)&&\textnormal{if}&t-\varepsilon/2<s< t+\varepsilon/2,\\
0&&\textnormal{if}&t+\varepsilon/2\leq s\leq T.
\end{array}
\end{cases}
$$
Using $\gamma_\varepsilon$ in \eqref{2.1003} and letting $\varepsilon\rightarrow0$, we obtain that
\begin{align*}
&\,\int_G (z(t)-z_0)\eta \,dx + \int_\Gamma(z_\Gamma(t)-z_{\Gamma,0})\eta_\Gamma \,d\sigma\\
&=-\int_0^t\int_G A\nabla z\cdot\nabla\eta \,dx ds-\int_0^t\int_G a_1z\eta \,dx ds-\int_0^t\int_G zB\cdot\nabla\eta \,dxds\\
&\quad\,-\int_0^t\int_G a_2z\eta \,dx dW(s)-\int_0^t\int_\Gamma A_\Gamma\nabla_\Gamma z_\Gamma\cdot\nabla_\Gamma \eta_\Gamma \,d\sigma ds-\int_0^t\int_\Gamma b_1z_\Gamma\eta_\Gamma \,d\sigma ds\\
&\quad\,-\int_0^t\int_\Gamma z_\Gamma B_\Gamma \cdot\nabla_\Gamma\eta_\Gamma \,d\sigma ds-\int_0^t\int_\Gamma b_2z_\Gamma\eta_\Gamma \,d\sigma dW(s)\qquad \mathbb{P}\textnormal{-a.s.}
\end{align*}
Hence, we deduce that $(z,z_\Gamma)\in L^2_\mathcal{F}(0,T;\mathbb{H}^1)$ is the weak solution of \eqref{1.012}. Moreover, the desired inequality \eqref{deppconine} follows immediately from \eqref{2.811} and \eqref{2.812}. This completes the proof of Theorem \ref{thm2.1}.
\end{proof}
Now, let us define the weak solution of the following backward stochastic parabolic equation
\begin{equation}\label{2.70}
{\small\begin{cases}
			\begin{array}{ll}
				dy + \textnormal{div}(A\nabla y) \,dt = (a_1 y+a_2 Y +B\cdot\nabla y +f) \,dt + Y \,dW(t) &\textnormal{in}\,Q,\\
			 
 dy_\Gamma+\textnormal{div}_\Gamma(A_\Gamma\nabla_\Gamma y_\Gamma) \,dt-\partial_\nu^A y \,dt = (b_1y_\Gamma+b_2\widetilde{Y}+B_\Gamma\cdot\nabla_\Gamma y_\Gamma)\,dt+\widetilde{Y} \,dW(t) &\textnormal{on}\,\Sigma,\\
				y_\Gamma(t,x)=y\vert_\Gamma(t,x) &\textnormal{on}\,\Sigma,\\
				(y,y_\Gamma)\vert_{t=T}=(y_T,y_{\Gamma,T}) &\textnormal{in}\,G\times\Gamma,
			\end{array}
		\end{cases}}
\end{equation}
where $a_1, a_2\in L_\mathcal{F}^\infty(0,T;L^\infty(G))$, $B\in L_\mathcal{F}^\infty(0,T;L^\infty(G;\mathbb{R}^N))$, $b_1, b_2\in L_\mathcal{F}^\infty(0,T;L^\infty(\Gamma))$, $B_\Gamma\in L_\mathcal{F}^\infty(0,T;L^\infty(\Gamma;\mathbb{R}^N))$, $(y_T,y_{\Gamma,T})\in L^2_{\mathcal{F}_T}(\Omega;\mathbb{L}^2)$ and $f \in L^2_\mathcal{F}(0,T;L^2(G))$.
\begin{df}
The process 
$$(y,y_\Gamma,Y,\widetilde{Y})\in  L^2_\mathcal{F}(0,T;\mathbb{H}^1)\times L^2_\mathcal{F}(0,T;\mathbb{L}^2)$$ is said to be a weak solution of equation \eqref{2.70} if for any $(\eta,\eta_\Gamma)\in\mathbb{H}^1$ and all $t\in[0,T]$, it holds that
\begin{align*}
&\,\int_G (y_T-y(t))\eta \,dx + \int_\Gamma(y_{\Gamma,T}-y_{\Gamma}(t))\eta_\Gamma \,d\sigma\\
&=\int_t^T\int_G A\nabla y\cdot\nabla\eta \,dx ds+\int_t^T\int_G a_1y\eta \,dx ds+\int_t^T\int_G a_2Y\eta \,dx ds\\
&\quad\,+\int_t^T\int_G \eta B\cdot\nabla y \,dxds+\int_t^T\int_G f \eta \,dx ds+\int_t^T\int_G Y \eta \,dxdW(s)\\
&\quad\,+\int_t^T\int_\Gamma A_\Gamma\nabla_\Gamma y_\Gamma\cdot\nabla_\Gamma \eta_\Gamma \,d\sigma ds+\int_t^T\int_\Gamma b_1y_\Gamma\eta_\Gamma \,d\sigma ds+\int_t^T\int_\Gamma b_2\widetilde{Y}\eta_\Gamma \,d\sigma ds\\
&\quad\,+\int_t^T\int_\Gamma \eta_\Gamma B_\Gamma\cdot \nabla_\Gamma y_\Gamma \,d\sigma ds+\int_t^T\int_\Gamma \widetilde{Y}\eta_\Gamma \,d\sigma dW(s).
\end{align*}
\end{df}
Repeating some ideas from the proof of Theorem \ref{thm2.1}, and adapting the duality approach used in \cite{X. Zhou}, we obtain the following well-posedness result.
\begin{thm}
Let $a_1, a_2\in L_\mathcal{F}^\infty(0,T;L^\infty(G))$, $B\in L_\mathcal{F}^\infty(0,T;L^\infty(G;\mathbb{R}^N))$, $b_1, b_2\in L_\mathcal{F}^\infty(0,T;L^\infty(\Gamma))$, $B_\Gamma\in L_\mathcal{F}^\infty(0,T;L^\infty(\Gamma;\mathbb{R}^N))$, $f\in L_\mathcal{F}^2(0,T;L^2(G))$ and $(y_T,y_{\Gamma,T})\in L^2_{\mathcal{F}_T}(\Omega;\mathbb{L}^2)$. Then equation \eqref{2.70} $($and hence \eqref{1.1}$)$ admits a unique weak solution 
$$(y,y_\Gamma,Y,\widetilde{Y})\in L^2_\mathcal{F}(0,T;\mathbb{H}^1)\times L^2_\mathcal{F}(0,T;\mathbb{L}^2).$$
Furthermore,
\begin{align*}
&\,\vert(y,y_\Gamma)\vert_{L^2_\mathcal{F}(0,T;\mathbb{H}^1)}+\vert (Y,\widetilde{Y}) \vert_{L^2_\mathcal{F}(0,T;\mathbb{L}^2)} \\
&\leq C \big(\vert(y_T,y_{\Gamma,T})\vert_{L^2_{\mathcal{F}_T}(\Omega;\mathbb{L}^2)}+\vert f\vert_{L^2_\mathcal{F}(0,T;L^2(G))}\big).
 \end{align*}
\end{thm}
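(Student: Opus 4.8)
The plan is to mirror the proof of Theorem \ref{thm2.1}, producing the backward analogue of that argument. The central idea is a duality/transposition approach: the well-posedness of the forward equation \eqref{1.012} (Theorem \ref{thm2.1}) furnishes, for each source datum, a solution operator whose adjoint defines the pair $(y,y_\Gamma)$ for \eqref{2.70}, while the martingale representation theorem produces the processes $(Y,\widetilde Y)$. Concretely, first I would establish uniqueness: if $(y,y_\Gamma,Y,\widetilde Y)$ solves \eqref{2.70} with $(y_T,y_{\Gamma,T})=(0,0)$ and $f\equiv0$, apply the It\^o formula of Lemma \ref{lm1.1}(1) (and its boundary analogue) to $\vert y(t)\vert^2_{L^2(G)}+\vert y_\Gamma(t)\vert^2_{L^2(\Gamma)}$, integrating backward from $T$. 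Using the ellipticity of $A,A_\Gamma$, Young's inequality to absorb the gradient contributions coming from the $B\cdot\nabla y$ and $B_\Gamma\cdot\nabla_\Gamma y_\Gamma$ terms into the $-2\beta\vert\nabla y\vert^2$ (and boundary) terms, and the boundedness of $a_1,a_2,b_1,b_2$, one gets a backward Gronwall inequality forcing $(y,y_\Gamma)\equiv0$ and hence $(Y,\widetilde Y)\equiv0$ from the stochastic integral term.

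For existence, I would set up a Galerkin approximation exactly as in \eqref{2.7}--\eqref{2.8}: expand in the Hilbert basis $(e_i,e_{i,\Gamma})_{i\ge1}$ of $\mathbb{H}^1$ orthonormal in $\mathbb{L}^2$, and for each $n$ solve the resulting finite system of \emph{backward} stochastic differential equations with terminal data the projection of $(y_T,y_{\Gamma,T})$; existence and uniqueness for linear BSDEs with bounded coefficients is classical. Then derive the uniform a priori bound by applying It\^o to $\mathbb{E}\vert(y_n(t),y_{n,\Gamma}(t))\vert^2_{\mathbb{L}^2}$ backward from $T$, exactly as in \eqref{2.4159}--\eqref{2.811}, with the extra source term $f$ handled by Young's inequality ($2\vert f\,y\vert\le \vert f\vert^2+\vert y\vert^2$); Gronwall gives
\begin{align*}
\sup_{0\le t\le T}\mathbb{E}\vert(y_n(t),y_{n,\Gamma}(t))\vert^2_{\mathbb{L}^2}+\mathbb{E}\int_0^T\vert(y_n,y_{n,\Gamma})\vert^2_{\mathbb{H}^1}\,dt+\mathbb{E}\int_0^T\vert(Y_n,\widetilde Y_n)\vert^2_{\mathbb{L}^2}\,dt\le C\big(\mathbb{E}\vert(y_T,y_{\Gamma,T})\vert^2_{\mathbb{L}^2}+\vert f\vert^2_{L^2_\mathcal{F}(0,T;L^2(G))}\big).
\end{align*}
The bound on $(Y_n,\widetilde Y_n)$ in $L^2_\mathcal{F}(0,T;\mathbb{L}^2)$ is exactly what makes the backward case differ from the forward one, and it comes out of the same It\^o computation since the martingale term contributes $\mathbb{E}\int_0^T\vert(Y_n,\widetilde Y_n)\vert^2_{\mathbb{L}^2}\,dt$ with a favorable sign.

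From the uniform bounds, extract a weakly convergent subsequence $(y_n,y_{n,\Gamma})\rightharpoonup(y,y_\Gamma)$ in $L^2_\mathcal{F}(0,T;\mathbb{H}^1)$ and $(Y_n,\widetilde Y_n)\rightharpoonup(Y,\widetilde Y)$ in $L^2_\mathcal{F}(0,T;\mathbb{L}^2)$. Then pass to the limit in the weak (Galerkin) formulation tested against $\gamma(t)(e_i,e_{i,\Gamma})$ with $\gamma$ absolutely continuous, $\gamma(0)=0$ (the backward counterpart of the normalization $\gamma(T)=0$ used in Theorem \ref{thm2.1}), integrating the time derivative by parts; linearity of every term, together with the fact that $\eta\mapsto B\cdot\nabla y$ and $\eta_\Gamma\mapsto B_\Gamma\cdot\nabla_\Gamma y_\Gamma$ are continuous linear functionals on $\mathbb{H}^1$ against which weak convergence passes, yields the integral identity \eqref{2.1003}-type relation, and finally a cutoff $\gamma_\varepsilon$ concentrating near a fixed $t$ recovers the definition of weak solution of \eqref{2.70}. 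The final energy estimate is inherited from the uniform bound by weak lower semicontinuity of the norm. The main obstacle, and the only genuinely new point beyond transcribing the proof of Theorem \ref{thm2.1}, is correctly handling the $(Y,\widetilde Y)$ components: one must check that the Galerkin diffusion coefficients converge weakly to the right limits and that the stochastic integral terms $\int_0^t(Y_n,\eta)_{L^2(G)}\,dW(s)$ pass to the limit — which follows from the $L^2_\mathcal{F}$-weak convergence of $(Y_n,\widetilde Y_n)$ together with the It\^o isometry, since weak convergence of integrands implies weak convergence of the It\^o integrals in $L^2(\Omega)$.
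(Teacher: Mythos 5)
Your proposal is correct, but note that the paper gives no detailed argument for this theorem: it simply asserts that the result follows by ``repeating some ideas from the proof of Theorem \ref{thm2.1}'' and ``adapting the duality approach'' of Zhou, i.e.\ the intended construction is by transposition — the forward solution operator from Theorem \ref{thm2.1} together with the martingale representation theorem defines $(y,y_\Gamma,Y,\widetilde Y)$ by duality. You announce this duality idea in your opening paragraph but then actually carry out a different, self-contained route: a Galerkin scheme in the basis $(e_i,e_{i,\Gamma})$ of $\mathbb{H}^1$ in which each finite-dimensional projection is a linear BSDE with bounded adapted coefficients (Pardoux--Peng), whose second component furnishes $(Y_n,\widetilde Y_n)$, followed by the backward energy estimate and the weak-limit passage parallel to \eqref{2.8}--\eqref{2.811}. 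This is a legitimate alternative: it avoids invoking Zhou's transposition analysis at the price of redoing the approximation argument, and it produces the $(Y,\widetilde Y)$-estimate directly from the favorable sign of the quadratic-variation term in It\^o's formula, as you observe; the paper's route instead gets $(Y,\widetilde Y)$ from martingale representation and only then verifies the estimate. Two details you should make explicit if you write your version out: (i) the drift contains $a_2Y$ and $b_2\widetilde Y$, so in the energy estimate you must use Young's inequality to absorb $\tfrac12\mathbb{E}\int\vert Y\vert^2$ (resp.\ $\tfrac12\mathbb{E}\int\vert\widetilde Y\vert^2$) into the left-hand side produced by the It\^o correction, not merely cite boundedness of $a_2,b_2$; and (ii) the Galerkin identity holds $\mathbb{P}$-a.s.\ for each $n$, so the weak $L^2(\Omega)$-convergence of the stochastic integrals (via the It\^o isometry, as you say) should be combined with testing against bounded random variables to obtain the limiting identity almost surely — the same level of care as in the paper's own forward argument for Theorem \ref{thm2.1}.
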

\section{Global Carleman estimate for forward stochastic parabolic equations and proof of Theorem \ref{thm1.2}}\label{sec3}
This section is devoted to establishing a global Carleman estimate for forward stochastic parabolic equations with weak divergence source terms and dynamic boundary conditions. Then, we deduce the proof of Theorem \ref{thm1.2}.
\subsection{Global Carleman estimate}
In this subsection, we establish a Carleman estimate for the following in-homogeneous  forward stochastic parabolic equation:
	\begin{equation}\label{3.1}
		{\small\begin{cases}
			\begin{array}{ll}
				dz - \textnormal{div}(A\nabla z) \,dt = (F_0+\textnormal{div}(F)) \,dt + F_1 \,dW(t) &\textnormal{in}\,Q,\\
				dz_\Gamma-\textnormal{div}_\Gamma(A_\Gamma\nabla_\Gamma z_\Gamma) \,dt+\partial^A_\nu z \,dt = (F_{0,\Gamma}-F\cdot\nu+\textnormal{div}_\Gamma(F_\Gamma)) \,dt\\
    \hspace{5cm}\;+ F_{1,\Gamma} \,dW(t) &\textnormal{on}\,\Sigma,\\
				z_\Gamma(t,x)=z\vert_\Gamma(t,x) &\textnormal{on}\,\Sigma,\\
				(z,z_\Gamma)\vert_{t=0}=(z_0,z_{\Gamma,0}) &\textnormal{in}\,G\times\Gamma,
			\end{array}
		\end{cases}}
	\end{equation}
	where $(z_0,z_{\Gamma,0})\in L^2_{\mathcal{F}_0}(\Omega;\mathbb{L}^2)$ is the initial state and the coefficients $F_0, F_1\in L^2_\mathcal{F}(0,T;L^2(G))$, $F\in L^2_\mathcal{F}(0,T;L^2(G,\mathbb{R}^N))$, $F_{0,\Gamma}, F_{1,\Gamma}\in L^2_\mathcal{F}(0,T;L^2(\Gamma))$, $F_\Gamma\in L^2_\mathcal{F}(0,T;L^2(\Gamma;\mathbb{R}^N))$. Similarly to the proof of Theorem \ref{thm2.1}, one can show also that the system \eqref{3.1} is well-posed i.e., there exists a unique weak solution
 $$(z,z_\Gamma)\in L^2_\mathcal{F}(0,T;\mathbb{H}^1)$$ of equation \eqref{3.1} so that
\begin{align*}
\vert(z,z_\Gamma)\vert_{L^2_\mathcal{F}(0,T;\mathbb{H}^1)}&\leq C\, \big(\vert(z_0,z_{\Gamma,0})\vert_{L^2_{\mathcal{F}_0}(\Omega;\mathbb{L}^2)}+\vert F_0\vert_{L^2_\mathcal{F}(0,T;L^2(G))}+\vert F_1\vert_{L^2_\mathcal{F}(0,T;L^2(G))}\\
&\quad\,\quad\;\,+\vert F\vert_{L^2_\mathcal{F}(0,T;L^2(G;\mathbb{R}^N))}+\vert F_{0,\Gamma}\vert_{L^2_\mathcal{F}(0,T;L^2(\Gamma))}\\
&\quad\,\quad\;\,+\vert F_{1,\Gamma}\vert_{L^2_\mathcal{F}(0,T;L^2(\Gamma))}+\vert F_\Gamma\vert_{L^2_\mathcal{F}(0,T;L^2(\Gamma;\mathbb{R}^N))}\big).
\end{align*}

We will need the following technical lemma due to Imanuvilov. For the proof, we refer to \cite{fursikov1996controllability}.
\begin{lm}\label{lm3.1}
		For any nonempty open subset $G_1\Subset G$, there is a function $\psi\in C^2(\overline{G})$ such that
		$$
		\psi>0,\,\, \textnormal{in} \,\,G\,;\qquad \psi=0,\,\,\, \textnormal{on} \,\,\Gamma;\qquad \nabla\psi\neq0 \,\,\,\,\textnormal{in}\,\,\overline{G\setminus G_1}.
		$$
  \end{lm}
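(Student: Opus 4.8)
The plan is to construct $\psi$ in two stages: first produce a $C^2$ function on $\overline G$ with the correct behaviour on and near $\Gamma$, and then remove the remaining interior critical points by a diffeomorphism of $\overline G$ that concentrates all of them inside the prescribed set $G_1$.

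First I would take $\psi_0\in C^2(\overline G)$ to be the solution of the elliptic Dirichlet problem $-\Delta\psi_0=1$ in $G$, $\psi_0=0$ on $\Gamma$; the regularity $\psi_0\in C^2(\overline G)$ follows from standard elliptic regularity together with the $C^2$ smoothness of $\Gamma$. The maximum principle yields $\psi_0>0$ in $G$, and Hopf's boundary point lemma yields $\partial_\nu\psi_0<0$ on $\Gamma$, so there is a neighbourhood $V$ of $\Gamma$ in $\overline G$ on which $\nabla\psi_0\neq 0$. Consequently the critical set $\mathcal C:=\{x\in G:\nabla\psi_0(x)=0\}$ is contained in the compact set $\overline G\setminus V$, hence $\overline{\mathcal C}$ is a compact subset of $G$.

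Next I would replace $\psi_0$, after a standard arbitrarily small correction (adding a suitable compactly supported perturbation that changes neither the sign of $\psi_0$ nor the property $\nabla\psi_0\neq0$ on $V$, and invoking Sard's theorem to make all interior critical points nondegenerate), by a Morse function on $G$, still denoted $\psi_0$; then $\mathcal C=\{x_1,\dots,x_m\}$ is finite. Since $G$ is connected, there is a $C^2$ diffeomorphism $\Phi:\overline G\to\overline G$ equal to the identity in a neighbourhood of $\Gamma$ with $\Phi(x_j)\in G_1$ for every $j$ (the isotopy sweeping each critical point into $G_1$ may be taken compactly supported in $G$). I then set $\psi:=\psi_0\circ\Phi^{-1}\in C^2(\overline G)$.

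It remains to check the three required properties. Since $\Phi$ maps $G$ onto $G$ and $\Gamma$ onto $\Gamma$, we get $\psi>0$ in $G$ and $\psi=0$ on $\Gamma$. For the gradient, $\nabla\psi(y)=\bigl[D\Phi^{-1}(y)\bigr]^{\mathsf T}(\nabla\psi_0)\bigl(\Phi^{-1}(y)\bigr)$ with $D\Phi^{-1}(y)$ invertible for every $y$, so $\nabla\psi(y)=0$ if and only if $\Phi^{-1}(y)\in\mathcal C$, i.e. $y\in\{\Phi(x_1),\dots,\Phi(x_m)\}\subset G_1$; hence $\nabla\psi\neq0$ on $\overline G\setminus G_1$, and in particular on $\overline{G\setminus G_1}$ (near $\Gamma$ this is also immediate since there $\psi=\psi_0$). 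The main obstacle is the second stage: obtaining a diffeomorphism of $\overline G$ that sweeps \emph{all} interior critical points into the fixed but arbitrarily small set $G_1$ without creating new ones — this is exactly where connectedness of $G$ is used (an interior maximum forces at least one critical point, which must land in $G_1$), and the non-creation of critical points is automatic because $D\Phi$ is everywhere nonsingular.
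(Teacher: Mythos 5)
The paper does not actually prove this lemma --- it is quoted from Fursikov--Imanuvilov \cite{fursikov1996controllability} --- and your two-stage construction (a positive function vanishing on $\Gamma$ with nonvanishing gradient near $\Gamma$, perturbed to a Morse function and then composed with a compactly supported diffeomorphism sweeping the finitely many critical points into $G_1$) is essentially that standard proof, and the final verification of the three properties is correct. The one step to tighten is the first stage: with $\Gamma$ only $C^2$, standard elliptic regularity for $-\Delta\psi_0=1$, $\psi_0=0$ on $\Gamma$ gives $\psi_0\in C^{1,\alpha}(\overline G)\cap W^{2,p}(G)$ but not in general $\psi_0\in C^2(\overline G)$ (Schauder needs a $C^{2,\alpha}$, or at least Dini, boundary); this is harmless, since you may instead start from the signed distance to $\Gamma$, which is $C^2$ near a $C^2$ boundary, extended to a positive $C^2$ function on $\overline G$, for which positivity, vanishing on $\Gamma$ and $\nabla\psi_0\neq0$ near $\Gamma$ hold by construction. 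Similarly, the ``compactly supported Sard perturbation'' should be spelled out as a cutoff-localized linear perturbation (cutoff equal to one near the critical set, vanishing near $\Gamma$, and small in $C^1$ so no critical points are created in the transition region where $|\nabla\psi_0|$ is bounded below); with these routine adjustments your argument is complete and coincides with the cited one.
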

		Since $\vert\nabla\psi\vert^2=\vert\nabla_\Gamma\psi\vert^2+\vert\partial_\nu\psi\vert^2$ on $\Gamma$, the function $\psi$ satisfies also that
		$$\nabla_\Gamma\psi=0\,,\qquad \vert\nabla\psi\vert=\vert\partial_\nu\psi\vert\,,\qquad\partial_\nu\psi\leq -c<0\,\,\,\textnormal{on}\,\,\Gamma, \,\,\textnormal{for some}\,\,c>0.
		$$
  
For any parameters $\lambda>1$ and $\mu>1$, we choose the following weight functions
 \begin{equation}\label{3.2}
\begin{array}{l}
\alpha(t,x) = (t(T-t))^{-1}\,(e^{\mu\psi(x)}-e^{2\mu\vert\psi\vert_\infty}),\quad \theta=e^{\lambda\alpha},\\\\
\hspace{1.6cm}\varphi(t,x) = (t(T-t))^{-1}\,e^{\mu\psi(x)},
\end{array}
\end{equation}
 for $x\in\overline{G}$ and $t\in(0,T)$. Moreover, the weights are constant on the boundary $\Gamma$ so that
\begin{equation}\label{3.3010}
\nabla_\Gamma\alpha=0\qquad\textnormal{and}\qquad\nabla_\Gamma\varphi=0\qquad\textnormal{on}\,\,\Gamma.
\end{equation}
It is easy to check that for some suitable constant $C>0$ depending only on $G$ and $G_0$, for all $(t,x)\in\overline{Q}$, we have
\begin{align}\label{3.3}
\begin{aligned}
&\,\varphi(t,x)\geq CT^{-2},\qquad\vert\varphi_t(t,x)\vert\leq CT\varphi^2(t,x),\qquad\vert\varphi_{tt}(t,x)\vert\leq CT^2\varphi^3(t,x),\\
&\vert\alpha_t(t,x)\vert\leq CTe^{2\mu\vert\psi\vert_\infty}\varphi^2(t,x),\qquad\vert\alpha_{tt}(t,x)\vert\leq CT^2e^{2\mu\vert\psi\vert_\infty}\varphi^3(t,x).
		\end{aligned}
	\end{align}
Recalling the known Carleman estimate for deterministic parabolic equations with dynamic boundary conditions (see, e.g., \cite{lipsh, khoutaibi2020null}), one can conclude the following Carleman estimate for the random parabolic equation \eqref{3.1} with $F\equiv F_1\equiv F_\Gamma\equiv F_{1,\Gamma}\equiv0$.
\begin{lm}\label{lm3.2}
There exist constants $C>0$ and $\mu_0$ depending only on $G$, $G_0$, $A$ and $A_\Gamma$ such that for all $F_0\in L^2_\mathcal{F}(0,T;L^2(G))$, $F_{0,\Gamma}\in L^2_\mathcal{F}(0,T;L^2(\Gamma))$ and $(z_0,z_{\Gamma,0})\in L^2_{\mathcal{F}_0}(\Omega;\mathbb{L}^2)$, the weak solution $(z,z_\Gamma)\in L^2_\mathcal{F}(\Omega;C([0,T];\mathbb{L}^2))\cap L^2_\mathcal{F}(0,T;\mathbb{H}^1)$ of \eqref{3.1} (with $F\equiv F_1\equiv F_\Gamma\equiv F_{1,\Gamma}\equiv0$) satisfies that  
\begin{align}
\begin{aligned}
&\,\lambda^3\mu^4\mathbb{E}\int_Q \theta^2\varphi^3z^2\,dxdt + \lambda^3\mu^3\mathbb{E}\int_\Sigma \theta^2\varphi^3z_\Gamma^2\,d\sigma dt\\
&+\lambda\mu^2\mathbb{E}\int_Q \theta^2\varphi \vert\nabla z\vert^2\,dxdt + \lambda\mu\mathbb{E}\int_\Sigma \theta^2\varphi \vert\nabla_\Gamma z_\Gamma\vert^2 \,d\sigma dt\\
&\label{3.40}\leq C \Bigg[ \lambda^3\mu^4\mathbb{E}\int_{Q_0} \theta^2\varphi^3 z^2 \,dxdt + \mathbb{E}\int_Q \theta^2 F_0^2 \,dxdt+ \mathbb{E}\int_\Sigma \theta^2 F_{0,\Gamma}^2 \,d\sigma dt\Bigg],
\end{aligned}
\end{align}
for all $\mu\geq\mu_0$ and $\lambda\geq C(T+T^2)$.
\end{lm}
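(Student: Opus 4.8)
The plan is to reduce the claim to the already-established global Carleman estimate for \emph{deterministic} parabolic equations with dynamic boundary conditions. The crucial observation is that once $F\equiv F_1\equiv F_\Gamma\equiv F_{1,\Gamma}\equiv0$, equation \eqref{3.1} carries no stochastic integral term; hence for $\mathbb{P}$-a.e.\ $\omega\in\Omega$ its pathwise realization is a genuine deterministic parabolic system, with coefficients that are random but, for fixed $\omega$, deterministic.

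Concretely, I would fix $\omega$ outside a $\mathbb{P}$-null set on which the coefficient bounds fail. Since the $dW$-term is absent, $t\mapsto(z(t,\cdot,\omega),z_\Gamma(t,\cdot,\omega))$ is absolutely continuous with values in $(\mathbb{H}^1)'$, and by the well-posedness and energy estimate recalled above one has $(z,z_\Gamma)(\cdot,\cdot,\omega)\in C([0,T];\mathbb{L}^2)\cap L^2(0,T;\mathbb{H}^1)$ for a.e.\ $\omega$, so that $(z,z_\Gamma)(\cdot,\cdot,\omega)$ is a weak solution of
\begin{equation*}
\begin{cases}
\partial_t z-\textnormal{div}(A(\omega,\cdot,\cdot)\nabla z)=F_0(\omega,\cdot,\cdot) &\textnormal{in }Q,\\
\partial_t z_\Gamma-\textnormal{div}_\Gamma(A_\Gamma(\omega,\cdot,\cdot)\nabla_\Gamma z_\Gamma)+\partial_\nu^A z=F_{0,\Gamma}(\omega,\cdot,\cdot) &\textnormal{on }\Sigma,\\
z_\Gamma=z\vert_\Gamma &\textnormal{on }\Sigma,\\
(z,z_\Gamma)\vert_{t=0}=(z_0(\omega),z_{\Gamma,0}(\omega)) &\textnormal{in }G\times\Gamma.
\end{cases}
\end{equation*}
The matrices $A(\omega,\cdot,\cdot),A_\Gamma(\omega,\cdot,\cdot)$ satisfy the symmetry, $C^1([0,T];W^{1,\infty})$-regularity and coercivity (with the same constant $\beta$) hypotheses, with bounds independent of $\omega$, and the weights $\alpha,\varphi,\theta$ of \eqref{3.2} are precisely the standard Fursikov--Imanuvilov weights built from the function $\psi$ of Lemma \ref{lm3.1}. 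Hence the deterministic Carleman estimate of \cite{lipsh,khoutaibi2020null} applies verbatim (after relabeling its parameters $(s,\lambda)$ as $(\lambda,\mu)$), giving constants $C>0$, $\mu_0\ge1$ and a threshold $\lambda\ge C(T+T^2)$, $\mu\ge\mu_0$ depending only on $G,G_0$ and the (uniform-in-$\omega$) ellipticity and regularity data of $A,A_\Gamma$, so that \eqref{3.40} holds with $\mathbb{E}$ deleted and each space-time integral replaced by its pathwise value.

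It then suffices to take $\mathbb{E}$ on both sides of the pathwise inequality: all integrands are nonnegative, so Tonelli's theorem permits interchanging $\mathbb{E}$ with the $dx\,dt$ and $d\sigma\,dt$ integrals, and the resulting right-hand side is finite because $\theta\le1$ on $Q$ (as $\alpha<0$), $z\in L^2_\mathcal{F}(0,T;\mathbb{H}^1)$ and $F_0,F_{0,\Gamma}$ lie in their respective $L^2_\mathcal{F}$-spaces, while $\theta^2\varphi^3$ extends continuously to $\overline{Q}$ (vanishing as $t\to0^+,T^-$) and is therefore bounded. This is exactly \eqref{3.40}. The only point that needs care is the bookkeeping underlying this pathwise-then-integrate argument: one must know that the solution is jointly measurable in $(\omega,t,x)$, which follows from its construction as an $L^2_\mathcal{F}$-limit of the Galerkin approximations in the proof of Theorem \ref{thm2.1}, so that the $\omega$-pointwise Carleman inequality is legitimately integrable, and that the constants $C,\mu_0$ and the lower bound on $\lambda$ furnished by the deterministic estimate are genuinely $\omega$-free — which they are, since in its proof they depend only on the quantities listed above, all bounded uniformly over $\Omega$ by the standing assumptions on $A$ and $A_\Gamma$. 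Everything else is a direct transcription of the deterministic result.
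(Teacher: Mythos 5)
Your argument is exactly the one the paper intends: since the lemma's hypotheses remove all stochastic integral terms, the paper simply invokes the known deterministic Carleman estimate with dynamic boundary conditions from \cite{lipsh,khoutaibi2020null} pathwise (with constants uniform in $\omega$ by the standing assumptions on $A$, $A_\Gamma$) and takes expectation, which is precisely your reduction. Your additional care about joint measurability, Tonelli, and finiteness of the right-hand side only makes explicit what the paper leaves implicit, so the proposal is correct and essentially identical in approach.
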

Throughout this subsection, we set $\mu=\mu_0$ and $\lambda\geq C(T+T^2)$ given in Lemma \ref{lm3.2} such that the Carleman estimate \eqref{3.40} holds. Now, let us consider the following controlled backward stochastic parabolic equation
	\begin{equation}\label{3.5}
		\begin{cases}
			\begin{array}{ll}
				dr + \textnormal{div}(A\nabla r) \,dt = (\lambda^3\theta^2\varphi^3z + \mathbbm{1}_{G_0}v) \,dt + R_1 \,dW(t) &\textnormal{in}\,\,Q,\\
				dr_\Gamma+\textnormal{div}_\Gamma(A_\Gamma\nabla_\Gamma r_\Gamma) \,dt-\partial^A_\nu r \,dt = \lambda^3\theta^2\varphi^3z_\Gamma \,dt+R_2 \,dW(t) &\textnormal{on}\,\,\Sigma,\\
				r_\Gamma(t,x)=r\vert_\Gamma(t,x) &\textnormal{on}\,\,\Sigma,\\
				(r,r_\Gamma)\vert_{t=T}=(0,0) &\textnormal{in}\,\,G\times\Gamma,
			\end{array}
		\end{cases}
	\end{equation}
	where $(z,z_\Gamma)\in L^2_\mathcal{F}(\Omega;C([0,T];\mathbb{L}^2))\cap L^2_\mathcal{F} (0,T;\mathbb{H}^1)$ is the weak solution of \eqref{3.1}, $v\in L^2_\mathcal{F}(0,T;L^2(G_0))$ is the control function and $(r,r_\Gamma,R_1,R_2)$ is the state variable.

We have the following controllability result of system \eqref{3.5} which will be the key tool to establish our global Carleman estimate for equation \eqref{3.1}.
\begin{prop}\label{prop3.1}
There exists a control $\hat{v}\in L^2_\mathcal{F}(0,T;L^2(G_0))$ such that the corresponding solution $(\hat{r},\hat{r}_\Gamma,\hat{R_1},\hat{R_2})\in \big(L^2_\mathcal{F}(\Omega;C([0,T];\mathbb{L}^2))\cap L^2_\mathcal{F}(0,T;\mathbb{H}^1)\big)\times L^2_\mathcal{F}(0,T;\mathbb{L}^2)$ to \eqref{3.5} satisfies that 
$$(\hat{r}(0,\cdot),\hat{r}_\Gamma(0,\cdot))=(0,0)\;\, \textnormal{in} \;\,G\times\Gamma,\quad\mathbb{P}\textnormal{-a.s}.$$
Moreover, there exists $C=C(G,G_0,\mu_0,A,A_\Gamma)>0$ such that 
\begin{align}
    \begin{aligned}
&\,\lambda^{-3} \mathbb{E}\int_{Q} \theta^{-2}\varphi^{-3}\hat{v}^2 \,dxdt +\mathbb{E}\int_Q \theta^{-2}\hat{r}^2 \,dxdt+\mathbb{E}\int_\Sigma \theta^{-2}\hat{r}_\Gamma^2 \,d\sigma dt\\
&+\lambda^{-2}\mathbb{E}\int_Q \theta^{-2}\varphi^{-2}\vert\nabla\hat{r}\vert^2\,dxdt+\lambda^{-2}\mathbb{E}\int_\Sigma \theta^{-2}\varphi^{-2}\vert\nabla_\Gamma\hat{r}_\Gamma\vert^2\,d\sigma dt\\
  &+\lambda^{-2}\mathbb{E}\int_Q \theta^{-2}\varphi^{-2}\hat{R}_1^2\,dxdt+\lambda^{-2}\mathbb{E}\int_\Sigma \theta^{-2}\varphi^{-2}\hat{R}_2^2\,d\sigma dt \\
  &\label{3.6}
\leq C \Bigg[\lambda^3 \mathbb{E}\int_Q \theta^2\varphi^3 z^2 \,dxdt+\lambda^3\mathbb{E}\int_\Sigma \theta^2\varphi^3 z_\Gamma^2 \,d\sigma dt\Bigg],
       \end{aligned}
\end{align}
for all $\lambda\geq C(T+T^2)$.
\end{prop}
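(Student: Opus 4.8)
The plan is to argue by duality, in the spirit of the Fursikov--Imanuvilov penalized HUM method adapted to the backward stochastic setting. First I would identify the adjoint of \eqref{3.5}. Testing \eqref{3.5} against a forward process $(p,p_\Gamma)$ solving
\begin{equation*}
\begin{cases}
dp-\textnormal{div}(A\nabla p)\,dt=0 & \textnormal{in }Q,\\
dp_\Gamma-\textnormal{div}_\Gamma(A_\Gamma\nabla_\Gamma p_\Gamma)\,dt+\partial^A_\nu p\,dt=0 & \textnormal{on }\Sigma,\\
p_\Gamma=p\vert_\Gamma & \textnormal{on }\Sigma,\\
(p,p_\Gamma)\vert_{t=0}=(p_0,p_{\Gamma,0}) & \textnormal{in }G\times\Gamma,
\end{cases}
\end{equation*}
with $(p_0,p_{\Gamma,0})\in L^2_{\mathcal{F}_0}(\Omega;\mathbb{L}^2)$, and applying Itô's formula (Lemma \ref{lm1.1} together with its boundary analogue) to $t\mapsto(r(t),p(t))_{L^2(G)}+(r_\Gamma(t),p_\Gamma(t))_{L^2(\Gamma)}$, the $\nabla$–$\nabla$ terms and the co-normal derivative terms produced by the two integrations by parts cancel, leaving, for the solution $(r,r_\Gamma,R_1,R_2)$ of \eqref{3.5} with control $v$, the duality identity
\begin{equation*}
\mathbb{E}\big[(r(0),p_0)_{L^2(G)}+(r_\Gamma(0),p_{\Gamma,0})_{L^2(\Gamma)}\big]=-\lambda^3\mathbb{E}\!\int_Q\theta^2\varphi^3 zp\,dxdt-\lambda^3\mathbb{E}\!\int_\Sigma\theta^2\varphi^3 z_\Gamma p_\Gamma\,d\sigma dt-\mathbb{E}\!\int_{Q_0}vp\,dxdt.
\end{equation*}
Crucially, this adjoint system is exactly \eqref{3.1} with all source terms equal to zero, so the classical Carleman estimate of Lemma \ref{lm3.2} applies to $(p,p_\Gamma)$ with $F_0\equiv F_{0,\Gamma}\equiv0$.

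Next I would run the minimization. Fix $\mu=\mu_0$ and $\lambda\geq C(T+T^2)$ as in Lemma \ref{lm3.2}, and let $\mathcal{H}$ be the completion of $L^2_{\mathcal{F}_0}(\Omega;\mathbb{L}^2)$ for the norm $\|(p_0,p_{\Gamma,0})\|_{\mathcal{H}}^2:=\lambda^3\mathbb{E}\int_{Q_0}\theta^2\varphi^3 p^2\,dxdt$; this is genuinely a norm since, by \eqref{3.40} with vanishing sources, its nullity forces $(p,p_\Gamma)\equiv0$ on $\overline Q$ and hence $(p_0,p_{\Gamma,0})=0$. On $\mathcal{H}$ I would consider
\begin{equation*}
J(p_0,p_{\Gamma,0}):=\tfrac12\,\lambda^3\mathbb{E}\!\int_{Q_0}\theta^2\varphi^3 p^2\,dxdt+\lambda^3\mathbb{E}\!\int_Q\theta^2\varphi^3 zp\,dxdt+\lambda^3\mathbb{E}\!\int_\Sigma\theta^2\varphi^3 z_\Gamma p_\Gamma\,d\sigma dt.
\end{equation*}
By Cauchy--Schwarz and Lemma \ref{lm3.2} (which bounds $\lambda^3\mathbb{E}\int_Q\theta^2\varphi^3 p^2$ and $\lambda^3\mathbb{E}\int_\Sigma\theta^2\varphi^3 p_\Gamma^2$ by $C\|(p_0,p_{\Gamma,0})\|_{\mathcal{H}}^2$), $J$ is continuous, strictly convex and coercive, hence has a unique minimizer $(\hat p_0,\hat p_{\Gamma,0})\in\mathcal{H}$ with state $(\hat p,\hat p_\Gamma)$. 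Setting $\hat v:=\mathbbm{1}_{G_0}\lambda^3\theta^2\varphi^3\hat p$, the Euler--Lagrange identity $\langle(\hat p_0,\hat p_{\Gamma,0}),\cdot\rangle_{\mathcal{H}}+J'_{\mathrm{lin}}(\cdot)=0$ tested against any $(p_0,p_{\Gamma,0})\in L^2_{\mathcal{F}_0}(\Omega;\mathbb{L}^2)\subset\mathcal{H}$ reads $\mathbb{E}\int_{Q_0}\hat v p+\lambda^3\mathbb{E}\int_Q\theta^2\varphi^3 zp+\lambda^3\mathbb{E}\int_\Sigma\theta^2\varphi^3 z_\Gamma p_\Gamma=0$; combined with the duality identity this gives $\mathbb{E}[(r(0),p_0)+(r_\Gamma(0),p_{\Gamma,0})]=0$ for all such test data, i.e. $(\hat r(0,\cdot),\hat r_\Gamma(0,\cdot))=(0,0)$ $\mathbb{P}$-a.s., where $(\hat r,\hat r_\Gamma,\hat R_1,\hat R_2)$ is the solution of \eqref{3.5} with $v=\hat v$ (which lies in the asserted space by the backward well-posedness of Section \ref{sec2}, since $\hat v$ and $\lambda^3\theta^2\varphi^3 z$ are in $L^2_\mathcal{F}$, $\theta^2\varphi^3$ being bounded on $\overline Q$).

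For the cost, comparing with the trivial competitor yields $J(\hat p_0,\hat p_{\Gamma,0})\leq J(0,0)=0$, so Cauchy--Schwarz and Lemma \ref{lm3.2} applied to $(\hat p,\hat p_\Gamma)$ absorb the linear part and give
\begin{equation*}
\lambda^{-3}\mathbb{E}\!\int_Q\theta^{-2}\varphi^{-3}\hat v^2\,dxdt=\lambda^3\mathbb{E}\!\int_{Q_0}\theta^2\varphi^3\hat p^2\,dxdt\leq C\Big[\lambda^3\mathbb{E}\!\int_Q\theta^2\varphi^3 z^2\,dxdt+\lambda^3\mathbb{E}\!\int_\Sigma\theta^2\varphi^3 z_\Gamma^2\,d\sigma dt\Big],
\end{equation*}
which is the first block of \eqref{3.6}. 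It then remains to estimate $(\hat r,\hat r_\Gamma,\hat R_1,\hat R_2)$ in the weighted norms of \eqref{3.6}, and this is where I expect the main difficulty to lie. The plan is to apply Itô's formula to $t\mapsto\mathbb{E}[\ell(t)(\vert\hat r(t)\vert^2_{L^2(G)}+\vert\hat r_\Gamma(t)\vert^2_{L^2(\Gamma)})]$ for a suitable positive time-weight $\ell$ comparable to $\theta^{-2}\varphi^{-2}$ that vanishes at $t=0,T$: the elliptic parts produce the coercive contributions $\int_Q\ell A\nabla\hat r\cdot\nabla\hat r$ and $\int_\Sigma\ell A_\Gamma\nabla_\Gamma\hat r_\Gamma\cdot\nabla_\Gamma\hat r_\Gamma$, the co-normal derivative terms again cancel between bulk and surface, the quadratic variation supplies $\int_Q\ell\hat R_1^2+\int_\Sigma\ell\hat R_2^2$ with the favourable sign, and the source $\lambda^3\theta^2\varphi^3 z$ (resp. $\lambda^3\theta^2\varphi^3 z_\Gamma$) together with $\hat v$ are handled by Young's inequality using the cost bound just obtained and the pointwise bounds \eqref{3.3} on $\theta,\varphi$ and their time derivatives. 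The delicate points are choosing $\ell$ so that the drift term $\dot\ell$ is absorbed by the dissipation, so that the resulting powers of $\varphi$ match those in \eqref{3.6}, and so that the degeneracy of the weights at $t=0,T$ is compatible with the super-exponentially small behaviour of the source (and hence of $\hat r,\hat r_\Gamma$) near those times; the duality and minimization steps above are otherwise routine.
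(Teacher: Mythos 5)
Your duality/HUM half is sound and is a genuinely different route from the paper's for that part: you work with the homogeneous random adjoint system, minimize a classical HUM functional over the completion $\mathcal{H}$, and read off $\hat v=\mathbbm{1}_{G_0}\lambda^3\theta^2\varphi^3\hat p$, the null condition $(\hat r(0),\hat r_\Gamma(0))=(0,0)$, and the bound $\lambda^{-3}\mathbb{E}\int_Q\theta^{-2}\varphi^{-3}\hat v^2\le C\big[\lambda^3\mathbb{E}\int_Q\theta^2\varphi^3z^2+\lambda^3\mathbb{E}\int_\Sigma\theta^2\varphi^3z_\Gamma^2\big]$ from $J(\hat p_0,\hat p_{\Gamma,0})\le J(0)=0$ together with the Carleman estimate \eqref{3.40}. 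The paper instead penalizes: its functional $J_\varepsilon$ contains, besides the control cost, the weighted state norms $\frac12\mathbb{E}\int_Q\theta_\varepsilon^{-2}r^2+\frac12\mathbb{E}\int_\Sigma\theta_\varepsilon^{-2}r_\Gamma^2$ and the terms $\frac{1}{2\varepsilon}\mathbb{E}\int_G r^2(0)+\frac{1}{2\varepsilon}\mathbb{E}\int_\Gamma r_\Gamma^2(0)$, with the weight regularized in time ($\theta_\varepsilon$, built from $(t+\varepsilon)(T-t+\varepsilon)$, is bounded), so that the duality identity \eqref{3.105} directly delivers uniform bounds on $\mathbb{E}\int_Q\theta_\varepsilon^{-2}r_\varepsilon^2$, $\mathbb{E}\int_\Sigma\theta_\varepsilon^{-2}r_{\varepsilon,\Gamma}^2$ and $\varepsilon^{-1}\mathbb{E}|r_\varepsilon(0)|^2$, after which a weighted energy identity gives the gradient and $R_{\varepsilon,1},R_{\varepsilon,2}$ bounds, and everything passes to the limit $\varepsilon\to0$ by weak compactness (\eqref{3.15}--\eqref{3.16}).

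The gap is in the second half of \eqref{3.6}. Your minimization controls only the control cost, not the weighted state norms $\mathbb{E}\int_Q\theta^{-2}\hat r^2$ and $\mathbb{E}\int_\Sigma\theta^{-2}\hat r_\Gamma^2$, and your sketched energy argument cannot produce them. First, the weight is misdescribed: since $\alpha<0$, $\theta^{-2}=e^{-2\lambda\alpha}$ blows up super-exponentially as $t\to0^+,T^-$, so $\theta^{-2}\varphi^{-2}$ does \emph{not} vanish at the endpoints; the boundary terms in your Itô computation are of the indeterminate form $0\cdot\infty$ and cannot be discarded without a quantitative a priori decay of $\hat r,\hat r_\Gamma$ near $t=0,T$, which you have not established (and which is essentially what \eqref{3.6} asserts). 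Second, and more structurally, the time derivative of the weight satisfies $|(\theta^{-2}\varphi^{-2})_t|\le CT\lambda\,\theta^{-2}$ (cf. \eqref{3.14}), so the drift term in your energy identity is of size $T\lambda\,\mathbb{E}\int_Q\theta^{-2}\hat r^2$ plus its surface analogue; this is a zeroth-order term that the dissipation (gradient terms) cannot absorb, and it is exactly the quantity you never bound. This is why the paper's functional includes the weighted state norms: in its scheme these terms appear on the left of the duality identity and are therefore estimated \emph{before} the energy step \eqref{3.13}--\eqref{3.1502}, which then only needs to handle gradients and the martingale terms. As written, your proposal proves the controllability statement and the first term of \eqref{3.6}, but not the remaining six terms; to close it you would either need to enrich the functional with the weighted state norms (and then regularize the weight in time, since for a generic competitor $r(0)\neq0$ makes $\mathbb{E}\int_Q\theta^{-2}r^2=+\infty$), i.e.\ essentially reproduce the paper's penalized construction, or supply an independent argument for the $\theta^{-2}$-weighted state bounds, which is missing.
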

\begin{proof}
The idea of the proof using the so-called penalized Hilbert Uniqueness method, which is introduced in \cite{GloLions}. We first construct a family of optimal approximate-null controls for equation \eqref{3.5} and get a uniform estimate for approximate controls and solutions w.r.t the source terms. Hence, by using some limit arguments, we prove the controllability result with the desired estimate. This method called the duality method and it was used successfully in \cite{imanuvilov2003carleman} with Dirichlet boundary conditions. Let $\varepsilon>0$, and consider the weight function 
$$\alpha_\varepsilon\equiv\alpha_\varepsilon(t,x) = ((t+\varepsilon)(T-t+\varepsilon)) ^{-1}\,(e^{\mu_0\psi(x)}-e^{2\mu_0\vert\psi\vert_\infty}).$$ 
Put $\theta_\varepsilon=e^{\lambda\alpha_\varepsilon}$, and introduce the following minimization problem 
\begin{equation}\label{3.770}
    \inf\{J_\varepsilon(v)\,,\,\,\,v\in\mathcal{V}\},
\end{equation}
where
\begin{align*}
\begin{aligned}
J_\varepsilon(v)=&\,\frac{1}{2}\mathbb{E}\int_Q \theta_\varepsilon^{-2} r^2 \,dxdt+\frac{1}{2}\mathbb{E}\int_\Sigma \theta_\varepsilon^{-2} r_\Gamma^2 \,d\sigma dt+\frac{1}{2}\mathbb{E}\int_{Q_0} \lambda^{-3}\theta^{-2}\varphi^{-3}v^2 \,dxdt\\
&+\frac{1}{2\varepsilon}\mathbb{E}\int_G r^2(0) dx+\frac{1}{2\varepsilon}\mathbb{E}\int_\Gamma r_\Gamma^2(0) d\sigma,
\end{aligned}
\end{align*}
and 
$$\mathcal{V}=\bigg\{v\in L^2_\mathcal{F}(0,T;L^2(G_0))\,,\,\,\quad\mathbb{E}\int_{Q_0} \theta^{-2}\varphi^{-3}v^2 \,dxdt<\infty\bigg\}.$$
It is easy to see that the functional $J_\varepsilon$ is well-defined, continuous, strictly convex, and coercive. Then, the minimization problem \eqref{3.770} admits a unique optimal solution $v_\varepsilon$. By the Euler-Lagrange equation and the duality system (see, e.g., \cite{lions1972some}), the control $v_\varepsilon$ can be characterized as
		\begin{equation}\label{3.8}
			v_\varepsilon = \mathbbm{1}_{G_0}\lambda^3\theta^2\varphi^3q_\varepsilon\,,\quad \textnormal{in}\,\,\,Q\,,\,\,\,\mathbb{P}\textnormal{-a.s},\end{equation}
where $(q_\varepsilon,q_{\varepsilon,\Gamma})$ is the solution of the following random 
        parabolic equation
		\begin{equation}\label{3.98}
			\begin{cases}
				\begin{array}{ll}
					dq_\varepsilon - \textnormal{div}(A\nabla q_\varepsilon) \,dt = \theta_\varepsilon^{-2}r_\varepsilon \,dt  &\textnormal{in}\,\,Q,\\
					d q_{\varepsilon,\Gamma}-\textnormal{div}_\Gamma(A_\Gamma\nabla_\Gamma q_{\varepsilon,\Gamma}) \,dt+\partial_\nu^A q_\varepsilon \,dt = \theta_\varepsilon^{-2} r_{\varepsilon,\Gamma} \,dt &\textnormal{on}\,\,\Sigma,\\
					q_{\varepsilon,\Gamma}(t,x)=q_{\varepsilon}\vert_\Gamma(t,x) &\textnormal{on}\,\,\Sigma,\\
					(q_{\varepsilon},q_{\varepsilon,\Gamma})\vert_{t=0}=\big(\frac{1}{\varepsilon}r_\varepsilon(0),\frac{1}{\varepsilon}r_{\varepsilon,\Gamma}(0)\big) &\textnormal{in}\,\,G\times\Gamma,
				\end{array}
			\end{cases}
		\end{equation}
		where $(r_\varepsilon,r_{\varepsilon,\Gamma},R_{\varepsilon,1},R_{\varepsilon,2})$ is the solution of \eqref{3.5} with control $v_\varepsilon$.\\
By Itô's formula, computing $d\langle(r_\varepsilon,r_{\varepsilon,\Gamma}),(q_\varepsilon,q_{\varepsilon,\Gamma})\rangle_{\mathbb{L}^2}$, integrating the equality on $(0,T)$, taking the mean value on both sides and recalling \eqref{3.8}, we conclude that
\begin{align}
    \begin{aligned}
&\,\lambda^{3}\mathbb{E}\int_{Q_0}\theta^{2}\varphi^{3} q_\varepsilon^2\,dxdt+\mathbb{E}\int_Q \theta_\varepsilon^{-2} r_\varepsilon^2 \,dxdt +\mathbb{E}\int_\Sigma \theta_\varepsilon^{-2} r_{\varepsilon,\Gamma}^2\,d\sigma dt\\
&+ \frac{1}{\varepsilon}\mathbb{E}\int_G r^2_\varepsilon(0)dx + \frac{1}{\varepsilon}\mathbb{E}\int_\Gamma r^2_{\varepsilon,\Gamma}(0)d\sigma\\
&\label{3.105}
= -\lambda^3\mathbb{E}\int_Q \theta^2\varphi^3q_\varepsilon z \,dxdt-\lambda^3\mathbb{E}\int_\Sigma \theta^2\varphi^3q_{\varepsilon,\Gamma} z_\Gamma \,d\sigma dt.
    \end{aligned}
\end{align}
Applying Young's inequality, \eqref{3.105} implies that for all $\rho>0$
\begin{align}
    \begin{aligned}
&\,\lambda^{3}\mathbb{E}\int_{Q_0}\theta^{2}\varphi^{3} q_\varepsilon^2\,dxdt+\mathbb{E}\int_Q \theta_\varepsilon^{-2} r_\varepsilon^2 \,dxdt +\mathbb{E}\int_\Sigma \theta_\varepsilon^{-2} r_{\varepsilon,\Gamma}^2\,d\sigma dt\\
&+ \frac{1}{\varepsilon}\mathbb{E}\int_G r^2_\varepsilon(0)dx + \frac{1}{\varepsilon}\mathbb{E}\int_\Gamma r^2_{\varepsilon,\Gamma}(0)d\sigma\\
&\label{3.11}
\leq\rho\Bigg[\lambda^3\mathbb{E}\int_Q \theta^2\varphi^3q_\varepsilon^2 \,dxdt+\lambda^3\mathbb{E}\int_\Sigma \theta^2\varphi^3q_{\varepsilon,\Gamma}^2 \,d\sigma dt\Bigg]\\
&\quad\,+C(\rho)\Bigg[\lambda^3\mathbb{E}\int_Q \theta^2\varphi^3z^2 \,dxdt+\lambda^3\mathbb{E}\int_\Sigma \theta^2\varphi^3 z_\Gamma^2 \,d\sigma dt\Bigg].
    \end{aligned}
\end{align}
Using Carleman estimate \eqref{3.40} for solutions of \eqref{3.98} and noting that $\theta^2\theta_\varepsilon^{-2}\leq1$, then \eqref{3.11} provides that 
\begin{align*}
    \begin{aligned}
&\,\lambda^{3}\mathbb{E}\int_{Q_0}\theta^{2}\varphi^{3} q_\varepsilon^2\,dxdt+\mathbb{E}\int_Q \theta_\varepsilon^{-2} r_\varepsilon^2 \,dxdt +\mathbb{E}\int_\Sigma \theta_\varepsilon^{-2} r_{\varepsilon,\Gamma}^2\,d\sigma dt\\
&+ \frac{1}{\varepsilon}\mathbb{E}\int_G r^2_\varepsilon(0)dx + \frac{1}{\varepsilon}\mathbb{E}\int_\Gamma r^2_{\varepsilon,\Gamma}(0)d\sigma\\
&
\leq\rho\Bigg[\lambda^{3}\mathbb{E}\int_{Q_0}\theta^{2}\varphi^{3} q_\varepsilon^2\,dxdt+\mathbb{E}\int_Q \theta_\varepsilon^{-2} r_\varepsilon^2 \,dxdt +\mathbb{E}\int_\Sigma \theta_\varepsilon^{-2} r_{\varepsilon,\Gamma}^2\,d\sigma dt\Bigg]\\
&\quad\,+C(\rho)\Bigg[\lambda^3\mathbb{E}\int_Q \theta^2\varphi^3z^2 \,dxdt+\lambda^3\mathbb{E}\int_\Sigma \theta^2\varphi^3 z_\Gamma^2 \,d\sigma dt\Bigg],
\end{aligned}
\end{align*}
for any $\lambda\geq C(T+T^2)$. Hence, by choosing a small enough $\rho>0$, it follows that
\begin{align}
    \begin{aligned}
&\,\lambda^{-3}\mathbb{E}\int_{Q}\theta^{-2}\varphi^{-3} v_\varepsilon^2\,dxdt+\mathbb{E}\int_Q \theta_\varepsilon^{-2} r_\varepsilon^2 \,dxdt +\mathbb{E}\int_\Sigma \theta_\varepsilon^{-2} r_{\varepsilon,\Gamma}^2\,d\sigma dt\\
&+ \frac{1}{\varepsilon}\mathbb{E}\int_G r^2_\varepsilon(0)dx + \frac{1}{\varepsilon}\mathbb{E}\int_\Gamma r^2_{\varepsilon,\Gamma}(0)d\sigma\\
&\label{3.120}
\leq C\Bigg[\lambda^3\mathbb{E}\int_Q \theta^2\varphi^3z^2 \,dxdt+\lambda^3\mathbb{E}\int_\Sigma \theta^2\varphi^3z_{\Gamma}^2 \,d\sigma dt\Bigg].
       \end{aligned}
\end{align}
On the other hand, using again Itô's formula, we compute
$$\mathbb{E}\int_Q d(\theta_\varepsilon^{-2}\varphi^{-2}r_\varepsilon^2)dx+\mathbb{E}\int_\Sigma d(\theta_\varepsilon^{-2}\varphi^{-2}r_{\varepsilon,\Gamma}^2)d\sigma.$$
Then we deduce that
\begin{align}
    \begin{aligned}
    &\,2\mathbb{E}\int_Q \theta^{-2}_\varepsilon\varphi^{-2}A\nabla r_\varepsilon\cdot\nabla r_\varepsilon \,dxdt+\mathbb{E}\int_Q \theta^{-2}_\varepsilon\varphi^{-2}R_{\varepsilon,1}^2\,dxdt\\
    &+2\mathbb{E}\int_\Sigma \theta^{-2}_\varepsilon\varphi^{-2}A_\Gamma\nabla_\Gamma r_{\varepsilon,\Gamma}\cdot\nabla_\Gamma r_{\varepsilon,\Gamma}\,d\sigma dt+\mathbb{E}\int_\Sigma \theta^{-2}_\varepsilon\varphi^{-2} R^2_{\varepsilon,2}\,d\sigma dt\\
   &=-\mathbb{E}\int_Q (\theta^{-2}_\varepsilon\varphi^{-2})_tr_\varepsilon^2\,dxdt-\mathbb{E}\int_\Sigma (\theta^{-2}_\varepsilon\varphi^{-2})_tr^2_{\varepsilon,\Gamma}\,d\sigma dt\\
   &\quad\,-2\mathbb{E}\int_Q r_\varepsilon A\nabla r_\varepsilon\cdot\nabla(\theta^{-2}_\varepsilon\varphi^{-2})\,dxdt-2\lambda^3\mathbb{E}\int_Q \theta^{-2}_\varepsilon\theta^2\varphi r_\varepsilon z \,dxdt\\
   &\label{3.13}
\quad\,-2\lambda^3\mathbb{E}\int_\Sigma \theta^{-2}_\varepsilon\theta^2\varphi r_{\varepsilon,\Gamma}z_\Gamma \,d\sigma dt-2\mathbb{E}\int_{Q_0} \theta^{-2}_\varepsilon\varphi^{-2}r_\varepsilon v_\varepsilon \,dxdt.
    \end{aligned}
\end{align}
It is easy to check that for  $\lambda\geq CT^2$, we have for all $(t,x)\in(0,T)\times\widetilde{G}$
\begin{equation}\label{3.14}
\vert(\theta^{-2}_\varepsilon\varphi^{-2})_t\vert\leq CT\lambda \theta_\varepsilon^{-2},\qquad\quad\vert\nabla(\theta^{-2}_\varepsilon\varphi^{-2})\vert\leq C\lambda\theta^{-2}_\varepsilon\varphi^{-1},
\end{equation}
where $C$ depends also on $\mu_0$. Notice that $\theta_\varepsilon^{-1}\leq\theta^{-1}$, using \eqref{3.13}, \eqref{3.14}, assumptions on $A$ and $A_\Gamma$ and Young's inequality, we get for any  $\rho>0$ and any $\lambda\geq CT^2$
\begin{align}
    \begin{aligned}
    &\,\mathbb{E}\int_Q \theta^{-2}_\varepsilon\varphi^{-2} \vert\nabla r_\varepsilon\vert^2 \,dxdt+\mathbb{E}\int_Q \theta^{-2}_\varepsilon\varphi^{-2}R_{\varepsilon,1}^2\,dxdt\\
    &+\mathbb{E}\int_\Sigma \theta^{-2}_\varepsilon\varphi^{-2}\vert\nabla_\Gamma r_{\varepsilon,\Gamma}\vert^2 \,d\sigma dt+\mathbb{E}\int_\Sigma \theta^{-2}_\varepsilon\varphi^{-2} R^2_{\varepsilon,2}\,d\sigma dt\\
    &\leq CT\lambda \mathbb{E}\int_Q \theta_\varepsilon^{-2} r_\varepsilon^2 \,dxdt+CT\lambda \mathbb{E}\int_\Sigma \theta_\varepsilon^{-2} r_{\varepsilon,\Gamma}^2\,d\sigma dt\\
&\quad\,+\rho\mathbb{E}\int_Q \theta^{-2}_\varepsilon\varphi^{-2} \vert\nabla r_\varepsilon\vert^2 \,dxdt+C(\rho)\lambda^2\mathbb{E}\int_Q \theta_\varepsilon^{-2} r_\varepsilon^2 \,dxdt\\
&\quad\,+C\lambda^2\mathbb{E}\int_Q \theta_\varepsilon^{-2} r_\varepsilon^2 \,dxdt+CT^2\lambda^4\mathbb{E}\int_Q \theta^{2}\varphi^3 z^2 \,dxdt\\
&\label{3.1501}
\quad\,+C\lambda^2\mathbb{E}\int_\Sigma \theta_\varepsilon^{-2} r_{\varepsilon,\Gamma}^2 \,d\sigma dt+CT^2\lambda^4\mathbb{E}\int_\Sigma \theta^{2}\varphi^3 z_\Gamma^2 \,d\sigma dt\\
&\quad\,+C\lambda^2\mathbb{E}\int_Q \theta_\varepsilon^{-2} r_{\varepsilon}^2 \,dxdt+CT^2\lambda^{-2}\mathbb{E}\int_{Q}\theta^{-2}\varphi^{-3} v_\varepsilon^2 \,dxdt.
    \end{aligned}
\end{align}
By choosing a small enough $\rho>0$ \eqref{3.1501}, then multiplying the obtained inequality by $\lambda^{-2}$, we obtain for a large $\lambda\geq C(T+T^2)$
\begin{align}
    \begin{aligned}
&\,\lambda^{-2}\mathbb{E}\int_Q \theta^{-2}_\varepsilon\varphi^{-2} \vert\nabla r_\varepsilon\vert^2 \,dxdt+\lambda^{-2}\mathbb{E}\int_Q \theta^{-2}_\varepsilon\varphi^{-2}R_{\varepsilon,1}^2\,dxdt\\
&+\lambda^{-2}\mathbb{E}\int_\Sigma \theta^{-2}_\varepsilon\varphi^{-2}\vert\nabla_\Gamma r_{\varepsilon,\Gamma}\vert^2 \,d\sigma dt+\lambda^{-2}\mathbb{E}\int_\Sigma \theta^{-2}_\varepsilon\varphi^{-2} R^2_{\varepsilon,2}\,d\sigma dt\\
&\leq C\mathbb{E}\int_Q \theta_\varepsilon^{-2} r_\varepsilon^2 \,dxdt +\mathbb{E}\int_\Sigma \theta_\varepsilon^{-2} r_{\varepsilon,\Gamma}^2\,d\sigma dt+C\lambda^3\mathbb{E}\int_Q \theta^2\varphi^3z^2 \,dxdt\\
&\label{3.1502}
\quad\,+C\lambda^3\mathbb{E}\int_\Sigma \theta^2\varphi^3z_{\Gamma}^2 \,d\sigma dt+C\lambda^{-3}\mathbb{E}\int_{Q}\theta^{-2}\varphi^{-3} v_\varepsilon^2\,dxdt.
    \end{aligned}
\end{align}
Combining \eqref{3.120} and \eqref{3.1502}, we obtain
\begin{align}
    \begin{aligned}
&\,\lambda^{-3}\mathbb{E}\int_{Q}\theta^{-2}\varphi^{-3} v_\varepsilon^2\,dxdt+\mathbb{E}\int_Q \theta_\varepsilon^{-2} r_\varepsilon^2 \,dxdt +\mathbb{E}\int_\Sigma \theta_\varepsilon^{-2} r_{\varepsilon,\Gamma}^2\,d\sigma dt\\
&+ \lambda^{-2}\mathbb{E}\int_Q \theta_\varepsilon^{-2}\varphi^{-2}\vert\nabla r_\varepsilon\vert^2 \,dxdt+\lambda^{-2}\mathbb{E}\int_\Sigma \theta_\varepsilon^{-2}\varphi^{-2}\vert\nabla_\Gamma r_{\varepsilon,\Gamma}\vert^2 \,d\sigma dt\\
&+\lambda^{-2}\mathbb{E}\int_Q \theta_\varepsilon^{-2}\varphi^{-2}R_{\varepsilon,1}^2\,dxdt+\lambda^{-2}\mathbb{E}\int_\Sigma \theta_\varepsilon^{-2}\varphi^{-2}R_{\varepsilon,2}^2\,d\sigma dt\\
&+\frac{1}{\varepsilon}\mathbb{E}\int_G r^2_\varepsilon(0)dx + \frac{1}{\varepsilon}\mathbb{E}\int_\Gamma r^2_{\varepsilon,\Gamma}(0)d\sigma\\
&\label{3.15}
\leq C\Bigg[ \lambda^3\mathbb{E}\int_Q \theta^2\varphi^3z^2 \,dxdt+ \lambda^3\mathbb{E}\int_\Sigma \theta^2\varphi^3z_\Gamma^2 \,d\sigma dt\Bigg],
    \end{aligned}
\end{align}
for any $\lambda\geq C(T+T^2)$. Then, it follows that there exists  
$$(\hat{v},\hat{r},\hat{r}_\Gamma,\hat{R}_1,\hat{R}_2)\in L^2_\mathcal{F}(0,T;L^2(G_0))\times L^2_\mathcal{F}(0,T;H^1(G))\times L^2_\mathcal{F}(0,T;H^1(\Gamma))\times L^2_\mathcal{F}(0,T;\mathbb{L}^2),$$
such that as $\varepsilon\rightarrow0$,
\begin{align}\label{3.16}
\begin{aligned}
&\,v_\varepsilon \longrightarrow \hat{v},\,\,\,\textnormal{weakly}\,\,\,\textnormal{in}\,\,\,L^2((0,T)\times\Omega;L^2(G_0));\\
&r_\varepsilon \longrightarrow \hat{r},\,\,\,\textnormal{weakly}\,\,\,\textnormal{in}\,\,\,L^2((0,T)\times\Omega;H^1(G));\\
&r_{\varepsilon,\Gamma} \longrightarrow \hat{r}_\Gamma,\,\,\,\textnormal{weakly}\,\,\,\textnormal{in}\,\,\,L^2((0,T)\times\Omega;H^1(\Gamma));\\
&R_{\varepsilon,1} \longrightarrow \hat{R}_1,\,\,\,\textnormal{weakly}\,\,\,\textnormal{in}\,\,\,L^2((0,T)\times\Omega;L^2(G));\\
&R_{\varepsilon,2} \longrightarrow\hat{R}_2,\,\,\,\textnormal{weakly}\,\,\,\textnormal{in}\,\,\,L^2((0,T)\times\Omega;L^2(\Gamma)).
\end{aligned}
\end{align}
Let us now show that $(\hat{r},\hat{r}_\Gamma,\hat{R}_1,\hat{R}_2)$ is the solution to \eqref{3.5} associated to the control $\hat{v}$. To prove this fact, let us consider that $(\Tilde{r},\Tilde{r}_\Gamma,\Tilde{R}_1,\Tilde{R}_2)$ is the unique solution in $\big(L^2_\mathcal{F}(\Omega;C([0,T];\mathbb{L}^2))\cap L^2_\mathcal{F}(0,T;\mathbb{H}^1)\big)\times L^2_\mathcal{F}(0,T;\mathbb{L}^2)$ to \eqref{3.5} with the control $\hat{v}$. For any processes $f_1,f_2\in L^2_\mathcal{F}(0,T;L^2(G))$ and $g_1,g_2\in L^2_\mathcal{F}(0,T;L^2(\Gamma))$, we consider the following stochastic parabolic equation
		\begin{equation}\label{3.126}
			\begin{cases}
				\begin{array}{lcl}
					d\phi-\textnormal{div}(A\nabla\phi) \,dt = f_1 \,dt+f_2 \,dW(t) &\textnormal{in}& Q,\\
					d\phi_\Gamma-\textnormal{div}_\Gamma(A_\Gamma\nabla_\Gamma\phi_\Gamma) dt+\partial_\nu^A \phi \,dt = g_1 \,dt+g_2 \,dW(t) &\textnormal{on}& \Sigma,\\
\phi_\Gamma(t,x)=\phi\vert_\Gamma(t,x)&\textnormal{on}&\Sigma,\\
					(\phi,\phi_\Gamma)\vert_{t=0}=(0,0)&\textnormal{in}&G\times\Gamma.
				\end{array}
			\end{cases}
		\end{equation}
Using Itô's formula, we compute ``$d\langle(\phi,\phi_\Gamma),(\Tilde{r},\Tilde{r}_\Gamma)\rangle_{\mathbb{L}^2}-d\langle(\phi,\phi_\Gamma),(r_\varepsilon,r_{\varepsilon,\Gamma})\rangle_{\mathbb{L}^2}$'', and  letting $\varepsilon\rightarrow0$, we get
\begin{align*}
&\,\mathbb{E}\int_Q [(\Tilde{r}-\hat{r})f_1+(\Tilde{R}_1-\hat{R}_1)f_2] \,dxdt\\
&+\mathbb{E}\int_\Sigma [(\Tilde{r}_\Gamma-\hat{r}_\Gamma)g_1+(\Tilde{R}_2-\hat{R}_2)g_2] \,d\sigma dt = 0.
\end{align*}
Then, it follows that $\Tilde{r}=\hat{r}$ and $\Tilde{R}_1=\hat{R}_1$ in $Q$, $\mathbb{P}\textnormal{-a.s.}$, $\Tilde{r}_\Gamma=\hat{r}_\Gamma$ and $\Tilde{R}_2=\hat{R}_2$ on $\Sigma$, $\mathbb{P}\textnormal{-a.s.}$ Then we deduce that $(\hat{r},\hat{r}_\Gamma,\hat{R}_1,\hat{R}_2)$ is the solution of \eqref{3.5} with the control $\hat{v}$. Finally, combining \eqref{3.15} and \eqref{3.16}, we conclude the controllability result of \eqref{3.5} and the desired estimate \eqref{3.6}. This completes the proof of Proposition \ref{prop3.1}.
\end{proof}
Now, we are in a position to provide our main global Carleman estimate for system \eqref{3.1}.
\begin{thm}\label{thm3.1}
For $\mu=\mu_0$ given in Lemma \ref{lm3.2}, there exists a positive constant $C$ depending on $G$, $G_0$, $\mu_0$, $A$ and $A_\Gamma$ such that for all $F_0, F_1\in L^2_\mathcal{F}(0,T;L^2(G))$, $F\in L^2_\mathcal{F}(0,T;L^2(G,\mathbb{R}^N))$, $F_{0,\Gamma}, F_{1,\Gamma}\in L^2_\mathcal{F}(0,T;L^2(\Gamma))$, $F_\Gamma\in L^2_\mathcal{F}(0,T;L^2(\Gamma;\mathbb{R}^N))$ and $(z_0,z_{\Gamma,0})\in L^2_{\mathcal{F}_0}(\Omega;\mathbb{L}^2)$, the weak solution $(z,z_\Gamma)$ of equation \eqref{3.1} satisfies that 
\begin{align}\label{car3.6}
    \begin{aligned}
&\,\lambda^3\mathbb{E}\int_Q \theta^2\varphi^3z^2\,dxdt + \lambda^3\mathbb{E}\int_\Sigma \theta^2\varphi^3z_\Gamma^2\,d\sigma dt\\
&+\lambda\mathbb{E}\int_Q \theta^2\varphi \vert\nabla z\vert^2\,dxdt + \lambda\mathbb{E}\int_\Sigma \theta^2\varphi \vert\nabla_\Gamma z_\Gamma\vert^2 \,d\sigma dt\\
&\leq C \Bigg[ \lambda^3\mathbb{E}\int_{Q_0} \theta^2\varphi^3 z^2 \,dxdt + \mathbb{E}\int_Q \theta^2 F_0^2 \,dxdt \\
&\qquad\,\,+\lambda^2\mathbb{E}\int_Q \theta^2\varphi^2F_1^2 \,dxdt+\lambda^2\mathbb{E}\int_Q \theta^2\varphi^2 \vert F\vert^2 \,dxdt+\mathbb{E}\int_\Sigma \theta^2 F_{0,\Gamma}^2\,d\sigma dt\\
&\qquad\,\,+\lambda^2\mathbb{E}\int_\Sigma\theta^2\varphi^2F_{1,\Gamma}^2 \,d\sigma dt+\lambda^2\mathbb{E}\int_\Sigma\theta^2\varphi^2 \vert F_\Gamma\vert^2 \,d\sigma dt\Bigg],
\end{aligned}
\end{align}
for all $\lambda\geq C(T+T^2)$.
\end{thm}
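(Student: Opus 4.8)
The plan is to establish \eqref{car3.6} by the duality technique built on the controlled equation \eqref{3.5}. Fix $\mu=\mu_0$ and $\lambda\ge C(T+T^2)$, let $(z,z_\Gamma)$ be the weak solution of \eqref{3.1}, and let $(\hat r,\hat r_\Gamma,\hat R_1,\hat R_2)$ together with the control $\hat v$ be the objects furnished by Proposition \ref{prop3.1}; recall that they satisfy both $\hat r(T,\cdot)=\hat r_\Gamma(T,\cdot)=0$ and $\hat r(0,\cdot)=\hat r_\Gamma(0,\cdot)=0$. First I would apply the Itô formula of Lemma \ref{lm1.1} (and its boundary counterpart) to $d\langle(z,z_\Gamma),(\hat r,\hat r_\Gamma)\rangle_{\mathbb{L}^2}$, integrate over $(0,T)$ and take expectations; both temporal boundary contributions vanish owing to the two sets of null conditions just recalled. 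Substituting \eqref{3.1} and \eqref{3.5}, integrating by parts in the space variable (using the symmetry of $A$ and $A_\Gamma$, the weak-divergence identities of Lemma \ref{lm1.2}, and the structural fact that the co-normal coupling $\partial^A_\nu$ between bulk and surface and the trace term $-F\cdot\nu$ are designed so that all the resulting boundary traces cancel), the cross terms collapse to the duality identity
\begin{align*}
&\,\lambda^3\mathbb{E}\int_Q \theta^2\varphi^3 z^2\,dxdt+\lambda^3\mathbb{E}\int_\Sigma \theta^2\varphi^3 z_\Gamma^2\,d\sigma dt\\
&=-\mathbb{E}\int_{Q_0}\hat v\,z\,dxdt-\mathbb{E}\int_Q F_0\hat r\,dxdt+\mathbb{E}\int_Q F\cdot\nabla\hat r\,dxdt-\mathbb{E}\int_Q F_1\hat R_1\,dxdt\\
&\quad\,-\mathbb{E}\int_\Sigma F_{0,\Gamma}\hat r_\Gamma\,d\sigma dt+\mathbb{E}\int_\Sigma F_\Gamma\cdot\nabla_\Gamma\hat r_\Gamma\,d\sigma dt-\mathbb{E}\int_\Sigma F_{1,\Gamma}\hat R_2\,d\sigma dt.
\end{align*}

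Next I would estimate the right-hand side term by term, splitting each weight symmetrically so that it matches \eqref{3.6}: for instance $\hat v\,z=\big(\lambda^{-3/2}\theta^{-1}\varphi^{-3/2}\hat v\big)\big(\lambda^{3/2}\theta\varphi^{3/2}z\big)$, $F_0\hat r=(\theta F_0)(\theta^{-1}\hat r)$, $F\cdot\nabla\hat r=(\lambda\theta\varphi F)\cdot(\lambda^{-1}\theta^{-1}\varphi^{-1}\nabla\hat r)$, $F_1\hat R_1=(\lambda\theta\varphi F_1)(\lambda^{-1}\theta^{-1}\varphi^{-1}\hat R_1)$, and likewise on $\Sigma$. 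Applying the Cauchy--Schwarz inequality and then Proposition \ref{prop3.1}, every factor carrying $\hat v$, $\hat r$, $\hat R_i$, $\nabla\hat r$ or $\nabla_\Gamma\hat r_\Gamma$ is bounded by $C\big(\lambda^3\mathbb{E}\int_Q \theta^2\varphi^3 z^2\,dxdt+\lambda^3\mathbb{E}\int_\Sigma \theta^2\varphi^3 z_\Gamma^2\,d\sigma dt\big)^{1/2}$, so Young's inequality absorbs the arising quadratic contribution into the left-hand side, while the $\hat v\,z$ term additionally leaves the good term $C\lambda^3\mathbb{E}\int_{Q_0}\theta^2\varphi^3 z^2\,dxdt$ and the remaining factors leave exactly the data terms appearing on the right of \eqref{car3.6}. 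This yields \eqref{car3.6} for the two summands $\lambda^3\mathbb{E}\int_Q\theta^2\varphi^3 z^2$ and $\lambda^3\mathbb{E}\int_\Sigma\theta^2\varphi^3 z_\Gamma^2$.

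To recover the gradient terms I would perform a weighted energy identity: apply Itô's formula to $\theta^2\varphi z^2$ over $G$ and to $\theta^2\varphi z_\Gamma^2$ over $\Gamma$, integrate over $(0,T)$, and note that $\theta^2\varphi\to0$ as $t\to0^+$ and $t\to T^-$ so that the integral of the exact differential vanishes. Expanding, integrating by parts in space (the $\partial^A_\nu$ traces again cancel between the bulk and surface identities, and $\nabla_\Gamma(\theta^2\varphi)=0$ on $\Gamma$ by \eqref{3.3010}) and using the ellipticity of $A$ and $A_\Gamma$, one arrives at an inequality whose left-hand side dominates $2\beta\lambda\mathbb{E}\int_Q\theta^2\varphi|\nabla z|^2+2\beta\lambda\mathbb{E}\int_\Sigma\theta^2\varphi|\nabla_\Gamma z_\Gamma|^2$ and whose right-hand side is a sum of the lower-order terms $\lambda|(\theta^2\varphi)_t|z^2$, $\lambda|\nabla(\theta^2\varphi)|\,|z|\,|\nabla z|$, $\lambda\theta^2\varphi|z||F_0|$, $\lambda\theta^2\varphi|F||\nabla z|$, $\lambda|z||F||\nabla(\theta^2\varphi)|$, $\lambda\theta^2\varphi F_1^2$ and their surface analogues. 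Invoking the weight bounds \eqref{3.3} (with $\mu=\mu_0$ fixed), the pointwise inequalities $|(\theta^2\varphi)_t|\le C\lambda T\theta^2\varphi^3$ and $|\nabla(\theta^2\varphi)|\le C\lambda\theta^2\varphi^2$, the constraint $\lambda\ge C(T+T^2)$ (which makes $\lambda\varphi\ge1$ and $\lambda^{-1}\varphi^{-1}\le C$), the bound on $\lambda^3\mathbb{E}\int_Q\theta^2\varphi^3 z^2$ (and its surface version) just obtained, and Young's inequality to absorb the $\lambda\theta^2\varphi|\nabla z|^2$ and $\lambda\theta^2\varphi|\nabla_\Gamma z_\Gamma|^2$ contributions back into the left-hand side, one controls the gradient terms by the right-hand side of \eqref{car3.6}. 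Adding this to the estimate of the previous paragraph finishes the proof.

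The main obstacle — and the reason the argument routes through the auxiliary controllability Proposition \ref{prop3.1} rather than estimating \eqref{3.1} head-on — is the weak-divergence source terms $\textnormal{div}(F)$ and $\textnormal{div}_\Gamma(F_\Gamma)$: a direct Carleman computation would integrate these against the heavily weighted test function $\theta^2\varphi^3 z$, which would require a weight of order $\theta^2\varphi^5$ on $F$ that cannot be swallowed by the only available gradient term $\lambda\mathbb{E}\int_Q\theta^2\varphi|\nabla z|^2$. The duality identity instead pairs $F$ solely with $\nabla\hat r$, which carries the mild weight $\lambda^{-1}\theta^{-1}\varphi^{-1}$ controlled by \eqref{3.6}, producing the clean right-hand side weight $\lambda^2\theta^2\varphi^2|F|^2$. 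The most delicate bookkeeping is then verifying, in both the duality identity and the weighted energy identity, that every boundary trace generated by integration by parts — the co-normal coupling $\partial^A_\nu$ and the extra term $-F\cdot\nu$ — cancels exactly; this is precisely where the dynamic-boundary structure of \eqref{3.1}--\eqref{3.5} enters in an essential way.
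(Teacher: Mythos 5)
Your proposal is correct and follows essentially the same route as the paper: the duality identity obtained by applying Itô's formula to $\langle(z,z_\Gamma),(\hat r,\hat r_\Gamma)\rangle_{\mathbb{L}^2}$ with the controlled solution from Proposition \ref{prop3.1}, absorption via Young's inequality and estimate \eqref{3.6} to bound the zero-order terms, and then the weighted energy (Itô) identities for $\theta^2\varphi z^2$ and $\theta^2\varphi z_\Gamma^2$ combined with the weight bounds \eqref{3.3}, \eqref{3.23} to recover the gradient terms. The sign bookkeeping in your duality identity and the cancellation of the $\partial^A_\nu$ and $F\cdot\nu$ traces match the paper's computation, so no gap.
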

\begin{proof}
Let $(z,z_\Gamma)$ be the solution of \eqref{3.1} and $(\hat{r},\hat{r}_\Gamma,\hat{R}_1,\hat{R}_2)$ be the solution of \eqref{3.5} with $v=\hat{v}$ given in Proposition \ref{prop3.1}. By applying Itô's formula in Lemma \eqref{lm1.1} and using also Lemma \ref{lm1.2}, we obtain that
\begin{align*}
&\,\lambda^3\mathbb{E}\int_Q \theta^2\varphi^3z^2 \,dxdt+\lambda^3\mathbb{E}\int_\Sigma \theta^2\varphi^3z_\Gamma^2\,d\sigma dt\\
&=-\mathbb{E}\int_Q [\mathbbm{1}_{G_0}z\hat{v}+F_0\hat{r}+F_1\hat{R}_1-F\cdot\nabla\hat{r}]\,dxdt\\
&\quad\,-\mathbb{E}\int_\Sigma [F_{0,\Gamma}\hat{r}_\Gamma+F_{1,\Gamma}\hat{R}_2-F_\Gamma\cdot\nabla_\Gamma\hat{r}_\Gamma]\,d\sigma dt.
\end{align*}
By Young's inequality, it follows that for all $\rho>0$, we have
\begin{align}
    \begin{aligned}
&\,\lambda^3\mathbb{E}\int_Q \theta^2\varphi^3z^2 \,dxdt+\lambda^3\mathbb{E}\int_\Sigma \theta^2\varphi^3z_\Gamma^2 \,d\sigma dt\\
&\leq\rho\Bigg[\lambda^{-3} \mathbb{E}\int_{Q} \theta^{-2}\varphi^{-3}\hat{v}^2 \,dxdt +\mathbb{E}\int_Q \theta^{-2}\hat{r}^2 \,dxdt+\mathbb{E}\int_\Sigma \theta^{-2}\hat{r}_\Gamma^2 \,d\sigma dt\\
&\quad\,+\lambda^{-2}\mathbb{E}\int_Q \theta^{-2}\varphi^{-2}\vert\nabla\hat{r}\vert^2\,dxdt+\lambda^{-2}\mathbb{E}\int_\Sigma \theta^{-2}\varphi^{-2}\vert\nabla_\Gamma\hat{r}_\Gamma\vert^2\,d\sigma dt\\
&\quad\,+\lambda^{-2}\mathbb{E}\int_Q \theta^{-2}\varphi^{-2}\hat{R}_1^2\,dxdt+\lambda^{-2}\mathbb{E}\int_\Sigma \theta^{-2}\varphi^{-2}\hat{R}_2^2\,d\sigma dt \Bigg]\\
&\quad\,+C(\rho)\Bigg[ \lambda^3\mathbb{E}\int_{Q_0} \theta^2\varphi^3 z^2 \,dxdt + \mathbb{E}\int_Q \theta^2 F_0^2 \,dxdt +\lambda^2\mathbb{E}\int_Q \theta^2\varphi^2F_1^2 \,dxdt\\
&\qquad\qquad\;\;+\lambda^2\mathbb{E}\int_Q \theta^2\varphi^2 \vert F\vert^2 \,dxdt+\mathbb{E}\int_\Sigma \theta^2 F_{0,\Gamma}^2\,d\sigma dt\\
&\label{3.240123}\qquad\qquad\;\;+\lambda^2\mathbb{E}\int_\Sigma\theta^2\varphi^2F_{1,\Gamma}^2 \,d\sigma dt+\lambda^2\mathbb{E}\int_\Sigma\theta^2\varphi^2 \vert F_\Gamma\vert^2 \,d\sigma dt\Bigg]
\end{aligned}
\end{align}
Using inequality \eqref{3.6} in \eqref{3.240123} and taking a small enough $\rho>0$, we conclude that 
\begin{align}
    \begin{aligned}
&\,\lambda^3\mathbb{E}\int_Q \theta^2\varphi^3z^2 \,dxdt+\lambda^3\mathbb{E}\int_\Sigma \theta^2\varphi^3z_\Gamma^2 \,d\sigma dt\\
&\leq C\Bigg[ \lambda^3\mathbb{E}\int_{Q_0} \theta^2\varphi^3 z^2 \,dxdt + \mathbb{E}\int_Q \theta^2 F_0^2 \,dxdt +\lambda^2\mathbb{E}\int_Q \theta^2\varphi^2F_1^2 \,dxdt\\
&\label{3.200}
\quad\quad\,+\lambda^2\mathbb{E}\int_Q \theta^2\varphi^2 \vert F\vert^2 \,dxdt+\mathbb{E}\int_\Sigma \theta^2 F_{0,\Gamma}^2\,d\sigma dt+\lambda^2\mathbb{E}\int_\Sigma\theta^2\varphi^2F_{1,\Gamma}^2 \,d\sigma dt\\
&\quad\quad\,+\lambda^2\mathbb{E}\int_\Sigma\theta^2\varphi^2 \vert F_\Gamma\vert^2 \,d\sigma dt\Bigg],
\end{aligned}
\end{align}
for all $\lambda\geq C(T+T^2)$.

On the other hand, differentiating $d(\theta^2\varphi z,z)_{L^2(G)}$ by using Itô's formula, we get
\begin{align}\label{3.20}
    \begin{aligned}
0=&\,\mathbb{E}\int_Q (\theta^2\varphi)_tz^2\,dxdt-2\mathbb{E}\int_Q A\nabla z\cdot\nabla(\theta^2\varphi z)\,dxdt\\
&+2\mathbb{E}\int_0^T\langle\partial_\nu^A z,\theta^2\varphi z_\Gamma\rangle_{H^{-1/2}(\Gamma),H^{1/2}(\Gamma)} dt+2\mathbb{E}\int_Q \theta^2\varphi zF_0\,dxdt\\
&-2\mathbb{E}\int_Q F\cdot\nabla(\theta^2\varphi z)\,dxdt+2\mathbb{E}\int_0^T \langle F\cdot\nu,\theta^2\varphi z_\Gamma\rangle_{H^{-1/2}(\Gamma),H^{1/2}(\Gamma)} dt\\
&+\mathbb{E}\int_Q \theta^2\varphi F_1^2\,dxdt,
\end{aligned}
\end{align}
and by the same way, we compute also $d(\theta^2\varphi z_\Gamma,z_\Gamma)_{L^2(\Gamma)}$, we obtain
\begin{align}\label{3.21}
    \begin{aligned}
0=&\,\mathbb{E}\int_\Sigma (\theta^2\varphi)_tz_\Gamma^2\,d\sigma dt-2\mathbb{E}\int_\Sigma A_\Gamma\nabla_\Gamma z_\Gamma\cdot\nabla_\Gamma(\theta^2\varphi z_\Gamma)\,d\sigma dt\\
&-2\mathbb{E}\int_0^T \langle \partial_\nu^A z,\theta^2\varphi z_\Gamma\rangle_{H^{-1/2}(\Gamma),H^{1/2}(\Gamma)} dt+2\mathbb{E}\int_\Sigma\theta^2\varphi z_\Gamma F_{0,\Gamma} \,d\sigma dt\\
&-2\mathbb{E}\int_0^T \langle F\cdot\nu,\theta^2\varphi z_\Gamma\rangle_{H^{-1/2}(\Gamma),H^{1/2}(\Gamma)} dt-2\mathbb{E}\int_\Sigma F_\Gamma\cdot\nabla_\Gamma(\theta^2\varphi z_\Gamma)\,d\sigma dt\\
&+\mathbb{E}\int_\Sigma \theta^2\varphi F_{1,\Gamma}^2\,d\sigma dt.
\end{aligned}
\end{align}
Adding \eqref{3.20} and \eqref{3.21}, it is easy to see that
\begin{align}
    \begin{aligned}
&\,2\mathbb{E}\int_Q \theta^2\varphi A\nabla z\cdot\nabla z \,dxdt+2\mathbb{E}\int_\Sigma \theta^2\varphi A_\Gamma\nabla_\Gamma z_\Gamma\cdot\nabla_\Gamma z_\Gamma \,d\sigma dt\\
&=\mathbb{E}\int_Q (\theta^2\varphi)_tz^2\,dxdt-2
\mathbb{E}\int_Q zA\nabla z\cdot\nabla(\theta^2\varphi) \,dxdt
\\
&\quad\,-2\mathbb{E}\int_Q \theta^2\varphi F\cdot\nabla z \,dxdt-2\mathbb{E}\int_Q z\nabla(\theta^2\varphi)\cdot F \,dxdt
+2\mathbb{E}\int_Q \theta^2\varphi zF_0\,dxdt\\
&\quad\,+\mathbb{E}\int_Q \theta^2\varphi F_1^2\,dxdt+\mathbb{E}\int_\Sigma (\theta^2\varphi)_tz_\Gamma^2\,d\sigma dt
+2\mathbb{E}\int_\Sigma\theta^2\varphi z_\Gamma F_{0,\Gamma} \,d\sigma dt\\
&\label{3.22}
\quad\,-2\mathbb{E}\int_\Sigma \theta^2\varphi F_\Gamma\cdot\nabla_\Gamma z_\Gamma \,d\sigma dt+\mathbb{E}\int_\Sigma \theta^2\varphi F_{1,\Gamma}^2\,d\sigma dt.
\end{aligned}
\end{align}
Now, it is easy to check that for $\lambda\geq CT^2$,
\begin{equation}\label{3.23}
    \vert (\theta^2\varphi)_t\vert\leq CT\lambda \theta^2\varphi^3,\,\,\,\textnormal{in}\,\,\overline{Q};\qquad \vert\nabla(\theta^2\varphi)\vert\leq C\lambda\theta^2\varphi^2,\,\,\,\textnormal{in}\,\,Q,
\end{equation}
where $C$ depends on $\mu_0$ a well. Combining \eqref{3.22}, \eqref{3.23} and using assumptions on $A$ and $A_\Gamma$, we end up with
\begin{align}
    \begin{aligned}
&\,\mathbb{E}\int_Q \theta^2\varphi\vert\nabla z\vert^2\,dxdt+\mathbb{E}\int_\Sigma \theta^2\varphi\vert\nabla_\Gamma z_\Gamma\vert^2\,d\sigma dt\\
&\leq CT\lambda \mathbb{E}\int_Q \theta^2\varphi^3z^2\,dxdt+C\lambda\mathbb{E}\int_Q \theta^2\varphi^2\vert z\vert\vert\nabla z \vert \,dxdt\\
&\quad\,+C\mathbb{E}\int_Q  \theta^2\varphi\vert F\vert\vert\nabla z\vert \,dxdt+C\lambda\mathbb{E}\int_Q \theta^2\varphi^2\vert F\vert\vert z\vert \,dxdt+C\mathbb{E}\int_Q \theta^2\varphi\vert F_0\vert\vert z\vert \,dxdt\\
&\quad\,+C\mathbb{E}\int_Q \theta^2\varphi F_1^2\,dxdt+CT\lambda \mathbb{E}\int_\Sigma \theta^2\varphi^3 z_\Gamma^2\,d\sigma dt+C\mathbb{E}\int_\Sigma \theta^2\varphi\vert F_{0,\Gamma}\vert\vert z_\Gamma\vert \,d\sigma dt\\
&\label{3.24}
\quad\,+\mathbb{E}\int_\Sigma \theta^2\varphi\vert F_\Gamma\vert\vert\nabla_\Gamma z_\Gamma\vert \,d\sigma dt+C\mathbb{E}\int_\Sigma\theta^2\varphi F_{1,\Gamma}^2\,d\sigma dt.
\end{aligned}
\end{align}
Applying Young's inequality, \eqref{3.24} provides that for all $\rho>0$, one has
\begin{align}
    \begin{aligned}
&\,\mathbb{E}\int_Q \theta^2\varphi\vert\nabla z\vert^2\,dxdt+\mathbb{E}\int_\Sigma \theta^2\varphi\vert\nabla_\Gamma z_\Gamma\vert^2\,d\sigma dt\\
&\leq\rho\Bigg[\mathbb{E}\int_Q \theta^2\varphi\vert\nabla z\vert^2\,dxdt+\mathbb{E}\int_\Sigma \theta^2\varphi\vert\nabla_\Gamma z_\Gamma\vert^2\,d\sigma dt\Bigg]\\
&\quad\,+C(\rho)\Bigg[T\lambda \mathbb{E}\int_Q\theta^2\varphi^3z^2\,dxdt+\lambda^2\mathbb{E}\int_Q\theta^2\varphi^3z^2\,dxdt\\
&\qquad\qquad\,\;+\mathbb{E}\int_Q\theta^2\varphi\vert F\vert^2\,dxdt+\lambda\mathbb{E}\int_Q\theta^2\varphi^2z^2\,dxdt+\lambda^{-1}\mathbb{E}\int_Q\theta^2F_0^2\,dxdt\\
&\label{3.25}
\qquad\qquad\,\;+\mathbb{E}\int_Q\theta^2\varphi F_1^2\,dxdt+T\lambda \mathbb{E}\int_\Sigma\theta^2\varphi^3z_\Gamma^2\,d\sigma dt+\lambda\mathbb{E}\int_\Sigma\theta^2\varphi^2z_\Gamma^2\,d\sigma dt\\
&\qquad\qquad\,\;+\lambda^{-1}\mathbb{E}\int_\Sigma\theta^2F_{0,\Gamma}^2\,d\sigma dt+\mathbb{E}\int_\Sigma\theta^2\varphi\vert F_\Gamma\vert^2 \,d\sigma dt+\mathbb{E}\int_\Sigma\theta^2\varphi F_{1,\Gamma}^2\,d\sigma dt\Bigg].
\end{aligned}
\end{align}
Choosing a small enough $\rho$ in \eqref{3.25}, then multiplying the obtained inequality by $\lambda$ and recalling \eqref{3.3}, we deduce that
\begin{align}
    \begin{aligned}
&\,\lambda\mathbb{E}\int_Q \theta^2\varphi\vert\nabla z\vert^2\,dxdt+\lambda\mathbb{E}\int_\Sigma \theta^2\varphi\vert\nabla_\Gamma z_\Gamma\vert^2\,d\sigma dt\\
&\leq C\Bigg[T\lambda^2 \mathbb{E}\int_Q\theta^2\varphi^3z^2\,dxdt+\lambda^3\mathbb{E}\int_Q\theta^2\varphi^3z^2\,dxdt\\
&\quad\quad\,+T^2\lambda\mathbb{E}\int_Q\theta^2\varphi^2\vert F\vert^2\,dxdt+T^2\lambda^2\mathbb{E}\int_Q\theta^2\varphi^3z^2\,dxdt+\mathbb{E}\int_Q\theta^2F_0^2\,dxdt\\
&\quad\quad\,+T^2\lambda\mathbb{E}\int_Q\theta^2\varphi^2 F_1^2\,dxdt+T\lambda^2\mathbb{E}\int_\Sigma\theta^2\varphi^3z_\Gamma^2\,d\sigma dt+T^2\lambda^2\mathbb{E}\int_\Sigma\theta^2\varphi^3z_\Gamma^2\,d\sigma dt\\
&\label{3.2501}
\quad\quad\,+\mathbb{E}\int_\Sigma\theta^2F_{0,\Gamma}^2\,d\sigma dt+T^2\lambda\mathbb{E}\int_\Sigma\theta^2\varphi^2\vert F_\Gamma\vert^2 \,d\sigma dt+T^2\lambda\mathbb{E}\int_\Sigma\theta^2\varphi^2 F_{1,\Gamma}^2\,d\sigma dt\Bigg].
\end{aligned}
\end{align}
Finally, combining \eqref{3.200} and \eqref{3.2501}, and taking a large $\lambda\geq C(T+T^2)$, we get the desired Carleman estimate \eqref{car3.6}. This concludes the proof of Theorem \ref{thm3.1}.
\end{proof}
\subsection{Proof of Theorem \ref{thm1.2}}
Firstly, we show the following global Carleman estimate for solutions of the adjoint equation \eqref{1.012} which is an easy consequence of Carleman estimate \eqref{car3.6}.
\begin{cor}\label{corcarl}
There exists $C=C(G,G_0,\mu_0,A,A_\Gamma)>0$ such that the weak solution $(z,z_\Gamma)$ of system \eqref{1.012} satisfies that
\begin{align}
\begin{aligned}
&\,\lambda^3\mathbb{E}\int_Q \theta^2 \varphi^3z^2 \,dxdt +\lambda^3\mathbb{E}\int_\Sigma \theta^2\varphi^3z^2_\Gamma \,d\sigma dt\\
&+\lambda\mathbb{E}\int_Q \theta^2\varphi\vert\nabla z\vert^2 \,dxdt+\lambda\mathbb{E}\int_\Sigma \theta^2\varphi\vert\nabla_\Gamma z_\Gamma\vert^2\,d\sigma dt\\
&\label{1.4}
\leq C\lambda^3\,\mathbb{E}\int_{Q_0} \theta^2\varphi^3 z^2 \,dxdt,
\end{aligned}
\end{align}
for all $\lambda\geq\lambda_1= C\big[T+T^2(1+\vert a_1\vert_\infty^{2/3}+\vert a_2\vert_\infty^{2}+\vert B\vert_\infty^2+\vert b_1\vert_\infty^{2/3}+\vert b_2\vert_\infty^2+\vert B_\Gamma\vert_\infty^2)\big].$  
\end{cor}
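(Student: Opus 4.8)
The strategy is to deduce \eqref{1.4} directly from the global Carleman estimate of Theorem \ref{thm3.1}, by recognizing the adjoint system \eqref{1.012} as a particular case of \eqref{3.1} and then absorbing the zero-order contributions of the coefficients $a_1,a_2,B,b_1,b_2,B_\Gamma$ into the left-hand side by enlarging $\lambda$. First I would note that \eqref{1.012} is exactly \eqref{3.1} with
$$F_0=-a_1z,\qquad F=zB,\qquad F_1=-a_2z\quad\text{in }Q,$$
and $F_{0,\Gamma}=-b_1z_\Gamma$, $F_\Gamma=z_\Gamma B_\Gamma$, $F_{1,\Gamma}=-b_2z_\Gamma$ on $\Sigma$; indeed $-F\cdot\nu=-zB\cdot\nu=-z_\Gamma B\cdot\nu$ on $\Sigma$, which reproduces precisely the extra convective boundary term in \eqref{1.012}, and the regularity $(z,z_\Gamma)\in L^2_\mathcal{F}(0,T;\mathbb{H}^1)$ together with the $L^\infty$-bounds on the coefficients places all these source terms in the spaces required by Theorem \ref{thm3.1}. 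Applying \eqref{car3.6} with this choice and pulling the coefficients out of the integrals through their $L^\infty$-norms, the right-hand side of \eqref{car3.6} is dominated, up to $C$, by the observation term $\lambda^3\mathbb{E}\int_{Q_0}\theta^2\varphi^3z^2\,dxdt$ plus zero-order terms of two types: those against the bare weight $\theta^2$, namely $|a_1|_\infty^2\mathbb{E}\int_Q\theta^2z^2\,dxdt$ and $|b_1|_\infty^2\mathbb{E}\int_\Sigma\theta^2z_\Gamma^2\,d\sigma dt$, and those against $\lambda^2\theta^2\varphi^2$, namely $\lambda^2(|a_2|_\infty^2+|B|_\infty^2)\mathbb{E}\int_Q\theta^2\varphi^2z^2\,dxdt$ and $\lambda^2(|b_2|_\infty^2+|B_\Gamma|_\infty^2)\mathbb{E}\int_\Sigma\theta^2\varphi^2z_\Gamma^2\,d\sigma dt$.

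The second step is the absorption, for which I would invoke the lower bound $\varphi\ge cT^{-2}$ from \eqref{3.3}, valid on $\overline{Q}$ and, since $\psi\equiv0$ on $\Gamma$, also on $\overline{\Sigma}$ where $\varphi=(t(T-t))^{-1}$. For the terms against $\theta^2$ one writes $\theta^2z^2\le CT^6\,\theta^2\varphi^3z^2$, so $|a_1|_\infty^2\mathbb{E}\int_Q\theta^2z^2\,dxdt\le\tfrac14\lambda^3\mathbb{E}\int_Q\theta^2\varphi^3z^2\,dxdt$ as soon as $\lambda^3\ge CT^6|a_1|_\infty^2$, i.e. $\lambda\ge CT^2|a_1|_\infty^{2/3}$, and likewise $\lambda\ge CT^2|b_1|_\infty^{2/3}$ for the boundary counterpart. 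For the terms against $\lambda^2\theta^2\varphi^2$ one writes $\lambda^2\varphi^2=\lambda^{-1}\varphi^{-1}(\lambda^3\varphi^3)\le CT^2\lambda^{-1}(\lambda^3\varphi^3)$, so they are absorbed once $\lambda\ge CT^2(|a_2|_\infty^2+|B|_\infty^2)$ in the interior and $\lambda\ge CT^2(|b_2|_\infty^2+|B_\Gamma|_\infty^2)$ on the boundary. Combining these requirements with the standing condition $\lambda\ge C(T+T^2)$ inherited from Theorem \ref{thm3.1} gives exactly $\lambda\ge\lambda_1$, and moving the absorbed integrals to the left-hand side yields \eqref{1.4}.

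There is no real analytic difficulty here, since the estimate is a straightforward corollary of Theorem \ref{thm3.1}; the only point requiring care — and the one that determines the precise form of $\lambda_1$ — is the bookkeeping of the powers of $T$, $\lambda$ and $\varphi$ in the absorption. The dichotomy to keep in mind is that the purely zero-order coefficients $a_1,b_1$ multiply the bare weight and hence, after being traded against $\lambda^3\theta^2\varphi^3$, cost only the power $|\cdot|_\infty^{2/3}$, whereas the convection and noise coefficients $a_2,B,b_2,B_\Gamma$ enter \eqref{car3.6} against $\lambda^2\theta^2\varphi^2$ — this extra $\lambda^2\varphi^2$ being exactly the price that \eqref{car3.6} charges for the weak-divergence and Itô-correction terms — and therefore, after trading only one factor of $\lambda\varphi$, cost the full quadratic power $|\cdot|_\infty^{2}$. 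Together with the observation that the weight $\varphi$ restricted to $\Gamma$ obeys the same $T^{-2}$ lower bound, this accounting produces the stated value of $\lambda_1$.
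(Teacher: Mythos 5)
Your proposal is correct and takes essentially the same route as the paper: one specializes the Carleman estimate \eqref{car3.6} of Theorem \ref{thm3.1} with $F_0=-a_1z$, $F_1=-a_2z$, $F=zB$, $F_{0,\Gamma}=-b_1z_\Gamma$, $F_{1,\Gamma}=-b_2z_\Gamma$, $F_\Gamma=z_\Gamma B_\Gamma$ (noting $-F\cdot\nu=-z_\Gamma B\cdot\nu$ on $\Sigma$) and then absorbs the resulting zero-order terms into the left-hand side for $\lambda\geq CT^2\big(\vert a_1\vert_\infty^{2/3}+\vert a_2\vert_\infty^{2}+\vert B\vert_\infty^2+\vert b_1\vert_\infty^{2/3}+\vert b_2\vert_\infty^2+\vert B_\Gamma\vert_\infty^2\big)$ using $\varphi\geq CT^{-2}$. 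Your explicit $T^6$ versus $T^2$ bookkeeping is precisely what underlies the paper's terser absorption step leading to \eqref{3.3101} and the stated $\lambda_1$.
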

\begin{proof} By choosing coefficients in the Carleman estimate \eqref{car3.6} as follows:
\begin{align*}
&\,F_0=-a_1z,\qquad F_1=-a_2z,\qquad F=zB,\\ &F_{0,\Gamma}=-b_1z_\Gamma,\qquad F_{1,\Gamma}=-b_2z_\Gamma,\qquad F_\Gamma=z_\Gamma B_\Gamma,
\end{align*}
we obtain that
\begin{align}
    \begin{aligned}
&\,\lambda^3\mathbb{E}\int_Q \theta^2\varphi^3z^2\,dxdt + \lambda^3\mathbb{E}\int_\Sigma \theta^2\varphi^3z_\Gamma^2\,d\sigma dt\\
&+\lambda\mathbb{E}\int_Q \theta^2\varphi \vert\nabla z\vert^2\,dxdt+ \lambda\mathbb{E}\int_\Sigma \theta^2\varphi \vert\nabla_\Gamma z_\Gamma\vert^2 \,d\sigma dt\\
&\leq C\lambda^3\mathbb{E}\int_{Q_0} \theta^2\varphi^3 z^2 \,dxdt + C\mathbb{E}\int_Q \theta^2 \vert a_1z\vert^2 \,dxdt\\
& \quad\,+C\lambda^2\mathbb{E}\int_Q \theta^2\varphi^2\vert a_2z\vert^2 \,dxdt+C\lambda^2\mathbb{E}\int_Q \theta^2\varphi^2 \vert zB\vert^2 \,dxdt\\
  &\label{3.3001}
\quad\,+C\mathbb{E}\int_\Sigma \theta^2 \vert b_1z_\Gamma\vert^2\,d\sigma dt+C\lambda^2\mathbb{E}\int_\Sigma\theta^2\varphi^2\vert b_2z_\Gamma\vert^2 \,d\sigma dt\\
&\quad\,+C\lambda^2\mathbb{E}\int_\Sigma\theta^2\varphi^2 \vert z_\Gamma B_\Gamma \vert^2 \,d\sigma dt,
  \end{aligned}
\end{align}
for any $\lambda\geq C(T+T^2)$. Now, note that it is sufficient to choose $$\lambda\geq CT^2 \,(\vert a_1\vert_\infty^{2/3}+\vert a_2\vert_\infty^{2}+\vert B\vert_\infty^2+\vert b_1\vert_\infty^{2/3}+\vert b_2\vert_\infty^2+\vert B_\Gamma\vert_\infty^2),$$		
to get that
\begin{align}
    \begin{aligned}
&\,\,C\mathbb{E}\int_Q \theta^2 \vert a_1z\vert^2 \,dxdt +C\lambda^2\mathbb{E}\int_Q \theta^2\varphi^2\vert a_2z\vert^2 \,dxdt\\
 &+C\lambda^2\mathbb{E}\int_Q \theta^2\varphi^2 \vert Bz\vert^2 \,dxdt+C\mathbb{E}\int_\Sigma \theta^2 \vert b_1z_\Gamma\vert^2\,d\sigma dt\\
&+C\lambda^2\mathbb{E}\int_\Sigma\theta^2\varphi^2\vert b_2z_\Gamma\vert^2 \,d\sigma dt+C\lambda^2\mathbb{E}\int_\Sigma\theta^2\varphi^2 \vert B_\Gamma z_\Gamma\vert^2 d\sigma\\
&\label{3.3101}
\leq \frac{1}{2}\lambda^3\mathbb{E}\int_Q \theta^2\varphi^3z^2 \,dxdt+\frac{1}{2}\lambda^3\mathbb{E}\int_\Sigma \theta^2\varphi^3z_\Gamma^2 \,d\sigma dt.
\end{aligned}
\end{align}
Finally, combining \eqref{3.3001} and \eqref{3.3101}, we deduce the desired Carleman estimate \eqref{1.4} for any $\lambda\geq\lambda_1$ which is defined by
$$\lambda_1=C\big[T+T^2+T^2(\vert a_1\vert_\infty^{2/3}+\vert a_2\vert_\infty^{2}+\vert B\vert_\infty^2+\vert b_1\vert_\infty^{2/3}+\vert b_2\vert_\infty^2+\vert B_\Gamma\vert_\infty^2)\big].$$
\end{proof}

Now, we are ready to prove the observability inequality of \eqref{1.012}.
\begin{proof}[Proof of Theorem \ref{thm1.2}]
From Carleman estimate \eqref{1.4}, we deduce
\begin{align}\label{5.2}
\begin{aligned}		    
&\,\mathbb{E}\int_{T/4}^{3T/4}\int_G \theta^2\varphi^3 z^2(t,x) \,dxdt + \mathbb{E}\int_{T/4}^{3T/4}\int_\Gamma \theta^2\varphi^3 z_\Gamma^2(t,x) \,d\sigma dt\\
&\leq C \, \mathbb{E}\int_{Q_0} \theta^2\varphi^3 z^2(t,x) \,dxdt,
\end{aligned}
\end{align}
 for any $\lambda\geq\lambda_1$. It is also easy to see that for $(t,x)\in (T/4,3T/4)\times\widetilde{G}$
\begin{equation}\label{4.301}
    e^{2\lambda_1\alpha}\varphi^3 \geq CT^{-6}e^{-2C\big(1+\frac{1}{T}+\vert a_1\vert_\infty^{2/3}+\vert a_2\vert_\infty^{2}+\vert B\vert_\infty^2+\vert b_1\vert_\infty^{2/3}+\vert b_2\vert_\infty^2+\vert B_\Gamma\vert_\infty^2\big)},
\end{equation}
and for $(t,x)\in(0,T)\times G$
\begin{equation}\label{4.401}
    e^{2\lambda_1\alpha}\varphi^3 \leq CT^{-6}e^{-C\big(1+\frac{1}{T}+\vert a_1\vert_\infty^{2/3}+\vert a_2\vert_\infty^{2}+\vert B\vert_\infty^2+\vert b_1\vert_\infty^{2/3}+\vert b_2\vert_\infty^2+\vert B_\Gamma\vert_\infty^2\big)}.
\end{equation}
Fix $\lambda=\lambda_1$ and combining \eqref{5.2}, \eqref{4.301} and \eqref{4.401}, we conclude that
 \begin{align}\label{4.5012}
 \begin{aligned}
     &\,\mathbb{E}\int_{T/4}^{3T/4}\int_G z^2(t,x) \,dxdt + \mathbb{E}\int_{T/4}^{3T/4}\int_\Gamma z_\Gamma^2(t,x) \,d\sigma dt\\
     &\leq e^{C\big(1+\frac{1}{T}+r_1\big)} \, \mathbb{E}\int_{Q_0} z^2(t,x) \,dxdt,
      \end{aligned}
 \end{align}
 where $r_1=\vert a_1\vert_\infty^{2/3}+\vert a_2\vert_\infty^{2}+\vert B\vert_\infty^2+\vert b_1\vert_\infty^{2/3}+\vert b_2\vert_\infty^2+\vert B_\Gamma\vert_\infty^2$.
 
On the other hand, let $t\in[0,T]$, by Itô's formula we compute $d_s\langle(z,z_\Gamma),(z,z_\Gamma)\rangle_{\mathbb{L}^2}$, integrating the equality on $[t,T]$ and taking the expectation, we obtain that
\begin{align}
    \begin{aligned}
&\,\mathbb{E}\int_G  z^2(T,x) dx + \mathbb{E}\int_\Gamma  z_\Gamma^2(T,x) d\sigma -\left(\mathbb{E}\int_G  z^2(t,x) dx + \mathbb{E}\int_\Gamma  z_\Gamma^2(t,x) d\sigma\right) \\
&\leq -2\beta\mathbb{E}\int_t^T\int_G \vert\nabla z\vert^2dxds+C(\vert a_1\vert_\infty+\vert a_2\vert_\infty^2)\mathbb{E}\int_t^T\int_G z^2dxds\\
&\label{4.610}
\quad\,+C\vert B\vert_\infty\mathbb{E}\int_t^T\int_G \vert z\vert\vert\nabla z\vert dxds-2\beta\mathbb{E}\int_t^T\int_\Gamma \vert\nabla_\Gamma z_\Gamma\vert^2d\sigma ds\\
&\quad\,+C(\vert b_1\vert_\infty+\vert b_2\vert_\infty^2)\mathbb{E}\int_t^T\int_\Gamma z_\Gamma^2d\sigma ds+C\vert B_\Gamma\vert_\infty\mathbb{E}\int_t^T\int_\Gamma \vert z_\Gamma\vert\vert\nabla_\Gamma z_\Gamma\vert d\sigma ds.
    \end{aligned}
\end{align}
Applying Young's inequality on the right-hand side of \eqref{4.610}, one can absorb gradient terms in the third and sixth terms using respectively the first and fourth terms, then we deduce that
\begin{align*}
    \begin{aligned}
&\,\mathbb{E}\int_G  z^2(T,x) dx + \mathbb{E}\int_\Gamma  z_\Gamma^2(T,x) d\sigma\\
&\leq \mathbb{E}\int_G  z^2(t,x) dx + \mathbb{E}\int_\Gamma  z_\Gamma^2(t,x) d\sigma+Cr_2\,\Bigg[\mathbb{E}\int_t^T\int_G  z^2 dx ds+ \mathbb{E}\int_t^T\int_\Gamma  z_\Gamma^2 d\sigma ds\Bigg].
    \end{aligned}
\end{align*}
where $r_2=\vert a_1\vert_\infty+\vert a_2\vert_\infty^2+\vert B\vert_\infty^2+\vert b_1\vert_\infty+\vert b_2\vert_\infty^2+\vert B_\Gamma\vert_\infty^2$. Hence, by Gronwall inequality, it follows that for all $t\in[0,T]$
 \begin{equation}\label{4.810}
\mathbb{E}\int_G  z^2(T,x) dx + \mathbb{E}\int_\Gamma  z_\Gamma^2(T,x) d\sigma
\leq e^{CTr_2}\,\Bigg[\mathbb{E}\int_G  z^2(t,x) dx + \mathbb{E}\int_\Gamma  z_\Gamma^2(t,x) d\sigma\Bigg].
 \end{equation}
Finally, integrating \eqref{4.810} on $(T/4,3T/4)$ and combining the obtained inequality with \eqref{4.5012}, it is straightforward to obtain the observability estimate \eqref{1.3}. This concludes the proof of Theorem \ref{thm1.2}.
\end{proof}
\section{Proof of Theorem \ref{thm1.1}}\label{sec4}
This section is addressed to give the proof of our main result about null controllability of \eqref{1.1} i.e., Theorem \ref{thm1.1}.
\begin{proof}[Proof of Theorem \ref{thm1.1}]
Let us fix $\varepsilon>0$ and $(y_T,y_{\Gamma,T})\in L^2_{\mathcal{F}_T}(\Omega;\mathbb{L}^2)$ and consider the following optimal problem
\begin{equation}\label{4.90123}
    \inf\{J_\varepsilon(u)\,,\,\,\,u\in L^2_\mathcal{F}(0,T;L^2(G))\},
\end{equation}
where
\begin{align*}
\begin{aligned}
J_\varepsilon(u)=\frac{1}{2}\mathbb{E}\int_{Q_0} u^2 \,dxdt+\frac{1}{2\varepsilon}\mathbb{E}\int_G y^2(0) dx+\frac{1}{2\varepsilon}\mathbb{E}\int_\Gamma y_\Gamma^2(0) d\sigma,
\end{aligned}
\end{align*}
where $(y,y_\Gamma,Y,\widetilde{Y})$ is the solution of system \eqref{1.1} with the control $u$ and the terminal state $(y_T,y_{\Gamma,T})$. Firstly, it is easy to see that the functional $J_\varepsilon$ is well-defined, continuous, strictly convex and coercive, then the problem \eqref{4.90123} has a unique optimal solution $u_\varepsilon$. Combining the Euler equation (i.e., Fréchet derivative $J'_\varepsilon(u_\varepsilon)=0$) and the optimality system (see, e.g., \cite{lions1972some,liu2014global}), it is easy to see that $u_\varepsilon$ can be characterized as follows
\begin{equation}\label{4.2452}
u_\varepsilon=-\mathbbm{1}_{G_0}z_\varepsilon
\end{equation}
where $(z_\varepsilon,z_{\varepsilon,\Gamma})$ is the solution of the following forward stochastic parabolic equation
\begin{equation}\label{1.2}
{\small\begin{cases}
			\begin{array}{ll}
				dz_\varepsilon - \textnormal{div}(A\nabla z_\varepsilon) \,dt = (-a_1 z_\varepsilon+\textnormal{div}(z_\varepsilon B)) \,dt - a_2z_\varepsilon \,dW(t) &\textnormal{in}\,\,Q,\\
				dz_{\varepsilon,\Gamma}-\textnormal{div}_\Gamma(A_\Gamma \nabla_\Gamma z_{\varepsilon,\Gamma}) \,dt+\partial^A_\nu z_\varepsilon \,dt = (-b_1z_{\varepsilon,\Gamma}-z_{\varepsilon,\Gamma} B\cdot \nu+\textnormal{div}_\Gamma(z_{\varepsilon,\Gamma} B_\Gamma))\,dt\\
    \hspace{5.3cm}\;\;\;\, -b_2z_{\varepsilon,\Gamma} \,dW(t) &\textnormal{on}\,\,\Sigma,\\
				z_{\varepsilon,\Gamma}(t,x)=z_\varepsilon\vert_\Gamma(t,x) &\textnormal{on}\,\,\Sigma,\\
				(z_\varepsilon,z_{\varepsilon,\Gamma})\vert_{t=0}=(-\frac{1}{\varepsilon}y_{\varepsilon}(0),-\frac{1}{\varepsilon}y_{\varepsilon,\Gamma}(0)) &\textnormal{in}\,\,G\times\Gamma,
			\end{array}
		\end{cases}}
\end{equation}
where $(y_\varepsilon,y_{\varepsilon,\Gamma},Y_\varepsilon,\widetilde{Y}_\varepsilon)$ is the solution of \eqref{1.1} with the control $u_\varepsilon$ and the terminal state $(y_T,y_{\Gamma,T})$. Now, by using Itô's formula in Lemma \eqref{lm1.1}, we compute $d\langle(y_\varepsilon,y_{\varepsilon,\Gamma}),(z_\varepsilon,z_{\varepsilon,\Gamma})\rangle_{\mathbb{L}^2}$, integrating the result on $(0,T)$ and taking the expectation on both sides, we end up with
\begin{align*}
    \begin{aligned}
        &\,-\mathbb{E}\int_Q \mathbbm{1}_{G_0}u_\varepsilon z_\varepsilon \,dxdt+\frac{1}{\varepsilon}\mathbb{E}\int_G y^2_\varepsilon(0,\cdot) \,dx+\frac{1}{\varepsilon}\mathbb{E}\int_\Gamma y^2_{\varepsilon,\Gamma}(0,\cdot) \,d\sigma\\
        &=-\mathbb{E}\int_G y_T z_\varepsilon(T,\cdot)\,dx-\mathbb{E}\int_\Gamma y_{\Gamma,T}z_{\varepsilon,\Gamma}(T,\cdot)\,d\sigma.
    \end{aligned}
\end{align*}
By Cauchy-Schwarz and Young's inequalities, it follows that
\begin{align}\label{4.343}
    \begin{aligned}
        &\,-\mathbb{E}\int_Q \mathbbm{1}_{G_0}u_\varepsilon z_\varepsilon \,dxdt+\frac{1}{\varepsilon}\mathbb{E}\int_G y^2_\varepsilon(0,\cdot) \,dx+\frac{1}{\varepsilon}\mathbb{E}\int_\Gamma y^2_{\varepsilon,\Gamma}(0,\cdot) \,d\sigma\\
        &\leq \frac{e^{CK}}{2}\vert(y_T,y_{\Gamma,T})\vert_{L^2_{\mathcal{F}_T}(\Omega;\mathbb{L}^2)}^2+\frac{1}{2e^{CK}}\big(\vert z_\varepsilon(T,\cdot)\vert^2_{L^2_{\mathcal{F}_T}(\Omega;L^2(G))}+\vert z_{\varepsilon,\Gamma}(T,\cdot)\vert^2_{L^2_{\mathcal{F}_T}(\Omega;L^2(\Gamma))}\big),
    \end{aligned}
\end{align}
where $e^{CK}$ is the same constant appeared in \eqref{1.3}. Therefore, recalling \eqref{4.2452} and combining \eqref{4.343} and \eqref{1.3}, we conclude that
\begin{align}\label{4.120123}
    \begin{aligned}
        \vert u_\varepsilon\vert^2_{L^2(0,T;L^2(G))}+\frac{2}{\varepsilon}\vert(y_\varepsilon(0,\cdot),y_{\varepsilon,\Gamma}(0,\cdot))\vert^2_{L^2_{\mathcal{F}_0}(\Omega;\mathbb{L}^2)}
        \leq e^{CK}\,\vert(y_T,y_{\Gamma,T})\vert_{L^2_{\mathcal{F}_T}(\Omega;\mathbb{L}^2)}^2.
    \end{aligned}
\end{align}
From \eqref{4.120123}, one can extract a subsequence (denoted also by $u_\varepsilon$) of $u_\varepsilon$ such that 
$$u_\varepsilon \longrightarrow \hat{u},\,\,\,\textnormal{weakly in}\,\,\,\,L^2_\mathcal{F}(0,T;L^2(G)), \,\,\textnormal{as}\,\,\varepsilon\rightarrow0.$$
Let $(\hat{y},\hat{y}_\Gamma,\hat{Y},\hat{\widetilde{Y}})$ be the solution of \eqref{1.1} associated to the terminal state $(y_T,y_{\Gamma,T})$ and the control $\hat{u}$. By using again \eqref{4.120123}, we deduce $(\hat{y}(0,\cdot),\hat{y}_{\Gamma}(0,\cdot))=(0,0)$ in $G\times\Gamma$, $\mathbb{P}\textnormal{-a.s.}$, and the control $\hat{u}$ satisfies the desired estimate \eqref{1.21203}. This concludes the proof of Theorem \ref{thm1.1}.
\end{proof}

\end{document}